\newcommand{\dd}[1]{\,\mathrm{d}#1}
\newcommand{\pdv}[3][]{\frac{\partial^{#1} #2}{\partial #3^{#1}}}
\newcommand{\pdvs}[3][]{\partial^{#1}#2/\partial^{#1}#3}
\newtheorem{theorem}{Theorem}[section]
\newtheorem{lemma}[theorem]{Lemma}
\theoremstyle{definition}
\newtheorem{definition}[theorem]{Definition}
\theoremstyle{plain}
\newtheorem{proposition}[theorem]{Proposition}
\newtheorem{corollary}[theorem]{Corollary}
\newtheorem*{theorem*}{Theorem}
\theoremstyle{remark}
\newcommand{\D}{D}
\title[Regularity of viscosity solutions of the $\sigma_k$-Yamabe-Type Problem]{Regularity of viscosity solutions of the $\sigma_k$-Yamabe-type Problem for $k>n/2$}
\author{Wu Jinyang\, \orcidlink{0009-0002-6180-3619}}
\email{jwu124@uiowa.edu}
\address{Department of Mathematics,University of Iowa, Iowa City, IA 52240,	United States}
\date{July 3, 2024}
\begin{document}
\begin{abstract}
We study the regularity of Lipschitz viscosity solutions to the $\sigma_k$ Yamabe problem in the negative cone case. If either $k=n$ or the manifold is conformally flat and $k>n/2$, we prove that all Lipschitz viscosity solutions are smooth away from a closed set of measure zero. For the general $k>n/2$ case, under certain assumptions, we prove the existence of a Lipschitz viscosity solution that is smooth away from a closed set of measure zero. 
\end{abstract}

	\maketitle
\tableofcontents

\section{Introduction}
In this article, we study a fully nonlinear PDE in conformal geometry. First, we set up notations. 
Let $(M,g)$ be a smooth $n$-dimensional Riemannian manifold with $n \geq 3$. For $w \in C^\infty(M)$, let $g_w = e^{2w} g$ be a conformal metric in $[g]$. Let $A_{g_w}$ be the Schouten tensor of $(M,g_w)$:
\begin{equation*}
A_{g_w}= \frac{1}{n-2}\biggl(\text{Ric}_{g_w}-\frac{\text{Sc}_{g_w}}{2(n-1)}g_w \biggr),
\end{equation*}
where $\text{Ric}_{g_w}$ and $\text{Sc}_{g_w}$ denote the Ricci and scalar curvatures of $(M,g_w)$, respectively. Let $\lambda(g_w^{-1}A_{g_w})=(\lambda_1,\cdots, \lambda_n)$ be the set of eigenvalues of $g_w^{-1}A_{g_w}$. For $1 \leq k \leq n$, let $\sigma_k$ be the $k$-th elementary symmetric function 
\begin{equation*}
	\sigma_k(\lambda)= \sigma_k(\lambda_1,\cdots, \lambda_n)= \sum_{i_1 < \cdots < i_k}  \lambda_{i_1} \cdots \lambda_{i_k}.
\end{equation*}
Let $\Gamma^+_k$ be the positive $k$-cone
\begin{align*}
	\Gamma^{+}_k=\{\lambda=(\lambda_1,\cdots, \lambda_n); \sigma_1(\lambda) > 0, \cdots, \sigma_k(\lambda)>0\}.
\end{align*}

We consider the $\sigma_k$-Yamabe-type Problem in the negative cone case, namely 
\begin{equation}\label{sigmakproblem}\left\{
\begin{aligned}
	\sigma_k(\lambda(-g_w^{-1}A_{g_w}))&=R(x)>0,\\	\lambda(-g_w^{-1}A_{g_w}) & \in \Gamma_k^+,
\end{aligned}\right.
\end{equation}
where $R \in C^\infty(M)$, 
\begin{equation*}\left\{
	\begin{aligned}
	-g_w^{-1}A_{g_w}&= -g^{-1} \bigl(\frac{S(w,g)+A_g}{ e^{2 w}}\bigr),\\
		S(w,g) &= - \nabla^2_g w + d w \otimes d w -\frac{1}{2} |\nabla_g w|_g^2 g, 
	\end{aligned}\right.
\end{equation*}
and $\nabla_g$ is the covariant derivative with respect to the metric $g$. From the PDE's perspective, \eqref{sigmakproblem} is a fully nonlinear non-uniformly elliptic PDE. \eqref{sigmakproblem} is degenerate elliptic if we assume $\lambda(-g_w^{-1} A_{g_w}) \in \overline{\Gamma}_k^+$, instead of $\lambda(-g_w^{-1} A_{g_w}) \in \Gamma_k^+$.\par 

See  \autoref{sectionhistoricalresults} for some historical results of \eqref{sigmakproblem} and its variants. 

Viscosity solutions are a type of weak solution developed by {Crandall and Lions} in \cite{MR0690039}. See \cite{MR1351007} for some classical results on viscosity solutions of fully nonlinear uniformly elliptic PDEs. For viscosity solutions to \eqref{sigmakproblem}, we follow the definition of {Li, Nguyen, and Wang} in \cite{MR3813247}.

\begin{definition}\cite{MR3813247}\label{sigmakproblemviscositydefinitionelementary}
	$w \in C(M)$ is a viscosity subsolution to \eqref{sigmakproblem} if for all $x_0 \in M$, for all $\varphi \in C^2$, and for all neighborhoods $U$ of $x_0$ with 
	\begin{align*}
		\varphi(x_0)&= w(x_0) & \varphi \geq w \text{ in }U;
	\end{align*}
	there holds, 
	\begin{equation*}
		\begin{aligned}
			\lambda\bigl(-g_\varphi^{-1}A_{g_\varphi}\bigr)(x_0) &\in \Gamma_k^+ & &\text{ and } & \sigma_k\bigl[\lambda\bigl(-g_\varphi^{-1}A_{g_\varphi}\bigr)(x_0)\bigr] &\geq R(x_0).
		\end{aligned}
	\end{equation*} \par 
$w \in C(M)$ is a viscosity supersolution to \eqref{sigmakproblem} if for all $x_0 \in M$, for all $\varphi\in C^2$, and for all neighborhoods $U$ of $x_0$ with
	\begin{align*}
		\varphi(x_0)&= w(x_0) & \varphi \leq w \text{ in }U;
	\end{align*}
	either 
	\begin{equation*}
		\begin{aligned}
			\lambda\bigl(-g_\varphi^{-1}A_{g_\varphi}\bigr)(x_0) &\in \Gamma_k^+ & & \text{ and } &  \sigma_k\bigl[\lambda\bigl(-g_\varphi^{-1}A_{g_\varphi}\bigr)(x_0)\bigr] &\leq R(x_0)
		\end{aligned}
	\end{equation*}
or 
\begin{equation*}
	\lambda\bigl(-g_\varphi^{-1}A_{g_\varphi}\bigr)(x_0) \in \mathbb{R}^n-\Gamma_k^+.
\end{equation*}\par 
$w \in C(M)$ is a viscosity solution to \eqref{sigmakproblem} if it is both a viscosity subsolution and a viscosity supersolution to \eqref{sigmakproblem}.
\end{definition}

At the time of writing this paper, there are two approaches to construct viscosity solutions to \eqref{sigmakproblem}. The first approach is the Perron method. So far the Perron method works only on conformally flat manifolds. See \cites{MR3852835,MR4611402} for more details. The second approach is an elliptic regularization
method, where the viscosity solution is obtained as the limit of a sequence of smooth functions. See \cites{MR4210287,MR4956437} for more details. In \autoref{themaintheorem} and \autoref{corollary2}, we prove the same regularity result for viscosity solutions obtained via the Perron method and the elliptic regularization method, respectively, for $k > n/2$.

\subsection{Regularity of General Lipschitz Viscosity Solutions}
Our first main result is the following. 
\begin{theorem}\label{themaintheorem}
	Let $(M,g)$ be an $n$-dimensional Riemannian manifold with $n \geq 3$. Let $0 <R \in C^\infty(M)$ and $w$ be a locally Lipschitz viscosity solution to \eqref{sigmakproblem}. If either 
	\begin{enumerate}
	\item \label{themaintheorempt1} $(M,g)$ is
	conformally flat and $k > n/2$, or 
	\item \label{themaintheorempt2} $k=n$,
	\end{enumerate}
	 then there exists a closed zero measure set $S \subset M$ such that $w \in C^\infty (M\setminus S)$.
\end{theorem}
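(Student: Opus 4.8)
\emph{Strategy.} The proof rests on a one–sided Hessian (semi-convexity) estimate special to the negative cone when $k>n/2$, together with Alexandrov's theorem and the interior regularity theory for concave fully nonlinear elliptic equations. Fix a point and work in local coordinates, so that \eqref{sigmakproblem} becomes a scalar equation $\sigma_k(\lambda(M[w]))=R$, $\lambda(M[w])\in\Gamma_k^+$, with $M[w]=-g_w^{-1}A_{g_w}=e^{-2w}\bigl(\nabla^2_g w+Q(\nabla_g w,w,x)\bigr)$ and $Q$ smooth; in the conformally flat case one may also set $u=e^{-w}>0$, turning this into $\sigma_k(\lambda(-u\nabla^2 u+\tfrac12|\nabla u|^2 I))=R$, which is convenient later. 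The decisive structural fact is the algebraic inequality valid precisely for $2k>n$: if $\lambda\in\Gamma_k^+$ then $(n-1)\lambda_i+\sum_{j\ne i}\lambda_j>0$ for every $i$, i.e. $\operatorname{Ric}_{g_w}$ is negative definite. Since the linear map $h\mapsto(n-2)h+\operatorname{tr}(h)\,g$ on symmetric $2$-tensors is positive (hence has positive inverse), expressing $-\operatorname{Ric}_{g_\varphi}>0$ in coordinates in terms of $\nabla^2_g\varphi$ shows that for any $C^2$ function $\varphi$ with $\varphi(x_0)=w(x_0)$ the admissibility $\lambda(M[\varphi])(x_0)\in\Gamma_k^+$ forces $\nabla^2_g\varphi(x_0)\ge -C$, with $C$ depending only on the local Lipschitz bound of $w$ and the background geometry; for $k=n$ this is immediate because $M[\varphi]>0$. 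Testing $w$ from above by all such $\varphi$ and invoking the subsolution half of \autoref{sigmakproblemviscositydefinitionelementary}, we conclude that $w+\tfrac C2|x|^2$ is convex in the viscosity sense, hence convex: $w$ is locally semi-convex.

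\emph{Step 2 (the equation off a null set, and non-degeneracy).} By Alexandrov's theorem a locally semi-convex function is twice differentiable off a Lebesgue-null set $N$. For $x_0\notin N$ the polynomials $P^{\pm}_{\epsilon}(x)=w(x_0)+\nabla w(x_0)\cdot(x-x_0)+\tfrac12\bigl(\nabla^2 w(x_0)\pm\epsilon I\bigr)(x-x_0)\cdot(x-x_0)$ touch $w$ from above $(+)$ and from below $(-)$ at $x_0$ for all small $\epsilon>0$; plugging them into the sub- and supersolution inequalities and letting $\epsilon\downarrow 0$ yields $\lambda(M[w])(x_0)\in\overline{\Gamma_k^+}$ with $\sigma_k(\lambda(M[w])(x_0))\ge R(x_0)$, together with the alternative that either $\lambda(M[w])(x_0)\notin\Gamma_k^+$ or $\sigma_k(\lambda(M[w])(x_0))\le R(x_0)$. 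The Newton--Maclaurin inequalities force $\sigma_k\equiv 0$ on $\partial\Gamma_k^+$, whereas $R>0$, so $\lambda(M[w])(x_0)$ cannot lie on $\partial\Gamma_k^+$; therefore $\lambda(M[w])(x_0)\in\Gamma_k^+$ (the open cone) and $\sigma_k(\lambda(M[w])(x_0))=R(x_0)$ for every $x_0\notin N$. Thus $w$ is a strong solution that is non-degenerate almost everywhere.

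\emph{Step 3 (upgrading to smoothness on an open set of full measure).} Let $S\subset M$ be the complement of the maximal open set on which $w\in C^{\infty}$; then $S$ is closed and it remains to prove $|S|=0$. On any open set where $\lambda(M[w])$ stays in a compact subset of $\Gamma_k^+$ the equation is uniformly elliptic and $\sigma_k^{1/k}$ is concave, so Evans--Krylov and Schauder give interior $C^{2,\alpha}$ bounds and then $C^{\infty}$ by bootstrap; for $k=n$ this is Caffarelli's interior regularity for convex solutions of Monge--Amp\`ere-type equations, and for $2\le k<n$ (where the conformal flatness hypothesis enters) the analogous regularity theory for $k$-admissible Hessian-type equations in the $u$-variable. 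The genuine difficulty is that Step 2 only provides twice differentiability almost everywhere, so the ellipticity constants could a priori degenerate along a sequence approaching $S$, and one must exclude $|S|>0$. The plan is to identify $S$ with the set where $w+\tfrac C2|x|^2$ fails to be locally strictly convex (for $k=n$), respectively the set where the $k$-admissible structure degenerates; to show this set has empty interior; and to prove it is Lebesgue-null — for $k=n$ via Caffarelli's localization principle, by which a face on which $w+\tfrac C2|x|^2$ is affine must propagate to the boundary and hence cannot sit in the interior, and in general by a blow-up at a point of positive Lebesgue density of $S$, producing an entire Lipschitz viscosity solution in the negative cone whose singular set has full density at the origin, contradicting a Liouville-type classification of such entire solutions. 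Together with Steps 1--2 this yields $w\in C^{\infty}(M\setminus S)$ with $S$ closed and $|S|=0$.

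\emph{Main obstacle.} Steps 1 and 2 are comparatively soft; the crux is Step 3, namely passing from almost-everywhere non-degeneracy to smoothness on an honest open set of full measure, which requires controlling the measure-zero degeneration set by a localization or blow-up argument rather than by the a.e.\ information alone.
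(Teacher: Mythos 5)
Your overall outline (twice differentiability a.e.\ in Step~1--2, then upgrade to an open set in Step~3) tracks the paper's strategy, but both halves contain genuine gaps, and the second half is where the paper's actual argument lives.

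\textbf{Step 1 is incorrect for $n/2<k<n$.} You claim that $\lambda(M[\varphi])(x_0)\in\Gamma_k^+$ together with the Lipschitz bound forces $\nabla^2_g\varphi(x_0)\ge -C\,I$, via positivity of the map $T(h)=(n-2)h+\operatorname{tr}(h)\,g$ ``hence a positive inverse.'' The first half of the Ricci observation is fine: for $\lambda\in\Gamma_k^+$ with $k\ge n/2$ one indeed has $(n-2)\lambda_i+\sigma_1(\lambda)>0$ for every $i$. But $T^{-1}(k)=\tfrac{1}{n-2}k-\tfrac{\operatorname{tr}(k)}{2(n-1)(n-2)}g$ does \emph{not} map positive matrices to positive matrices: take $k=\operatorname{diag}(1,\epsilon,\dots,\epsilon)$ with $\epsilon$ small and you get a negative eigenvalue. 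Equivalently, membership in $\Gamma_k^+$ only gives $\lambda_{\min}>-\sigma_1/(n-2)$, and nothing in the subsolution definition bounds $\sigma_1$ of the touching paraboloid --- you can always add $\epsilon|x-x_0|^2$ and still touch from above. So the Ricci argument yields no fixed lower Hessian bound, and hence no semi-convexity, for $k<n$. For $k=n$ it does work (there $\Gamma_n^+$ is the positive orthant and $W M W>0$ gives $M>0$ directly). This is exactly why the paper does \emph{not} obtain convexity for $n/2<k<n$; it only obtains viscosity $k$-convexity of $w+C|x|^2$ (Theorem~\ref{2convexity}) and then invokes Chaudhuri--Trudinger's extension of Alexandrov's theorem (Theorem~\ref{kconvexalexdandrofftype}), which is the genuine ingredient you are missing. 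It also explains the conformal flatness hypothesis: $\Gamma_k^+$ for $k<n$ is not invariant under congruence $N\mapsto W\!N\!W$ with $W$ an arbitrary positive symmetric matrix, so one needs $W=\delta_{ij}$, i.e.\ flat background, to carry the cone membership from $W(\nabla^2\varphi+2CI)W$ to $\nabla^2\varphi+2CI$. If your Step~1 were correct it would dispense with conformal flatness altogether, which should have been a warning sign.

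\textbf{Step 3 is not an argument; it is a wish list, and its key ingredients do not exist.} You correctly identify the crux --- a.e.\ non-degeneracy does not give uniform ellipticity on an open set --- and then propose to deal with it via ``Caffarelli's localization principle'' and a ``Liouville-type classification of entire solutions.'' Neither applies: Caffarelli's localization and the propagation-of-extremal-sets theory are specific to Monge--Amp\`ere and its affine invariance, which $\sigma_k(\lambda(-g_w^{-1}A_{g_w}))$ does not have (note the zeroth- and first-order terms and the $e^{2w}$ weight); and there is no Liouville theorem for entire Lipschitz viscosity solutions to \eqref{sigmakproblem} in the negative cone --- in fact the Li--Nguyen annulus examples cited in the paper show Lipschitz viscosity solutions with an $(n-1)$-dimensional singular set, which makes a Liouville result at the level you want quite delicate. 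The paper's actual mechanism for closing this gap is entirely different and is the technical heart of the proof: at any point of punctual second-order differentiability, the inverse function theorem on rescaled balls (Theorem~\ref{inversemappingsigma2}) produces a genuine $C^{2,\alpha}$ local solution $w^{(r)}$ with the same second-order jet, and then Savin's small-perturbation regularity (Theorem~\ref{Savinisgod}) applied to $r^{-2}(w_r-w^{(r)})$ and the operator $F^r_{w^{(r)}}$ yields $C^{2,\alpha}$ of $w$ in a full neighborhood (Theorem~\ref{C2alpha}); smoothness then follows by Schauder bootstrap, and $S$ is closed because its complement is open by construction. No blow-up, no Liouville, no concavity of $\sigma_k^{1/k}$ is used (the paper stresses this explicitly). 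You would need to either reproduce this Savin-plus-inverse-function-theorem construction or supply a proof of the Liouville statement you invoked; as written, the plan does not close.
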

 \autoref{themaintheorem} is the first regularity result regarding viscosity solutions to \eqref{sigmakproblem}, and is a natural step in studying higher regularity of solutions following the existence results for viscosity solutions in \cites{MR3852835,MR4611402,MR4210287,MR4956437}.
In \autoref{themaintheorem}, we make no assumptions on the topology of $M$. The proof of \autoref{themaintheorem} is presented in \autoref{localregularitysection}, as a consequence of \autoref{maintheorempdeside}. We use {Chaudhuri and Trudinger}'s extension of the Alexandroff-Buselman-Feller Theorem \cite{MR2133413} to show that if either \eqref{themaintheorempt1} in \autoref{themaintheorem} or \eqref{themaintheorempt2} in \autoref{themaintheorem} is satisfied, then all Lipschitz viscosity solutions to \eqref{sigmakproblem} are punctually second-order differentiable in $M-S$, where $S$ is a measure zero subset. Next, we use the inverse function theorem to construct local smooth solutions to \eqref{sigmakproblem} in small neighborhoods of points in $M-S$. Finally, we show that about each point in $M-S$, there is a local smooth solution such that {Savin}'s regularity theory of small perturbation of viscosity solutions \cite{MR2334822} applies to a PDE defined by the local smooth solution. When $w$ is a viscosity solution to \eqref{sigmakproblem} on $(M,g)$ and neither \eqref{themaintheorempt1} in \autoref{themaintheorem} nor \eqref{themaintheorempt2} in \autoref{themaintheorem} is satisfied, our result \autoref{C2alpha} shows that the singular set \begin{equation}\label{singularsetclosed}
S=	\bigl\{ x_0 \in M;  w \text{ is not punctually second-order differentiable at } x_0 \bigr\}
\end{equation}
is closed. However, we have no further control on the size of the singular set $S$. 

It is well known that $\sigma_k^{1/k}$ is concave on $\Gamma_k^+$, see \cites{MR113978,MR1976082} for a reference. Our argument is independent of the concavity of $\sigma_k^{1/k}$ and applies to a wider class of PDE, see \autoref{C2alpha}. \par
 In \cite{MR4210287}, {Li and Nguyen} compute some explicit examples of viscosity solutions to \eqref{sigmakproblem} on annular domains $\{a < |x| < b\} \subset\mathbb{R}^n$ with the canonical Euclidean metric for $R=1$ and $2 \leq k \leq n$. These viscosity solutions are radial, singular on the level set $\{|x|=\sqrt{ab}\}$, and smooth elsewhere. These examples show that our result, \autoref{themaintheorem}, is optimal on Euclidean domains. \par  
By \cite[Theorem 3.5]{MR3852835}, if $(M,g)$ is conformally flat with $n \geq 3$ and $R=1$, then all viscosity solutions $w$ to \eqref{sigmakproblem} are locally Lipschitz. Hence, we deduce \autoref{corollary1} from \autoref{themaintheorem}. 
\begin{corollary}\label{corollary1}
	Let $(M,g)$ be a conformally flat manifold with $n \geq 3$. Let $R=1$, and $k > n/2$. If $w$ is a viscosity solution to \eqref{sigmakproblem}, then there exists a closed zero measure set $S \subset M$ such that $w \in C^\infty (M\setminus S)$.
\end{corollary}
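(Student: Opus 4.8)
The plan is to combine the a priori Lipschitz bound available in the conformally flat, $R\equiv 1$ setting with the generic smoothness statement of \autoref{themaintheorem}; the corollary is essentially an assembly, with all analytic content delegated to those two inputs. So let $w$ be an arbitrary viscosity solution to \eqref{sigmakproblem} in the sense of \autoref{sigmakproblemviscositydefinitionelementary}, under the standing hypotheses $n\geq 3$, $(M,g)$ conformally flat, $R\equiv 1$, and $k>n/2$.

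First I would invoke \cite{MR3852835}*{Theorem 3.5}: under exactly these hypotheses every viscosity solution to \eqref{sigmakproblem} is locally Lipschitz on $M$. This is the only ingredient not already proved in the present paper, and it is what promotes the merely continuous $w$ of \autoref{sigmakproblemviscositydefinitionelementary} to a locally Lipschitz function. One small point to verify is that the notion of viscosity solution used in \cite{MR3852835} coincides with \autoref{sigmakproblemviscositydefinitionelementary} — both follow Li--Nguyen--Wang — so that the cited estimate applies verbatim. With $w$ now known to be a locally Lipschitz viscosity solution, and with $(M,g)$ conformally flat and $k>n/2$, hypothesis \eqref{themaintheorempt1} in \autoref{themaintheorem} is satisfied, so \autoref{themaintheorem} applies and yields a closed set $S\subset M$ of measure zero with $w\in C^\infty(M\setminus S)$. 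That is exactly the assertion of the corollary.

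There is no obstacle internal to the corollary: the Alexandrov-type punctual second differentiability off a null set, the inverse function theorem construction of local smooth solutions, and the application of Savin's small-perturbation regularity theory are all packaged inside \autoref{themaintheorem}, and the gradient estimate is imported from \cite{MR3852835}. The only thing requiring attention is the compatibility of the two viscosity-solution conventions just mentioned; once that is settled the proof is a one-line deduction. As in \autoref{themaintheorem}, no hypotheses on the topology of $M$ are needed.
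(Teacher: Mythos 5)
Your proposal is correct and follows the paper's argument exactly: cite \cite{MR3852835}*{Theorem 3.5} to upgrade an arbitrary viscosity solution to a locally Lipschitz one in the conformally flat, $R\equiv 1$ setting, then apply \autoref{themaintheorem}\eqref{themaintheorempt1}. Nothing more is needed.
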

Let $(M,g)$ be a Riemannian manifold with boundary $\partial M$. The boundary value problem,
\begin{equation}\left\{
	\begin{aligned}
		\sigma_k(\lambda(-g_w^{-1}A_{g_w}))&=R(x)>0,\\	\lambda(-g_w^{-1}A_{g_w}) & \in \Gamma_k^+,\\
		w(x) &\to \infty \qquad \text{as}\qquad d(x,\partial M) \to 0, \label{sigmakboundarytoinfty}
	\end{aligned}\right.
\end{equation}
where $d(\cdot,\partial M)$ is the distance to the boundary with respect to the metric $g$, is known as the $\sigma_k$-Loewner–Nirenberg problem. \par 
By \autocites[][]{MR3852835}[][Theorem 1.3]{MR4611402}, on a bounded smooth domain, $\Omega \subset \mathbb{R}^n$ with the canonical Euclidean metric, \eqref{sigmakboundarytoinfty} has a unique viscosity solution that is smooth near the boundary $\partial \Omega$, hence we deduce \autoref{corollary1.5} from \autoref{corollary1}.
\begin{corollary}\label{corollary1.5}
Let $\Omega \subset \mathbb{R}^n$ be a bounded smooth domain equipped with the canonical Euclidean metric. If $k > n/2$, $n\geq 3$, and $R=1$, then there exists a closed zero measure set $S \subset \Omega$ such that the unique viscosity solution $w$ to \eqref{sigmakboundarytoinfty} satisfies $w \in C^\infty (\Omega\setminus S)$.
\end{corollary}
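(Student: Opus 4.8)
The plan is to obtain \autoref{corollary1.5} as a direct consequence of \autoref{corollary1}, combined with the existence and boundary-regularity results of \citelist{\cite{MR3852835}\cite{MR4611402}} for the $\sigma_k$-Loewner--Nirenberg problem. Write $M=\overline{\Omega}$, so that the interior of $M$ is the domain $\Omega$ carrying the canonical Euclidean metric $\delta$. The first observation is that $(\Omega,\delta)$ is a conformally flat Riemannian manifold with $n\geq 3$ --- it is flat, hence trivially locally conformally flat --- and that the unique viscosity solution $w$ to \eqref{sigmakboundarytoinfty}, restricted to $\Omega$, is a viscosity solution to \eqref{sigmakproblem} on $(\Omega,\delta)$ with $R\equiv 1$: by \autoref{sigmakproblemviscositydefinitionelementary} the sub/supersolution conditions only test $w$ against $C^2$ functions at interior points, so the boundary blow-up condition in \eqref{sigmakboundarytoinfty} is irrelevant to the interior equation, while $w\in C(\Omega)$ holds by hypothesis.

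Next I would apply \autoref{corollary1} to $w|_\Omega$ on $(\Omega,\delta)$: since $\Omega$ is conformally flat, $n\geq 3$, $R=1$, and $k>n/2$, there is a set $S_0\subset\Omega$, closed in $\Omega$ and of Lebesgue measure zero, with $w\in C^\infty(\Omega\setminus S_0)$. Note that the inputs used to prove \autoref{corollary1} --- the local Lipschitz estimate of \cite{MR3852835}*{Theorem 3.5} and case \eqref{themaintheorempt1} of \autoref{themaintheorem} --- are purely local statements or hold on noncompact manifolds, so they apply on $\Omega$ without change even though $w$ is unbounded near $\partial M$.

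It remains to push the singular set off the boundary. By \citelist{\cite{MR3852835}\cite{MR4611402}*{Theorem 1.3}}, $w$ is smooth near $\partial M$, i.e.\ there is an open set $N\subset M$ with $\partial M\subset N$ and $w\in C^\infty(N\cap\Omega)$. Set $S:=S_0\setminus N$; then $S$ has measure zero and is closed in $\Omega$ (as $S_0$ is closed in $\Omega$ and $N$ is open). Since $N$ is an open neighborhood of $\partial M$ disjoint from $S$, the closure of $S$ in $\mathbb{R}^n$ is a closed subset of the compact set $\overline{\Omega}$ that misses $\partial M$, hence a compact subset of $\Omega$; as $S$ is closed in $\Omega$ this forces $S$ to equal that compact set, so $S$ is in particular closed in $M$. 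Finally, for $x\in\Omega\setminus S$ we have either $x\notin S_0$, in which case $w$ is smooth at $x$ by the previous paragraph, or $x\in N$, in which case $w$ is smooth at $x$ by the boundary regularity; hence $w\in C^\infty(M\setminus S)$, as claimed. The deduction is essentially bookkeeping about where the relevant sets are closed; the one genuinely external ingredient, needed precisely to keep $S$ a closed subset of the manifold-with-boundary $M$, is the smoothness of $w$ near $\partial M$ supplied by \citelist{\cite{MR3852835}\cite{MR4611402}*{Theorem 1.3}}.
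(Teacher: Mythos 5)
Your proposal is correct and matches the paper's own derivation: apply \autoref{corollary1} on the conformally flat interior $\Omega$, then use the boundary smoothness supplied by \citelist{\cite{MR3852835}\cite{MR4611402}*{Theorem 1.3}} to excise a neighborhood $N$ of $\partial M$ from the singular set. The paper only asserts this deduction in a single clause; your explicit point-set argument that $S=S_0\setminus N$ is closed in $M$ (not merely in $\Omega$) --- via compactness of $\overline{\Omega}$ and the fact that $\overline{S}$ misses $\partial M$ --- is exactly the bookkeeping the paper leaves to the reader.
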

Note that the viscosity solution in \cite[Theorem 1.3]{MR4611402} is constructed using the Perron method. \par 
\subsection{Regularity of Viscosity Solutions via the Elliptic Regularization Method}
In \cite[Theorem 1.3]{MR4210287}, {Li and Nguyen} show that for any closed Riemannian manifold $(M,g)$ with $\lambda(-g^{-1}A_g) \in \Gamma_k^+$ and $R=1$, there exists a Lipschitz viscosity solution $w$ to \eqref{sigmakproblem}. The proof in \cite{MR4210287} in fact works for all $0 < R \in C^\infty(M)$. The Lipschitz viscosity solution $w$ in  \cite[Theorem 1.3]{MR4210287} is the limit of a sequence of smooth functions $w_\tau$ as $\tau\to 1$. For $0 \leq  \tau <1$, $w_\tau$ is a smooth solution to the following modified $\sigma_k$-Yamabe-type problem :
\begin{align*}
A_{g_{w_\tau}}^{\tau} =\frac{1}{n-2}\left(\text{Ric}_{g_{w_\tau}}-\tau\cdot\frac{\text{Sc}_{g_{w_\tau}}}{2(n-1)}g_{w_\tau}\right),\\
\end{align*}
and
\begin{equation}\label{modifiedschouten}
\left\{ \begin{aligned}\sigma_{k}(\lambda(-g_{w_{\tau}}^{-1}A_{g_{w_{\tau}}}^{\tau})) & =R(x)>0,\\
	\lambda(-g_{w_{\tau}}^{-1}A_{g_{w_{\tau}}}^{\tau}) & \in\Gamma_{k}^{+}.
\end{aligned}
\right.
\end{equation}
Moreover, $w_\tau$ has uniform $C^0$ and $C^1$ estimates independent of $\tau$. By \autoref{punctuallydifferentiablealmosteverysmoothapproximation}, the viscosity solution $w$ is punctually second-order differentiable almost everywhere when $k > n/2$. Hence, with \autoref{C2alpha}, which shows the singular set \eqref{singularsetclosed} is closed, we deduce our second main result, \autoref{corollary2}. The proof of \autoref{corollary2} is presented in \autoref{localregularitysection}.
\begin{theorem}\label{corollary2}
Let $(M,g)$ be an $n$-dimensional closed Riemannian manifold with $n\geq 3$ and $\lambda(-g^{-1}A_g) \in \Gamma_k^+$. Let $0 < R \in C^\infty(M)$ and $k >n/2$. Then there exists a Lipschitz viscosity solution $w$ to \eqref{sigmakproblem} and a closed zero measure set $S \subset M$ such that $w \in C^\infty(M\setminus S)$. 
\end{theorem}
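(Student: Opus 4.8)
The plan is to take the Lipschitz viscosity solution produced by the continuity method of Li--Nguyen, show it is punctually second-order differentiable off a closed null set, and then run the same local regularity argument that underlies \autoref{themaintheorem}.

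First I would invoke \cite{MR4210287}*{Theorem 1.3} (valid for every $0<R\in C^\infty(M)$, as noted above) to obtain a locally Lipschitz viscosity solution $w$ of \eqref{sigmakproblem} together with the smooth approximants $w_\tau$ solving \eqref{modifiedschouten}, with $w_\tau\to w$ locally uniformly as $\tau\to1$ and $\|w_\tau\|_{C^0(M)}+\|w_\tau\|_{C^1(M)}\le C$ uniformly in $\tau$. The reason for keeping the $w_\tau$ is that, while $\sigma_1(-g_{w_\tau}^{-1}A^\tau_{g_{w_\tau}})$ need not be bounded along $\{\sigma_k=R\}\cap\Gamma_k^+$ (so no uniform pointwise Hessian bound is available for $k<n$), the hypothesis $2k>n$ does yield a uniform \emph{integral} bound. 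Indeed, $2k>n$ forces $\sum_i|\lambda_i|\le c_{n,k}\,\sigma_1(\lambda)$ for $\lambda\in\Gamma_k^+$, a Maclaurin-type inequality in the cone which I expect is \autoref{notinterestingtechnicallemma}; combined with the fact that $\sigma_1(-g_{w_\tau}^{-1}A^\tau_{g_{w_\tau}})$ equals $e^{-2w_\tau}$ times $\Delta_g w_\tau$ plus terms controlled by $\|w_\tau\|_{C^1}$, and with $\int_M e^{-2w_\tau}\Delta_g w_\tau\,dV_g=2\int_M e^{-2w_\tau}|\nabla_g w_\tau|_g^2\,dV_g\le C$ on the closed manifold $M$, this gives $\|\nabla_g^2 w_\tau\|_{L^1(M)}\le C$ uniformly. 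Passing to the limit, $\nabla_g w\in BV(M)$, so $w$ is approximately twice differentiable a.e.

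The main step, and the main obstacle, is to upgrade this to punctual (Alexandrov-type) second-order differentiability of $w$ at almost every point. Here I would combine the one-sided information carried by the viscosity constraint $\lambda(-g_w^{-1}A_{g_w})\in\Gamma_k^+$ --- which, up to the geometric lower-order terms and the varying metric, exhibits $w$ locally as a $k$-convex function in the sense of Trudinger--Wang --- with the $W^{2,1}$-type control just obtained, and apply Chaudhuri and Trudinger's extension of the Alexandroff-Buselman-Feller Theorem \cite{MR2133413}, which for $k>n/2$ gives pointwise twice-differentiability a.e. The delicate points are that the Schouten operator is not the pure Hessian operator, so one must subtract suitable smooth functions and work in geodesic normal coordinates to reduce to the model $k$-convex situation, and that the convergence $w_\tau\to w$ must be used to transfer the admissibility and the Hessian regularity to the limit; this reduction is exactly what the abstract statement \autoref{punctuallysecondorderdifferentiablemaing} isolates and what \autoref{checkconditionnaturalgood} verifies for \eqref{sigmakproblem} when $k>n/2$. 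The outcome is a set of measure zero off which $w$ has a genuine second-order Taylor expansion.

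Finally, let $S$ be the singular set \eqref{singularsetclosed}; by \autoref{C2alpha} it is closed, and by the previous step it is null. At any $x_0\in M\setminus S$ the symmetric endomorphism in the Taylor expansion of $w$ satisfies $\sigma_k\bigl(\lambda(-g_w^{-1}A_{g_w})\bigr)(x_0)=R(x_0)>0$ (a standard perturbation of the Taylor polynomial by $\pm\varepsilon\,d(\cdot,x_0)^2$ against the sub/supersolution property forces the equation to hold pointwise there), and since $\sigma_k$ vanishes on $\partial\Gamma_k^+\cap\overline{\Gamma_k^+}$ the value $R(x_0)>0$ puts $\lambda(-g_w^{-1}A_{g_w})(x_0)$ in the open cone $\Gamma_k^+$, so the equation is non-degenerate elliptic at $x_0$. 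Then, exactly as in the proof of \autoref{themaintheorem}, the inverse function theorem produces a smooth solution of \eqref{sigmakproblem} in a small ball about $x_0$ sharing the $2$-jet of $w$, so $w$ is, after rescaling, an $\varepsilon$-small perturbation of this smooth solution; Savin's small-perturbation regularity theorem \cite{MR2334822} gives $w\in C^{2,\alpha}$ near $x_0$, and elliptic bootstrapping yields $w\in C^\infty$ near $x_0$. As $x_0\in M\setminus S$ was arbitrary, $w\in C^\infty(M\setminus S)$.
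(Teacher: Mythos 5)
Your proposal is correct in its overall architecture and matches the paper's own strategy: take the Li--Nguyen approximants $w_\tau$, show the limit $w$ is punctually second-order differentiable off a null set, and then apply the inverse-function-theorem/Savin argument of \autoref{C2alpha}. Where you diverge from the paper is in the mechanism used to produce the Hessian as a signed Radon measure. You propose a \emph{global} $L^1$ bound: the cone condition forces $\Delta_g w_\tau\ge -C'$ pointwise (with $C'$ controlled by $\|w_\tau\|_{C^1}$), the divergence theorem on the closed manifold gives $\int_M\Delta_g w_\tau=0$ hence $\|\Delta_g w_\tau\|_{L^1}\le C$, and the Newton--Maclaurin type inequality $\sum_i|\lambda_i|\le c_n\,\sigma_1(\lambda)$ on $\Gamma_2^+$ (valid for all $k\ge 2$, not just $k>n/2$) upgrades this to a uniform $L^1$ bound on the full Hessian, whence $\nabla_g w\in BV$ after weak-$\ast$ compactness. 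The paper instead proceeds locally: \autoref{checkconditionnaturalgood} pairs the second-order operator with matrices in the dual cone $(\Gamma_2^+)^\ast$, shows the resulting functionals on $C_c^\infty(B_1)$ are, after adding a fixed multiple of $\int\phi$, \emph{positive}, and then invokes the Riesz representation theorem to recover the measures $\mu^{ij}$. Both routes are sound. The global route is arguably shorter but uses the closedness of $M$ more essentially; the paper's route works entirely inside a chart and thereby pinpoints the role of the varying $W$.

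Two small misalignments with the paper's bookkeeping are worth flagging. First, \autoref{notinterestingtechnicallemma} is not the Maclaurin inequality you guessed; it is a technical lemma that absorbs the gradient-quadratic and connection terms (and the modification $A^\tau_{g_{w_\tau}}$) into $C|x|^2$, reducing the admissibility constraint to a clean ``$k$-convexity plus a small trace correction'' on which the Chaudhuri--Trudinger machinery can run; this is exactly the reduction you later acknowledge needing when you say ``one must subtract suitable smooth functions.'' Second, the hypothesis $k>n/2$ enters only in the Chaudhuri--Trudinger step (\autoref{punctuallysecondorderdifferentiablemaing}, through the exponent $\alpha=(2-n/k)/2>0$ required for the weighted H\"older estimate), not in the $L^1$/measure step, which holds for all $k\ge 2$.
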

We also consider the following variant of the $\sigma_k$-Yamabe-type problem in the negative cone case. 
Let $(M,g)$ be an $n$-dimensional Riemannian manifold. We consider
\begin{equation} \label{Krylovstandard}\left\{
	\begin{aligned}
		\sigma_k\bigl(\lambda(-g_w^{-1}A_{g_w})\bigr)+\alpha(x) \sigma_{k-1}\bigl(\lambda(-g_w^{-1}A_{g_w})\bigr)&=\sum_{l=0}^{k-2}\alpha_l(x)\sigma_l\bigl(\lambda(-g_w^{-1}A_{g_w})\bigr),\\
		\lambda(-g_w^{-1}A_{g_w}) &\in \Gamma_{k-1}^+,\\
		0 < \alpha_l \text{ and } \alpha, \alpha_l \in C^\infty(M) \quad &\text{ for } 0 \leq l \leq k-2,\\ 
	\end{aligned}\right.
\end{equation}
where $\sigma_0(\lambda)=1$ for all $\lambda \in \mathbb{R}^n$. 
\eqref{Krylovstandard} is commonly known as the $\sigma_k$-Yamabe problem of Krylov type. \eqref{Krylovstandard} was first studied in \cites{MR4767573, MR4746060}, in which {Chen and He} prove global $C^0$ and $C^1$ estimates of classical solutions to \eqref{Krylovstandard}. Also see \cites{MR4387174,MR4278951,MR1284912} for some problems similar to \eqref{Krylovstandard}. \par 
Just like \eqref{sigmakproblem}, \eqref{Krylovstandard} is a non-uniformly elliptic fully nonlinear PDE. We can define viscosity solutions to \eqref{Krylovstandard} by replacing $\Gamma_k^+$ by $\Gamma_{k-1}^+$, $R(x_0)$ by $-\alpha(x_0)$, and $\sigma_k(\cdot )$ by 
\begin{equation*}
	\frac{\sigma_k}{\sigma_{k-1}}(\cdot)-\sum_{l=0}^{k-2} \alpha_l(x_0) \frac{\sigma_{l}}{\sigma_{k-1}}(\cdot),
\end{equation*}
in \autoref{sigmakproblemviscositydefinitionelementary}.\par 

Note that, unlike \eqref{sigmakproblem}, \eqref{Krylovstandard} is not homogeneous with respect to the eigenvalues of $-g^{-1}_wA_{g_w}$.
\begin{theorem}\label{corollary3}
Let $(M,g)$ be an $n$-dimensional closed Riemannian manifold with $n\geq 3$ and $\lambda(-g^{-1}A_g) \in \Gamma_k^+$. Let $0 < \alpha_l \in C^\infty(M)$ for $0 \leq l \leq k-2$, and $\alpha \in C^\infty(M)$. If $k-1>n/2$, then there exists a Lipschitz viscosity solution $w$ to \eqref{Krylovstandard} and a closed zero measure set $S \subset M$ such that $w \in C^\infty(M\setminus S)$. 
\end{theorem}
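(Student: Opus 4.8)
The plan is to mirror, step for step, the proof of \autoref{corollary2}, with the $\sigma_k$-operator on $\Gamma_k^+$ replaced everywhere by the Krylov operator
\[
	G(\lambda,x)\;=\;\frac{\sigma_k}{\sigma_{k-1}}(\lambda)\;-\;\sum_{l=0}^{k-2}\alpha_l(x)\,\frac{\sigma_l}{\sigma_{k-1}}(\lambda)
\]
on the cone $\Gamma_{k-1}^+$, and with $R(x)$ replaced by $-\alpha(x)$, in the viscosity sense fixed right after \eqref{Krylovstandard}. First I would construct a locally Lipschitz viscosity solution. Following the continuity scheme used for \autoref{corollary2} (cf.\ \cite{MR4210287}), I replace the Schouten tensor by the modified tensor $A^{\tau}_{g_w}$ of \eqref{modifiedschouten} and, for $0\le\tau<1$, solve the corresponding modified Krylov-type problem; since $\lambda(-g^{-1}A_g)\in\Gamma_k^+$ the background metric is admissible for this family, and the global $C^0$ and $C^1$ a priori estimates of Chen--He \cites{MR4767573,MR4746060} apply to the modified tensor and to the prescribed data $\alpha,\alpha_l$ with constants independent of $\tau$, so letting $\tau\to1$ produces a locally Lipschitz $w$ that solves \eqref{Krylovstandard} in the viscosity sense.

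Second, I would feed $w$ into the generic-regularity machinery. The content of \autoref{punctuallysecondorderdifferentiablemaing}, \autoref{notinterestingtechnicallemma} and \autoref{C2alpha} is stated for a class of degenerate elliptic operators broad enough to contain $G$, so the only new input needed is the analogue of \autoref{checkconditionnaturalgood} for $G$: (i) $G$ is degenerate elliptic on $\Gamma_{k-1}^+$ and strictly elliptic in its interior, because $\partial_{\lambda_i}(\sigma_k/\sigma_{k-1})>0$ there while, for $l\le k-2$, $\sigma_{k-1}/\sigma_l$ is increasing on $\Gamma_{k-1}^+$ (a standard property of the elementary symmetric ratios), so each $-\alpha_l\,\sigma_l/\sigma_{k-1}$ with $\alpha_l>0$ is nondecreasing in $\lambda_i$; (ii) $\Gamma_{k-1}^+$-admissibility is preserved under the operations used in those lemmas; and (iii) at any point where the equation holds with $\lambda(-g_w^{-1}A_{g_w})\in\Gamma_{k-1}^+$ one has $\sigma_k+\alpha\sigma_{k-1}=\sum_{l=0}^{k-2}\alpha_l\sigma_l\ge\alpha_0>0$, since $\sigma_l>0$ on $\Gamma_{k-1}^+$ for $0\le l\le k-1$, and by the Newton inequality $\sigma_{k-2}\sigma_k\le c_{n,k}\sigma_{k-1}^2$ this positive lower bound forces $\lambda$ to stay a definite distance from $\partial\Gamma_{k-1}^+$ along solutions, so it plays the role that $R>0$ plays in \autoref{themaintheorem} and \autoref{corollary2}. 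Since $k-1>n/2$, the viscosity bound $\lambda(-g_w^{-1}A_{g_w})\in\Gamma_{k-1}^+$ makes $w$ a $(k-1)$-convex function after subtracting a smooth function, so Chaudhuri--Trudinger's extension of the Alexandroff--Bakelman--Feller theorem \cite{MR2133413} applies and $w$ is punctually second-order differentiable off a set $S$ of measure zero; \autoref{C2alpha} then shows $S$ is closed.

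Third, exactly as at the end of the proof of \autoref{themaintheorem}, about each $x_0\in M\setminus S$ I would use the inverse function theorem to build a local smooth solution of \eqref{Krylovstandard} with the same second-order jet as $w$ at $x_0$, and then apply Savin's small-perturbation regularity theory \cite{MR2334822} to the equation cut out by that local solution, which is uniformly elliptic near $x_0$ by the lower bound in (iii), to conclude $w\in C^\infty$ in a neighborhood of $x_0$. Hence $w\in C^\infty(M\setminus S)$.

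The hard part will be the second step, specifically the verification of the structural hypotheses of \autoref{punctuallysecondorderdifferentiablemaing} and \autoref{C2alpha} for $G$: unlike $\sigma_k$, the operator in \eqref{Krylovstandard} is inhomogeneous in $\lambda$ and genuinely $x$-dependent through $\alpha$ and the $\alpha_l$, so any scaling or normalization used for $\sigma_k$ has to be reorganized, and one must check that the lower-order terms $-\alpha_l\sigma_l/\sigma_{k-1}$ neither spoil ellipticity nor destroy the one-sided control on $D^2w$ on $\Gamma_{k-1}^+$ needed to invoke \cite{MR2133413}, and that the degeneration of $\sigma_k/\sigma_{k-1}$ near $\partial\Gamma_{k-1}^+$ causes no trouble in the test-function arguments. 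The construction of the approximating family $w_\tau$ in the first step is comparatively routine given \cites{MR4767573,MR4746060}, but it does require checking (or reproving) that those a priori bounds hold uniformly in $\tau$ and for the modified tensor.
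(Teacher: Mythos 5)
Your overall strategy is the right one: build the viscosity solution by the Chen--He continuity method, feed it into the second-order-differentiability machinery of Section~4, and finish with the inverse function theorem and Savin's small-perturbation theory as in Theorem~\ref{C2alpha}. That is the architecture of the paper's argument. However, there are two genuine gaps in the middle step.

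First, the sentence claiming that ``the viscosity bound $\lambda(-g_w^{-1}A_{g_w})\in\Gamma_{k-1}^+$ makes $w$ a $(k-1)$-convex function after subtracting a smooth function, so Chaudhuri--Trudinger's extension of the Alexandroff--Bakelman--Feller theorem applies'' is not correct in the generality of Theorem~\ref{corollary3}. In the paper's notation the admissibility condition is about the matrix $W\circ D^2 w\circ W$ with $W=\sqrt{g^{-1}}$, not about $D^2 w$ itself. When $(M,g)$ is a general closed manifold (not conformally flat) and $k-1<n$, conjugation by the varying $W$ destroys the $\Gamma_{k-1}^+$-cone condition on $D^2 w$, and this is exactly the obstruction that the paper records in Theorem~\ref{2convexity}, whose hypothesis $W\equiv\delta_{ij}$ you cannot meet here. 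The correct route --- which you mention but then abandon --- is Theorem~\ref{punctuallysecondorderdifferentiablemaing}, whose extra hypothesis \eqref{punctuallyc2conditionsmdistribution} (the distributional second derivatives are signed Radon measures) must itself be established. That is the whole point of Lemma~\ref{notinterestingtechnicallemma} and Theorem~\ref{checkconditionnaturalgood}: one exploits the uniformly Lipschitz \emph{smooth} family $w_\tau$ from the continuity method, tests against matrices in the dual cone $(\Gamma_2^+)^\ast$, and invokes the Riesz representation theorem. This step cannot be carried out from the limit $w$ alone; it genuinely uses the approximating family, which is why the result only covers solutions produced by the continuity method and not arbitrary Lipschitz viscosity solutions (compare Theorem~\ref{themaintheorem}, where the direct route does work but requires conformal flatness or $k=n$).

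Second, your point~(iii) tries to extract the ``$R>0$-analogue'' --- that the eigenvalues are strictly interior to $\Gamma_{k-1}^+$ at points of punctual second-order differentiability --- directly from the equation and a Newton inequality. This is circular: one must first know the eigenvalues are interior to conclude the pointwise equation $\widetilde f=-\alpha e^{2w}$ holds there (this is the content of Lemma~\ref{punctuallysecondincone} in the $\sigma_k$ case), so you cannot use the equation to produce the interiority. The paper closes this circle differently. Theorem~\ref{Krylovexistence}, extracted from \cite{MR4767573}, gives more than a viscosity solution of \eqref{PDEformofKrylov}: it yields a $w$ that is simultaneously a viscosity \emph{subsolution} to $\sigma_{k-1}(-g_w^{-1}A_{g_w})=C$ with $\lambda\in\Gamma_{k-1}^+$ for some $C>0$ depending only on $\alpha$, $\alpha_l$, $k$, $(M,g)$, because the a priori estimates of Chen--He force the smooth approximants $w_\tau$ to satisfy $\sigma_{k-1}(\lambda(-g_{w_\tau}^{-1}A^\tau_{g_{w_\tau}}))>C$ uniformly in $\tau$. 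This subsolution property is exactly what the paper's unnamed lemma after Theorem~\ref{Krylovexistence} uses to prove the Krylov analogue of Lemma~\ref{punctuallysecondincone}: touch with a paraboloid $P+\epsilon|x-x_0|^2$ from above, apply the subsolution inequality $\sigma_{k-1}\ge C$, and let $\epsilon\to 0$. You should bring this extra structure along from the existence theorem rather than trying to regenerate it a posteriori from the equation.

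With these two corrections --- use Theorem~\ref{punctuallysecondorderdifferentiablemaing} together with Lemma~\ref{notinterestingtechnicallemma} and Theorem~\ref{checkconditionnaturalgood} on the family $w_\tau$ rather than the raw Chaudhuri--Trudinger theorem on $w$, and carry the subsolution property \eqref{strictlyinsigmakcone} from Theorem~\ref{Krylovexistence} in place of the Newton-inequality heuristic --- your proposal recovers the paper's proof.
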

In \autoref{corollary3}, there is no assumption on the function $\alpha$ other than its smoothness. 
The existence of the Lipschitz viscosity solution $w$ to \eqref{Krylovstandard} comes from \cite{MR4767573}. The viscosity solution $w$ is also the limit of a sequence of uniformly bounded and uniformly Lipschitz solutions to the modified $\sigma_k$-Yamabe problem of Krylov type. See \autoref{Krylovexistence} for more details. In particular, \autoref{Krylovexistence} also shows that the viscosity solution $w$ is punctually second-order differentiable almost everywhere when $k-1 > n/2$. The closedness of the singular set \eqref{singularsetclosed} of the viscosity solution $w$ in \autoref{corollary3} follows from \autoref{C2alpha} with some modification in the proof. See \autoref{Krylovtypesection} for more details.\par 

At the time of writing this paper, there are no results on the existence of classical solutions to equations \eqref{sigmakproblem}, \eqref{sigmakboundarytoinfty}, or \eqref{Krylovstandard} that are known to the author. In fact, {Li, Nguyen, and Xiong} \cite{MR4611402} show the nonexistence of classical solutions to \eqref{sigmakboundarytoinfty} when $k \geq 2$ and $(\Omega,g)$ is a bounded Euclidean domain with more than one boundary component. Our results, specifically \autoref{corollary1.5}, \autoref{corollary2}, and \autoref{corollary3}, further justify the consideration of viscosity solutions to each of these problems in the corresponding cases, building on the work of {Li, Nguyen, and Xiong} \cite{MR4611402}.
\subsection{Future Directions}
For future directions, when $k \leq n/2$, we do not know any examples of Lipschitz viscosity solutions to \eqref{sigmakproblem} with singular sets of positive measure. If such examples exist, a follow-up question would be: What are the sufficient conditions on the viscosity solutions so that their singular sets are of small measure? 

Another question is related to the base metrics of the Riemannian manifolds. Note that in \autoref{corollary1.5}, \autoref{corollary2}, and \autoref{corollary3}, we have assumptions on the base metric in each of them to ensure the existence of viscosity solutions. Such kind of assumptions are common in the field. However, \autoref{nonkadmissible} shows that such kind of assumptions are not necessary when $k$ is small. Also see \cite{MR4817303}, where {Yuan} proves, for compact manifolds with boundary and $2 \leq k <n/2$, the existence of a smooth function $w$ such that $\lambda(-g_w^{-1}A_{g_w}) \in \Gamma_k^+$.

\begin{theorem}\cite[Theorem 1.1]{MR4956437}\label{nonkadmissible}
	Let $n\geq 3$, $2\leq  k < n/2$, $R=1$, and $(M,g)$ be a compact $n$-dimensional Riemannian manifold with nonempty boundary $\partial M$. Then the $\sigma_k$-Loewner–Nirenberg problem \eqref{sigmakboundarytoinfty} has a locally Lipschitz viscosity solution $w$. 
\end{theorem}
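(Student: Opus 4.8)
The plan is to realize $w$ as a limit of smooth solutions on an exhaustion, in the spirit of the continuity method behind \autoref{corollary2}. Fix a defining function $\rho\in C^{\infty}(M)$ with $\rho>0$ on the interior $M^{\circ}=M\setminus\partial M$ and $\rho=d(\cdot,\partial M)$ near $\partial M$, and for $j\in\mathbb{N}$ set $M_{j}=\{\rho\ge 1/j\}$. On each $M_{j}$ I would solve the Dirichlet problem for \eqref{sigmakproblem} with $R\equiv 1$ and boundary data $w_{j}=\log j$ on $\partial M_{j}$, establish uniform-in-$j$ local $C^{0}$ and $C^{1}$ bounds on $M^{\circ}$ (finite on compact subsets, diverging at $\partial M$), extract by Arzel\`a--Ascoli a subsequence $w_{j}\to w$ in $C^{0}_{\mathrm{loc}}(M^{\circ})$ with $w$ locally Lipschitz, and use stability of viscosity solutions under local uniform convergence to conclude that $w$ solves \eqref{sigmakproblem} in the viscosity sense; a lower barrier will force $w(x)\to\infty$ as $d(x,\partial M)\to 0$.

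Everything hinges on producing a \emph{globally $k$-admissible subsolution that blows up at $\partial M$}, and this is the only step where $2k<n$ is used, since we assume nothing about $\lambda(-g^{-1}A_{g})$. Near $\partial M$ the function $-\log\rho$ already works: a direct computation gives $\lambda\bigl(-g_{-\log\rho}^{-1}A_{g_{-\log\rho}}\bigr)\to(\tfrac12,\dots,\tfrac12)\in\Gamma_{k}^{+}$ as $d\to 0$, an open condition, so $-\log\rho$ is admissible on a collar of $\partial M$. On the complementary compact region I would repair admissibility by adding large oscillations. The relevant algebraic fact is
\begin{equation*}
\sigma_{j}(-a,a,\dots,a)=a^{j}\binom{n-1}{j-1}\frac{n-2j}{j},\qquad a>0
\end{equation*}
(one negative entry, $n-1$ equal positive entries), so the ``one negative direction'' vector $(-a,a,\dots,a)$ lies in $\Gamma_{k}^{+}$ exactly when $2k<n$; and for $\theta\in C^{\infty}(M^{\circ})$ with $\nabla\theta\ne 0$ a short calculation shows that $-g_{-\log\rho+L\theta}^{-1}A_{g_{-\log\rho+L\theta}}$ has, to leading order in the amplitude $L$, eigenvalues equal to a positive multiple of $\bigl(-\tfrac12|\nabla\theta|^{2},\tfrac12|\nabla\theta|^{2},\dots,\tfrac12|\nabla\theta|^{2}\bigr)$ --- precisely the configuration above. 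Patching finitely many such local models (charts covering the compact region, coordinate functions as the $\theta$'s, a partition of unity, and the fact that a supremum of viscosity subsolutions is a viscosity subsolution), with amplitudes $L$ chosen large but in a hierarchy so that at every point a dominant nonvanishing oscillation keeps the eigenvalue vector inside $\Gamma_{k}^{+}$, and tapering the oscillations into the collar, yields a $k$-admissible $w_{0}$ on $M^{\circ}$ with $w_{0}=-\log\rho$ near $\partial M$. Finally, since $S(w-c,g)=S(w,g)$ while $\sigma_{k}\bigl(\lambda(-g_{w-c}^{-1}A_{g_{w-c}})\bigr)=e^{2kc}\sigma_{k}\bigl(\lambda(-g_{w}^{-1}A_{g_{w}})\bigr)$, the function $\underline w=w_{0}-c$ with $c$ large is a $k$-admissible subsolution with $\sigma_{k}\ge 1$ that still blows up at $\partial M$; dually, $\overline w=-\log\rho+C$ with $C$ large is a viscosity supersolution --- where it is admissible the same scaling gives $\sigma_{k}\le 1$, and where it is not, the supersolution condition holds automatically --- and $\underline w\le\overline w$ after enlarging $c$.

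With these barriers the rest is routine. The comparison principle for the Dirichlet problem gives $\underline w\le w_{j}\le\overline w$ on $M_{j}$, hence the uniform local $C^{0}$ bound; solvability of the smooth $w_{j}$ follows by the method of continuity from the standard a priori estimates for admissible solutions of $\sigma_{k}$-type equations in the negative cone once a subsolution is available; the local interior gradient estimate, depending only on the $C^{0}$ bound, supplies the uniform local Lipschitz bound; and the compactness and stability argument of the first paragraph then produces $w$, with $\underline w\le w$ forcing the boundary blow-up.

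The main obstacle is the admissibility construction. In the regime $k>n/2$ at most $n-k<n/2$ of the eigenvalues of an element of $\Gamma_{k}^{+}$ can be negative and no such repair exists; here one must use the room that $2k<n$ provides and then globalize it over $M^{\circ}$, whose topology is unrestricted, keeping the hierarchy of oscillation amplitudes compatible near the codimension-one loci where the model gradients vanish --- this patching is the technical heart of the argument. The a priori estimates and the passage to the limit should follow patterns already present in the cited works.
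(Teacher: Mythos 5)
The theorem you are proving is stated in this paper as a citation to Duncan--Nguyen (\cite{duncan2023sigmakloewnernirenberg}) and is not reproved here, so there is no in-paper proof to compare against; I evaluate your sketch on its own terms. Your key algebraic observation --- that the configuration $(-a,a,\dots,a)$ lies in $\Gamma_{k}^{+}$ precisely when $2k<n$, and that a large oscillation $L\theta$ produces this eigenvalue structure to leading order --- is indeed the essential insight that makes the theorem possible without a $k$-admissibility hypothesis, and the scaling identity $\sigma_{k}(\lambda(-g_{w-c}^{-1}A_{g_{w-c}}))=e^{2kc}\sigma_{k}(\lambda(-g_{w}^{-1}A_{g_{w}}))$ correctly converts admissibility into a strict subsolution. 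These parts are sound.

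There is, however, a genuine gap in the step where you solve the Dirichlet problems: you assert that ``solvability of the smooth $w_{j}$ follows by the method of continuity from the standard a priori estimates for admissible solutions of $\sigma_{k}$-type equations in the negative cone.'' No such $C^{2}$ a priori estimate exists in the negative cone; this paper explicitly notes the failure and cites the Sheng--Trudinger--Wang counterexamples. So the closedness half of the continuity argument breaks down and you cannot expect smooth $w_{j}$. The approach that does work --- and that the present paper attributes to \cite{MR4210287} and \cite{duncan2023sigmakloewnernirenberg} as ``the continuity method'' --- is to solve the $\tau$-modified problem, replacing $A_{g}$ by $A_{g}^{\tau}=\tfrac{1}{n-2}\bigl(\mathrm{Ric}_{g}-\tau\tfrac{\mathrm{Sc}_{g}}{2(n-1)}g\bigr)$ as in \eqref{modifiedschouten}, for which $C^{2}$ estimates \emph{are} available when $\tau<1$, obtain uniform $C^{0}$ and $C^{1}$ bounds independent of $\tau$, and then send $\tau\to1$ to produce a Lipschitz viscosity solution. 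You should build this regularization into your exhaustion scheme rather than attempt smooth Dirichlet solvability directly. A secondary concern is that your globalization of the subsolution by a hierarchy of oscillation amplitudes and a supremum of viscosity subsolutions is only sketched: each local model $-\log\rho+L_{\alpha}\theta_{\alpha}$ is a subsolution only where its own gradient dominates, so the pieces must be made comparable near the patch boundaries (e.g.\ by tapering each piece below a neighboring one before cutting it off) to invoke the sup-of-subsolutions lemma; as written, setting a piece to $-\infty$ outside its chart destroys upper semicontinuity. You correctly flag this as the technical heart, but the argument as stated does not yet close.
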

Our result \autoref{themaintheorem} does not apply to the viscosity solution $w$ in \autoref{nonkadmissible}. Note that when $(M,g)$ is a bounded smooth Euclidean domain, by \cite{MR4611402}, the unique viscosity solution is smooth near the boundary of the Euclidean domain. Given \autoref{nonkadmissible} and \cite{MR4611402}, a future question would be: For \autoref{nonkadmissible}, if we assume $\lambda(-g^{-1}A_g)\in \Gamma_k^+$, could the viscosity solution in the theorem be punctually second-order differentiable almost everywhere?

We have one more question related to the sizes and the shapes of the singular sets. While in \autoref{corollary2}, the singular sets of the Lipschitz viscosity solutions to \eqref{sigmakproblem} are small when $k>n/2$, it is unclear if these viscosity solutions can have point singularities, especially when $R=1$. For comparison, we note the following result. In \cite{MR4880431}, {Shankar and Yuan} study viscosity solutions to the following constant Hessian equations on Euclidean domains,
\begin{align}\label{constanthessian}
	\sigma_k (\nabla^2 w)&=1, & \lambda(\nabla^2 w)& \in \Gamma_k^+. 
\end{align}
Using an a priori estimate only available for \eqref{constanthessian}, {Shankar and Yuan} prove an extension of the Alexandroff-Buselman-Feller Theorem, and show the singular sets of viscosity solutions to \eqref{constanthessian} are of measure zero for arbitrary $n,k$. Additionally, when $n=3,4$, and $k=2$, the singular sets are removable, hence viscosity solutions are smooth \autocites[][]{MR2487850}[][Remark 5.1]{MR4880431}. We wonder when $R=1$ and $k>n/2$, if the singular sets of the viscosity solutions in \autoref{corollary2} can have Hausdorff dimension less than $n-1$.  \par 

Our paper is structured as follows. In \autoref{sectionhistoricalresults}, we state some historical results related to \eqref{sigmakproblem}. In \autoref{seclocalsolution}, we set up the notations and state some elementary properties of viscosity solutions. In \autoref{thenewsection}, we prove a modified version of {Chaudhuri and Trudinger}'s extension of the Alexandroff-Buselman-Feller Theorem. We also show that such modified version applies to the viscosity solutions in \autoref{corollary2} and \autoref{corollary3}. In \autoref{localregularitysection}, we prove \autoref{themaintheorem}. In \autoref{Krylovtypesection}, we extend the results in earlier sections to the $\sigma_k$-Yamabe problem of Krylov type.

\subsection*{Acknowledgments}
The author would like to thank Hao Fang for introducing this problem and for providing guidance and suggestions. Additionally, the author expresses gratitude to Lihe Wang for helpful discussions and guidance, particularly regarding the use of the inverse function theorem to construct local solutions. The author would also like to thank Biao Ma and Wei Wei for helpful discussions. The author would also like to thank Jonah Duncan, He Yan, Luc Nguyen, Li Chen, and the two reviewers for their comments on the paper.\par 
The author also acknowledges partial support from the Erwin and Peggy Kleinfeld Graduate Fellowship fund.

\section{Overview of Known Results}
\label{sectionhistoricalresults}
	\subsection{$\sigma_k$-Yamabe-type Problem In The Positive Cone Case}
	
	\eqref{sigmakproblem} has been studied extensively under a different cone condition, that is,
	\begin{equation}\label{sigmakproblempositive}\left\{
		\begin{aligned}
			\sigma_k(\lambda(g_w^{-1}A_{g_w}))&=R(x)>0,\\	\lambda(g_w^{-1}A_{g_w}) & \in \Gamma_k^+.
		\end{aligned}\right.
	\end{equation}
	\eqref{sigmakproblempositive} is mostly known as the $\sigma_k$-Yamabe-type Problem ($k\geq 2$) in the positive cone case.
	Here we state some historical results related to \eqref{sigmakproblempositive}. We do not intend to be comprehensive here.\par 
	\begin{definition}\cite{MR1738176}
		Let $(M,g)$ be a Riemannian manifold. The metric $g$ is $k$-admissible if $\lambda(g^{-1}A_g) \in \Gamma_k^+$.
	\end{definition}
	Problem \eqref{sigmakproblempositive} was initiated by {Viaclovsky} in \cite{MR1738176}. In \cite{MR1925503}, {Viaclovsky} studies \eqref{sigmakproblempositive} on closed $k$-admissible Riemannian manifolds $(M,g)$ and proves global $C^2$ and $C^1$ estimates for classical solutions to \eqref{sigmakproblempositive} assuming global $C^0$ estimates. Concurrently, with a completely different approach, {Chang, Gursky, and Yang} prove in \cite{MR1945280} that for closed $2$-admissible $4$-dimensional manifolds $(M,g)$ that are not conformally equivalent to the round sphere, classical solutions to \eqref{sigmakproblempositive} have global $C^0$ and $C^1$ estimates when $k=2$. Later in \cite{MR1976045}, using the concavity of $\sigma_k^{1/k}$, {Guan and Wang} prove local $C^2$ and $C^1$ estimates for classical solutions to \eqref{sigmakproblempositive} on Riemannian manifolds assuming local $C^0$ estimates. See also \cite{MR2204639} for a different approach and \cite{MR1988895} for some other related works in this direction. Recently in \cite{chu2023liouville}, {Chu, Li, and Li} give a proof of local $C^1$ estimates from $C^0$ estimates of classical solutions to \eqref{sigmakproblempositive}  without using the concavity of $\sigma_k^{1/k}$.  In case \eqref{sigmakproblempositive} is degenerate, i.e.,  assuming a non-strict inequality $R \geq 0$, {Trudinger and Wang} \cite{MR2481828} prove a Harnack inequality independent of $R$ when $k > n/2$ for closed  $k$-admissible Riemannian manifolds not conformally equivalent to the round sphere. \par
	Current existence results for classical solutions to \eqref{sigmakproblempositive} as well as the $C^0$ estimates depend heavily on the conformal structure of the closed Riemannian manifolds and the size of $k$. For each result listed in this paragraph, the set of classical solutions to \eqref{sigmakproblempositive} is compact in the $C^m$ topology for $m \geq 0$, assuming additionally that the corresponding manifolds are not conformally equivalent to the round sphere. {Gursky and Viaclovsky} \cite{MR2078344} define $k$-maximal volume for closed Riemannian manifolds when $k \geq n/2$ and give a sufficient condition for the existence of smooth solutions to \eqref{sigmakproblempositive}. In particular, let $(M,g)$ be a $4$-dimensional closed $k$-admissible manifold,  if $R = 1$, then \eqref{sigmakproblempositive} has smooth solutions for $2 \leq k \leq 4$. In a later work \cite{MR2373147}, {Gursky and Viaclovsky} prove the existence of smooth solutions to \eqref{sigmakproblempositive} provided that $(M,g)$ is closed and $k$-admissible with $k > n/2$. \par 
	
	When $R=1$, if either $k=2$ or $(M,g)$ is conformally flat, then \eqref{sigmakproblempositive} is variational. {Sheng, Trudinger, and Wang} \cite{MR2362323} show that when \eqref{sigmakproblempositive} is variational, $2\leq k \leq n/2$, and $(M,g)$ is a closed Riemannian manifold with $k$-admissible metric, then \eqref{sigmakproblempositive} admits a smooth solution. Their work is an extension of the result in \cite{MR1945280}. They show the existence of classical solutions to \eqref{sigmakproblempositive} when $k=2$, $n=4$, and the closed manifolds have $2$-admissible metrics. Concurrent with \cite{MR2362323}, {Ge and Wang} \cite{MR2290138} prove independently the existence of classical solutions to \eqref{sigmakproblempositive} when $k = 2$ and $R = 1$ on compact orientable Riemannian manifolds of dimension $>8$ with $2$-admissible and non-conformally flat metric. See also \cites{MR1988895,MR2233687}, where {Li and Li} prove the existence of solutions to \eqref{sigmakproblempositive} when $R= 1$ on conformally flat $k$-admissible manifolds. Additionally, they prove the compactness of the solution set in $C^3$ topology when the manifold is not conformally equivalent to the round sphere. Their work is an extension to \cite{MR1978409} that only covers the cases when $k \neq n/2$. 
	
	In \cite{MR3165241}, {Li and Nguyen} consider closed Riemannian manifolds $(M,g)$ that are not conformally equivalent to the round sphere. For $R=1$, {Li and Nguyen} prove the set of solutions $w$ to \eqref{sigmakproblempositive}, satisfying certain lower bounds on the Ricci curvature of $g_w$, is compact in the $C^5$ topology. In case when $k \geq n/2$, the lower Ricci bound assumption is automatically satisfied. Additionally, when $R = 1$ and $k \geq n/2$, {Li and Nguyen} prove the existence of solutions to \eqref{sigmakproblempositive} for all compact $k$-admissible Riemannian manifolds.  \par 
	Most of the results above require the manifold to be $k$-admissible. See \cites{MR1923964,MR2629509,MR4632573,MR4817303} for some results on the existence of $k$-admissible metrics. \par 
	When $(M,g)$ is the canonical Euclidean space $\mathbb{R}^n$, solutions to \eqref{sigmakproblempositive} are commonly referred to as entire solutions. In \cite{MR2165306}, {Chang, Han, and Yang} give a classification of radial entire solutions to both \eqref{sigmakproblem} and \eqref{sigmakproblempositive} when $R=1$. For rotational symmetry of entire solutions to \eqref{sigmakproblempositive}, see \cites{MR1694380,MR2055838,MR1988895,MR2233687} for some historical results. In particular, entire solutions to $\lambda(g_w^{-1}A_{g_w}) \in \partial \Gamma_k^+$ are constants, see \cites{MR2547976,MR3813247} for some references. For some recent results in this direction, in \cite{MR4584983}, {Fang, Ma, and Wei} give a proof of rotational symmetry of entire solutions to \eqref{sigmakproblempositive} on $\mathbb{R}^4$ when $k = 2$ and $R=1$, whose proof does not use the Kelvin transform and also applies to other non-geometric PDEs of similar type. \par 
	
	When $(M,g)$ is the round sphere, problem \eqref{sigmakproblempositive} is known as the $\sigma_k$-Nirenberg problem. There is a vast amount of literature on the Nirenberg problem. We refer the reader to the following papers  
	\cites{MR4691488,MR4458997, MR1680925, MR0925123, MR0908146} and the references therein for some insights into this subject. Also see \cites{MR3964274, MR4241796} for \eqref{sigmakproblempositive} on spheres with conic singularities.
	For the Dirichlet problems associated with \eqref{sigmakproblempositive} and the related boundary a priori estimates, see \cites{MR2333094,MR2570314,MR2293983,MR3130336} for an incomplete list of references. For the uniqueness of solutions to \eqref{sigmakproblempositive}, see \cites{MR3759362,MR3959563} for an incomplete list of references.\par

\subsection{$\sigma_k$-Yamabe-type Problem In The Negative Cone Case}
\eqref{sigmakproblem} was initially proposed in \cite{MR1976082}. In \cite{MR1976082}, {Gursky and Viaclovsky} prove global $C^0$ and $C^1$ estimates of classical solutions to \eqref{sigmakproblem} on closed Riemannian manifolds.  In \cite{chu2023liouville}, {Chu, Li, and Li} prove a local $C^1$ a priori estimate for classical solutions to \eqref{sigmakproblem} assuming a one-sided $C^0$ estimate when $2k \neq n$. However, unlike the positive cone case \eqref{sigmakproblempositive}, there are no known $C^2$ a priori estimates for classical solutions to \eqref{sigmakproblem}, see \cite{MR2362323} for some related counterexamples. For geometric applications of \eqref{sigmakproblem}, see \cite{MR4401830}.
Also see \cite{chu2023liouville}, in which {Chu, Li, and Li} use the Kelvin transform to prove that all entire solutions to $ \lambda(-g_w^{-1}A_{g_w}) \in \partial \Gamma_k^+$ are constant when $k \neq n/2$. They also prove the radial symmetry of viscosity solutions to some related conformally invariant fully nonlinear equations on $\mathbb{R}^n-\{0\}$.
 \par 

Unlike the $\sigma_k$-Yamabe problem in the positive cone case, there are no known results regarding the existence of classical solutions for either \eqref{sigmakproblem} or \eqref{sigmakboundarytoinfty}. In \cites{MR3813247,MR3852835}, {Gonz{\'a}lez, Li, Nguyen, and Wang} initiate the study of viscosity solutions to \eqref{sigmakproblem} by proving a comparison principle for viscosity subsolutions and viscosity supersolutions to \eqref{sigmakproblem} on bounded Euclidean domains. We summarize the main results of {Gonz{\'a}lez, Li, Nguyen, and Xiong} from the sequence of works \cites{MR3852835,MR4210287, MR4611402} as follows.
\begin{theorem}\autocites[][Theorem 1.3]{MR4210287}[][Theorem 1.3]{MR4611402}
	
	\label{li2021ath}
	Let $n\geq 3$, $2 \leq k \leq n$. Then 
	\begin{enumerate}
		\item if $(M,g)$ is a closed Riemannian manifold with $\lambda(-g^{-1}A_g) \in \Gamma_k^+$, then \eqref{sigmakproblem} has a Lipschitz viscosity solution for positive functions $R \in C^\infty(M)$;\label{li2021ath1}
		\item \label{li2021ath2}if $(\Omega,g)$ is a smooth bounded Euclidean domain with the canonical Euclidean metric, then the $\sigma_k$-Loewner–Nirenberg problem \eqref{sigmakboundarytoinfty} has a unique viscosity solution $w$ when $R=1$. Moreover, $w$ is locally Lipschitz and smooth in a neighborhood of $\partial \Omega$; \item if $(\Omega, g)$ is a smooth bounded Euclidean domain with the canonical Euclidean metric and $\partial \Omega$ has more than one connected component, then there is no classical solution to \eqref{sigmakboundarytoinfty} when $R=1$. \label{li2021ath3}
	\end{enumerate}
\end{theorem}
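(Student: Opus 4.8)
Since this theorem collects results established in \cites{MR3852835,MR4210287,MR4611402}, what follows is an outline of the three (essentially independent) arguments rather than a self-contained proof. For the first assertion, the plan is a continuity method in the parameter $\tau$ interpolating between the Ricci and Schouten tensors, as in \eqref{modifiedschouten}: for fixed $\tau\in[0,1)$ the modified tensor $A^{\tau}_{g_w}$ weakens the scalar-curvature contribution, and the resulting equation admits a priori $C^{0}$, $C^{1}$, and $C^{2}$ estimates by a maximum-principle argument in the spirit of Gursky and Viaclovsky \cite{MR1976082}; together with the continuity method or Leray--Schauder degree theory this produces a smooth solution $w_{\tau}$ for every $\tau<1$. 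The crucial point is that the $C^{0}$ and $C^{1}$ bounds on $w_{\tau}$ are \emph{uniform} as $\tau\to 1$ — these are the global $C^{0}$ and $C^{1}$ estimates of \cite{MR1976082} for \eqref{sigmakproblem} itself — while the $C^{2}$ bounds are not. Hence a subsequence $w_{\tau}$ converges in $C^{0,\alpha}_{\mathrm{loc}}$ to a locally Lipschitz limit $w$, and one verifies that $w$ is a viscosity solution of \eqref{sigmakproblem} by the standard stability of viscosity sub/supersolutions under locally uniform limits: if $\varphi\in C^{2}$ touches $w$ strictly from above at $x_{0}$, a small perturbation of $\varphi$ touches $w_{\tau}$ from above at some $x_{\tau}\to x_{0}$, the classical inequality of the modified equation at $x_{\tau}$ holds, and letting $\tau\to 1$ (and removing the perturbation) yields the subsolution inequality for $w$; the supersolution inequality is analogous. \textbf{The main obstacle — and the reason this produces only a Lipschitz, not a classical, solution — is precisely the loss of the $C^{2}$ estimate as $\tau\to 1$; no such estimate is known for \eqref{sigmakproblem}, which is exactly what makes the regularity question of the present paper nontrivial.}

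For the second assertion, existence is by Perron's method built on the comparison principle for sub- and supersolutions of \eqref{sigmakproblem} from \cite{MR3813247}: let $w$ be the supremum of all viscosity subsolutions lying below a fixed supersolution; the comparison principle makes this family nonempty and the supremum finite in the interior, and the usual bump-function and sup-convolution arguments show $w$ is simultaneously a viscosity sub- and supersolution. The boundary blow-up $w\to\infty$ is obtained from local barriers built out of the explicit radial solutions on balls and half-space slabs: since $\partial\Omega$ is smooth it satisfies uniform interior and exterior sphere conditions, which trap $w$ between barriers blowing up at $\partial\Omega$. In a collar of $\partial\Omega$ the equation becomes uniformly elliptic with smooth data, so elliptic regularity up to the boundary (after the change of variable straightening the blow-up) upgrades $w$ to $C^{\infty}$ near $\partial\Omega$. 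Uniqueness follows by applying the comparison principle on the domains $\{d(\cdot,\partial\Omega)>\varepsilon\}$ and sending $\varepsilon\to 0$.

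For the third assertion, argue by contradiction. A classical solution is a fortiori a viscosity solution, so by the uniqueness just established it must coincide with the unique viscosity solution $w$ of the $\sigma_{k}$-Loewner--Nirenberg problem on $\Omega$; it therefore suffices to show this $w$ is not of class $C^{2}$. When $\Omega$ is an annulus this is immediate from the explicit radial viscosity solution of \cite{MR4210287}, which is singular on a central sphere and smooth elsewhere. For general $\Omega$ whose complement has a bounded component $D$, choose $p\in D$, a small ball $B_{r}(p)\subset D$, and a large ball $B_{R}(p)\supset\overline{\Omega}$, so that $\Omega\subset A:=B_{R}(p)\setminus\overline{B_{r}(p)}$, and tune $r$ small and $R$ large so that the central singular sphere $\{|x-p|=\sqrt{rR}\}$ of the annular solution $W_{A}$ meets $\Omega$ while staying off $\partial\Omega$. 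Since $W_{A}$ is a viscosity subsolution on $\Omega$ and $w=+\infty\ge W_{A}$ on $\partial\Omega$, the comparison principle forces $w\ge W_{A}$ on $\Omega$, and one extracts from this — together with the sharp asymptotics of $W_{A}$ near its singular sphere — that $w$ cannot be $C^{2}$ there. \textbf{Making this last step rigorous (quantifying the singularity transplanted from $W_{A}$, and applying the comparison principle up to the singular set of $W_{A}$) is the delicate part}, and is the content of \cite{MR4611402}.
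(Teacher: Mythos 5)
This theorem is stated in the paper purely as a citation of results from \cite{MR4210287} and \cite{MR4611402}; the paper offers no proof of its own, so there is nothing in-paper to compare against. That said, your outlines of parts (1) and (2) are consistent with the constructions the paper attributes to those references (the continuity method in $\tau$ through the modified Schouten tensor \eqref{modifiedschouten}, with uniform $C^0$ and $C^1$ but not $C^2$ bounds as $\tau\to 1$; and Perron's method built on the comparison principle of \cite{MR3813247}, with blow-up barriers from interior/exterior sphere conditions).

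Your sketch of part (3), however, contains a genuine logical gap that you partly flag but that I want to make concrete. The comparison principle gives $w\ge W_A$ on $\Omega$, where $W_A$ is the explicit annular solution. But $W_A$'s singularity at the central sphere $\{r=\sqrt{ab}\}$ is a \emph{convex} Lipschitz kink: as recorded in \cite{MR4210287}, the radial derivative jumps upward from $-2/\sqrt{ab}$ on the left to $0$ on the right. A $C^2$ function can lie above a convex kink while touching it or staying strictly above it (exactly as $x\mapsto x^2+c$ lies above $|x|$), so the one-sided bound $w\ge W_A$ carries no obstruction whatsoever to $w$ being $C^2$ across that sphere. To transplant the singularity you would need either a two-sided pinching by barriers with matching singular spheres, or an argument that directly exploits the structure of the equation (which is what \cite{MR4611402} actually does; their nonexistence proof is not a one-sided comparison against an annular subsolution). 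As written, the step ``hence $w$ cannot be $C^2$ there'' does not follow from what precedes it.
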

Given \autoref{li2021ath}\eqref{li2021ath3}, our result \autoref{themaintheorem} provides further motivation for studying viscosity solutions to \eqref{sigmakboundarytoinfty} on conformally flat manifolds when $k>n/2$ and $R=1$. \par 
In \cite{MR4956437}, {Duncan and Nguyen} remove the assumption on the metric of the manifold in \autoref{li2021ath}\eqref{li2021ath2} for $k < n/2$; see \autoref{nonkadmissible} for details. In a later work \cite{MR4978553}, {Duncan and Wang} study a problem similar to \eqref{sigmakproblem} on non-compact manifolds and prove some existence results.\par 
For an approach different from the viscosity solution, see \cite{MR4654029}, in which {Duncan} studies classical solutions $w$ to \eqref{sigmakproblem} and proves
\begin{equation*}
\Vert \D^2 w\Vert _{L^\infty(\Omega^\prime)} \leq C(\Vert \D^2 w\Vert _{L^p(\Omega)}, \Omega,\Omega^\prime,p,k,M,g),
\end{equation*}
for some $\Omega^\prime \Subset \Omega \subset M$ and $p$ depending on $n$ and $k$. Also see \cite{MR4293879}, in which {Duncan and Nguyen} prove local pointwise second derivative estimates for strong $W^{2,p}$ solutions to the $\sigma_k$-Yamabe problem in both the positive and negative cone cases. 

\section{Set up and Technical Tools}\label{seclocalsolution}
To study \eqref{sigmakproblem}, we start with a more general type of PDE. \par 
Let $B_1 = \{|x| <1; x \in \mathbb{R}^n\}$. Let $\text{Sym}_n$ be the space of $n$ by $n$ symmetric matrices, and $\text{Sym}_n^+$ be the space of positive definite symmetric matrices,
\begin{equation}
	\begin{aligned}
		\text{Sym}_n&=\{M=(M_{ij})_{n\times n}; M_{ij}=M_{ji}\},\\
			\text{Sym}_n^+&=\{M>0; M \in \text{Sym}_n\}.
	\end{aligned}
\end{equation}
Let $\lambda \colon \text{Sym}_n \to \mathbb{R}^n$ be a map that sends a symmetric matrix to its eigenvalues.
Let $(f, \Gamma)$ be a pair such that 
\begin{equation}\label{conecondition}
	\left\{
\begin{aligned}
	&\Gamma \subset \mathbb{R}^n\text{ is an open symmetric cone with vertex at the origin},\\
	& \Gamma \supset \Gamma+\Gamma_n^+=\{\lambda+\mu; \lambda\in \Gamma, \mu \in \Gamma_n^+\},\\ 
	& f \in C^{2,1}(\Gamma) \cap C^0(\overline{\Gamma}),\\
	&f \text{ is homogeneous of degree one},\\
	& f>0 \text{ in }\Gamma,\quad   \pdv{f }{\lambda_i}>0 \text{ in } \Gamma, \quad f=0 \text{ on }\partial \Gamma. 
\end{aligned}\right.
\end{equation}
See \cites{MR3813247,MR4611402,chu2023liouville} for similar setups.
By \autocites[][]{MR113978}[][Proposition 2.2]{MR1976082}
,
 the pair $(\sigma_k^{1/k}, \Gamma_k^+)$ satisfies the above conditions.\par 
In this section, we study the following type of PDE, 
\begin{equation}\label{moregeneralL}\left\{
\begin{aligned}
	f\bigl(\lambda\bigl( W(x) \circ (\D^2 w + L(x,\D w)) \circ W(x)  \bigr)\bigr) &= R(x)e^{2w}>0,\\
	\lambda( W(x) \circ (\D^2 w + L(x,\D w))\circ W(x)  ) & \in \Gamma,
\end{aligned}\right.
\end{equation}
where $L\in C^{2,1}(\overline{B}_1 \times \mathbb{R}^n,\text{Sym}_n)$, $W\in C^{2,1}(\overline{B}_1,\text{Sym}_n^+)$. We use $\D$ to denote the derivative taken under the flat metric. We use $\circ$ to denote matrix composition. In addition, we assume the following structure condition on $L$.
\begin{equation}\label{structurealconditionsoinlnL}\left\{
\begin{aligned}
	L(x,p)&=L_0(x,p)+ L_1(x,p)+ L_2(x,p), \\
	L_k(x,rp)&= r^k L_k(x,p), \quad k=0,1,2.
\end{aligned}\right.
\end{equation}\par 
The $\sigma_k$-equation \eqref{sigmakproblem} is a special case of \eqref{moregeneralL}. Fix local coordinates $x=(x^1,\cdots, x^n)$, let $g_{ij}=g(\partial x^i, \partial x^j)$, we take $W_{ij}(x)=\sqrt{g^{ij}(x)}$, and
\begin{equation}\label{Lforschouten}\left\{
\begin{aligned}
	L_{0,ij}(x,p) &= -(A_g)_{ij}(x), \\
	L_{1,ij}(x,p) &= -\Gamma_{ij}^k(x) p_k, \\
	 L_{2,ij}(x,p)&=  - p_i \otimes p_j + \frac{1}{2} (g^{kl}(x) p_kp_l) g_{ij}(x),
\end{aligned}\right.
\end{equation}
where $A_g$ is the Schouten Curvature tensor of $(B_1,g)$, $\Gamma_{ij}^k$ are the Christoffel symbols induced by $g$, and $(g^{kl})$ is the inverse matrix of $(g_{ij})$, so that $g^{il}g_{lj}=\delta_{ij}$.
 \par 
For simplicity, we denote  
\begin{equation*}
L(\cdot, \D w)(x_0)= L(x_0,\D w(x_0)),
\end{equation*}
for $w \in C^1(\overline{B}_1)$ and $x_0 \in \overline{B}_1$. 

\begin{definition}\cite{MR3813247}\label{defnviscositysolution}
	$w \in C(B_1)$ is a viscosity subsolution to \eqref{moregeneralL} if for all $x_0 \in B_1$, for all $\varphi \in C^2$ and for all neighborhoods $U$ of $x_0$ with 
	\begin{align}
		\varphi(x_0)&= w(x_0), & \varphi \geq w \text{ in }U,
	\end{align}
	there holds, 
	\begin{equation*}
		\begin{aligned}
			f\bigl[\lambda\bigl(W \circ (\D^2 \varphi + L(\cdot ,\D \varphi))\circ W  \bigr)\bigr](x_0) \geq  R(x_0) e^{2\varphi(x_0)},& \text{ and }\\
		\lambda\bigl( W \circ (\D^2 \varphi + L(\cdot ,\D \varphi))  \circ W)(x_0) \in \Gamma. &
		\end{aligned}
	\end{equation*} \par 
	$w \in C(B_1)$ is a viscosity supersolution to \eqref{moregeneralL} if for all $x_0 \in B_1$, for all $\varphi\in C^2$ and for all neighborhoods $U$ of $x_0$ with
	\begin{align}
		\varphi(x_0)&= w(x_0) & \varphi \leq w \text{ in }U,
	\end{align}
	either 
\begin{equation*}
	\begin{aligned}
		f\bigl[\lambda\bigl( W \circ (\D^2 \varphi + L(\cdot ,\D \varphi)) \circ W \bigr)\bigr](x_0) \leq  R(x_0) e^{2\varphi(x_0)},&\text{ and }\\ 
		\lambda\bigl( W \circ (\D^2 \varphi + L(\cdot ,\D \varphi)) \circ W \bigr)(x_0) \in \Gamma,&
	\end{aligned}
\end{equation*}
	or 
	\begin{equation*}
	\lambda\bigl( W \circ (\D^2 \varphi + L(\cdot ,\D \varphi)) \circ W  \bigr)(x_0) \in \mathbb{R}^n \setminus \Gamma.
	\end{equation*}\par 
	$w \in C(B_1)$ is a viscosity solution to \eqref{moregeneralL} if it is both a viscosity subsolution and a viscosity supersolution to \eqref{moregeneralL}.
\end{definition}
\begin{definition}\cite{MR1351007}
A paraboloid is a polynomial of degree $2$.
$w$ is punctually second-order differentiable at $x_0$ if there exists a paraboloid $P$ such that 
\begin{equation}
	w(x)=P(x)+o(|x-x_0|^2),
\end{equation}
as $x \to x_0$. In this case, we define $\D w(x_0)=\D P(x_0)$ and $\D^2 w(x_0)=\D^2 P(x_0)$.
\end{definition}
 
Our work is based on the following observation. 
\begin{lemma}\label{punctuallysecondincone}
Let $w$ be a viscosity solution to \eqref{moregeneralL}. If $w$ is punctually second-order differentiable at $x_0$, then
\begin{equation*}
	\lambda\bigl( W \circ (\D^2 w + L(\cdot ,\D w) )\circ W \bigr)(x_0) \in \Gamma.
\end{equation*}  Hence,
\begin{equation*}
f\bigl[\lambda\bigl( W \circ (\D^2 w + L(\cdot ,\D w))  \circ W \bigr)\bigr](x_0) = R(x_0) e^{2w(x_0)}.
\end{equation*} 
\end{lemma}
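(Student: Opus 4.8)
The plan is to exploit the fact that punctual second-order differentiability at $x_0$ produces, for every $\varepsilon>0$, a touching paraboloid from above and a touching paraboloid from below whose Hessians differ by only $O(\varepsilon)$. First I would fix the paraboloid $P$ with $w(x)=P(x)+o(|x-x_0|^2)$. For $\varepsilon>0$ set $\varphi_\varepsilon^{\pm}(x)=P(x)\pm \varepsilon|x-x_0|^2$ (plus a harmless constant adjustment if needed so that $\varphi_\varepsilon^{\pm}(x_0)=w(x_0)$; note $P(x_0)=w(x_0)$ already). By the definition of $o(|x-x_0|^2)$, there is a neighborhood $U_\varepsilon$ of $x_0$ on which $\varphi_\varepsilon^{-}\le w\le\varphi_\varepsilon^{+}$, and these are legitimate $C^2$ test functions touching $w$ at $x_0$. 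At $x_0$ we have $\D\varphi_\varepsilon^{\pm}(x_0)=\D P(x_0)=\D w(x_0)$ and $\D^2\varphi_\varepsilon^{\pm}(x_0)=\D^2 P(x_0)\pm 2\varepsilon I=\D^2 w(x_0)\pm 2\varepsilon I$.

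Next I would feed $\varphi_\varepsilon^{+}$ into the subsolution condition of \autoref{defnviscositysolution}: it gives
\[
\lambda\bigl(W\circ(\D^2 w(x_0)+2\varepsilon I+ L(\cdot,\D w)(x_0))\circ W\bigr)\in\Gamma
\]
and the corresponding inequality $f[\cdots]\ge R(x_0)e^{2w(x_0)}$. Here I use that $W$ and $L(\cdot,\D\varphi)$ evaluated at $x_0$ depend only on $x_0$ and on $\D\varphi(x_0)=\D w(x_0)$, hence coincide with the quantities attached to $w$. Now let $\varepsilon\to 0$: the matrix $W\circ(\D^2 w(x_0)+2\varepsilon I+L)\circ W$ converges to $M_0:=W\circ(\D^2 w(x_0)+L(\cdot,\D w)(x_0))\circ W$, so its eigenvalue vector converges to $\lambda(M_0)$; since $\Gamma$ is open this only gives $\lambda(M_0)\in\overline\Gamma$. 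To upgrade to $\lambda(M_0)\in\Gamma$ I use the cone structure \eqref{conecondition}: for each $\varepsilon$, $W\circ(\D^2 w(x_0)+L)\circ W = M_0$ and the shifted matrix equals $M_0 + 2\varepsilon\, W\circ W$, which is $M_0$ plus a positive definite matrix, so its eigenvalues equal $\lambda(M_0)$ plus something in $\Gamma_n^+$ up to the usual eigenvalue perturbation — more cleanly, I'd argue that $\lambda(M_0)\in\overline\Gamma$ together with $\lambda(M_0 + 2\varepsilon W\circ W)\in\Gamma$ for all small $\varepsilon>0$ forces $\lambda(M_0)\in\Gamma$ because $\overline\Gamma+\Gamma_n^+\subset\Gamma$ (equivalently, $\Gamma$ contains a neighborhood of each of its closure points that lies "inward"). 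This is the one place requiring a small argument rather than a direct limit.

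Having established $\lambda(M_0)\in\Gamma$, I then run the supersolution condition with $\varphi_\varepsilon^{-}$: since $\lambda(M_0-2\varepsilon W\circ W)$ is close to $\lambda(M_0)\in\Gamma$, for $\varepsilon$ small it still lies in $\Gamma$ (openness), so the second alternative $\lambda(\cdots)\in\mathbb R^n\setminus\overline\Gamma$ of \autoref{defnviscositysolution} is impossible and we are in the first alternative: $f[\lambda(M_0-2\varepsilon W\circ W)]\le R(x_0)e^{2w(x_0)}$. Letting $\varepsilon\to 0$ and using $f\in C^0(\overline\Gamma)$ (indeed $C^1(\Gamma)$) gives $f[\lambda(M_0)]\le R(x_0)e^{2w(x_0)}$. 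Combined with the reverse inequality from the subsolution step (also passed to the limit via continuity of $f$), we get equality $f[\lambda(M_0)]=R(x_0)e^{2w(x_0)}$, which is exactly the claim.

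The main obstacle is the upgrade from $\lambda(M_0)\in\overline\Gamma$ to $\lambda(M_0)\in\Gamma$; everything else is a routine application of the viscosity definition plus continuity. I expect that step to be short once one invokes the monotonicity/cone property $\overline\Gamma+\Gamma_n^+\subset\Gamma$ from \eqref{conecondition}, together with the fact that conjugation by the fixed positive-definite matrix $W(x_0)$ preserves positive-definiteness so that $M_0+2\varepsilon\,W(x_0)\circ W(x_0)$ genuinely dominates $M_0$ in the matrix order, giving an eigenvalue shift into $\Gamma_n^+$.
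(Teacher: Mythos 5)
Your setup matches the paper's, and the second half of your argument (using $\varphi_\varepsilon^{-}$ and the supersolution alternative to squeeze out the equality $f[\lambda(M_0)]=R(x_0)e^{2w(x_0)}$) is a clean way to finish, more explicit than the paper itself. But the step you flag as "the main obstacle," the upgrade from $\lambda(M_0)\in\overline\Gamma$ to $\lambda(M_0)\in\Gamma$, is done with a wrong argument: the cone property $\overline\Gamma+\Gamma_n^+\subset\Gamma$ does not force $\lambda(M_0)\in\Gamma$. Indeed $\lambda(M_0)\in\partial\Gamma$ together with $\lambda(M_0+2\varepsilon W\circ W)\in\Gamma$ for all small $\varepsilon>0$ is perfectly consistent; take $\Gamma=\Gamma_1^+$ and $\lambda(M_0)=0$, say, so $\lambda(M_0)+\varepsilon(1,\dots,1)\in\Gamma_1^+$ for every $\varepsilon>0$ while $0\in\partial\Gamma_1^+$. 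The fact that a sequence in the open cone $\Gamma$ converges to $\lambda(M_0)$ only gives membership in $\overline\Gamma$; no amount of cone structure alone rules out the boundary.

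The ingredient you are missing, which the paper does use, is the quantitative lower bound on $f$ and the structural fact in \eqref{conecondition} that $f=0$ on $\partial\Gamma$. The subsolution test with $\varphi_\varepsilon^{+}$ gives not only $\lambda(M_0+2\varepsilon W\circ W)\in\Gamma$ but also
\begin{equation*}
 f\bigl[\lambda\bigl(M_0+2\varepsilon\, W(x_0)\circ W(x_0)\bigr)\bigr]\geq R(x_0)e^{2w(x_0)}>0
\end{equation*}
for every $\varepsilon>0$. Since $f\in C^0(\overline\Gamma)$, letting $\varepsilon\to0$ gives $f[\lambda(M_0)]\geq R(x_0)e^{2w(x_0)}>0$. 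As $f$ vanishes identically on $\partial\Gamma$ by \eqref{conecondition}, this excludes $\lambda(M_0)\in\partial\Gamma$, hence $\lambda(M_0)\in\Gamma$. This is exactly the point you already invoke later when passing the inequality to the limit; you just need to recognize that it, and not the cone property, is what performs the upgrade. With that correction your proof is complete and in fact fills in the supersolution half that the paper leaves implicit.
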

\begin{proof}
	Let $P$ be the paraboloid such that 
	\begin{equation}\label{punctuallysecondorderwde}
		w(x)=P(x)+o(|x-x_0|^2),
	\end{equation}
as $x \to x_0$.
Then by \eqref{punctuallysecondorderwde}, for all $\epsilon>0$ there exists a neighborhood $U$ of $x_0$ such that
\begin{align}
 P(x_0)&= w(x_0) &  P_{\epsilon}(x)\coloneqq P(x)+ \epsilon |x-x_0|^2 \geq w  \text{ in }U. \label{punctuallysecondorderpushup}
\end{align}
Hence by \autoref{defnviscositysolution} and \eqref{punctuallysecondorderpushup},
\begin{equation}\label{punctuallysecondorderpushupse}
\left\{\begin{aligned}
	f\bigl(\lambda\bigl(W \circ \bigl(\D^2 P_\epsilon+ L(\cdot,\D P_\epsilon )\bigr) \circ W\bigr)\bigr)(x_0) &\geq R(x_0) e^{2w(x_0)},\\
\lambda\bigl(W \circ \bigl(\D^2 P_\epsilon+ L(\cdot,\D P_\epsilon) \bigr) \circ W\bigr)(x_0) &\in \Gamma.
\end{aligned}\right.
\end{equation}
Pushing $\epsilon \to 0$, we conclude from \eqref{punctuallysecondorderpushupse} that 
\begin{equation}\label{punctuallysecondorderpushupcon}
	\left\{\begin{aligned}
		f\bigl(\lambda\bigl(W \circ \bigl(\D^2 P+ L(\cdot,\D P )\bigr) \circ W\bigr)\bigr)(x_0) &\geq R(x_0) e^{2w(x_0)},\\
		\lambda\bigl(W \circ \bigl(\D^2 P+ L(\cdot,\D P)\bigr) \circ W\bigr)(x_0) &\in \overline{\Gamma}.
	\end{aligned}\right.
\end{equation}
Since $R(x_0)>0$ and $f=0$ on $\partial \Gamma$ by \eqref{conecondition}, we conclude that 
\begin{equation}\label{liestrictlyincone}
\lambda\bigl(W \circ \bigl(\D^2 P+ L(\cdot,\D P)\bigr) \circ W\bigr)(x_0) \in \Gamma.
\end{equation}\par 
By \eqref{punctuallysecondorderwde}, for all $\epsilon>0$ there exists a neighborhood $U$ of $x_0$ such that
 \begin{align*}
 	P(x_0)&= w(x_0) &  P_{-\epsilon}(x)\coloneqq P(x)- \epsilon |x-x_0|^2 \leq w  \text{ in }U. 
 \end{align*}
By \eqref{liestrictlyincone}, for $\epsilon > 0$ small,
\begin{equation*}
\lambda\bigl(W \circ \bigl(\D^2 P_{-\epsilon}+ L(\cdot,\D P_{-\epsilon})\bigr) \circ W\bigr)(x_0) \in \Gamma.
\end{equation*}
Hence by \autoref{defnviscositysolution},
\begin{equation*}
f\bigl(\lambda\bigl(W \circ \bigl(\D^2 P_{-\epsilon}+ L(\cdot,\D P_{-\epsilon} )\bigr) \circ W\bigr)\bigr)(x_0) \leq R(x_0) e^{2w(x_0)}.
\end{equation*}
Pushing $\epsilon \to 0$, we get 
\begin{equation}\label{punctuallysecondorderpushupdown}
f\bigl(\lambda\bigl(W \circ \bigl(\D^2 P+ L(\cdot,\D P )\bigr) \circ W\bigr)\bigr)(x_0) \leq R(x_0) e^{2w(x_0)}.
\end{equation}
\eqref{punctuallysecondorderpushupcon} and \eqref{punctuallysecondorderpushupdown} together give us the equality 
\begin{equation*}
f\bigl(\lambda\bigl(W \circ \bigl(\D^2 P+ L(\cdot,\D P )\bigr) \circ W\bigr)\bigr)(x_0) = R(x_0) e^{2w(x_0)},
\end{equation*}
completing the proof.
\end{proof}
\autoref{punctuallysecondincone} indicates that \eqref{moregeneralL} is elliptic at points where the viscosity solution $w$ is punctually second-order differentiable. \par 
\autoref{Savinisgod} is our main technical tool. We use \autoref{Savinisgod} to show that if a viscosity solution $w$ to \eqref{sigmakproblem} is punctually second-order differentiable at a point, then $w$ is $C^{2,\alpha}$ in a neighborhood of that point. See \autoref{localregularitysection} for more.
\begin{theorem}\cite[Theorem 1.3]{MR2334822}\label{Savinisgod}
	Suppose $F\colon \text{Sym}_n \times \mathbb{R}^n \times \mathbb{R} \times B_1 \to \mathbb{R}$ satisfies the following hypothesis for $|p| \leq \delta$, $|z| \leq \delta$,
	\begin{enumerate}[label=H\arabic*]
		\item \label{SavinP0} $F(\cdot ,p,z,x)$ is elliptic, i.e., 
		\begin{equation*}
		F(M+N,p,z,x) \geq F(M,p,z,x)
		\end{equation*}
	if $N \geq 0$.
		\item \label{SavinP1} $F(M,p,z,x)$ is uniformly elliptic in a $\delta$-neighborhood of the origin with ellipticity constants $\Lambda \geq \lambda > 0$, i.e., 
		\begin{equation*}
			\Lambda \Vert N \Vert \geq F(M+N,p,z,x)-F(M,p,z,x) \geq \lambda \Vert N \Vert
		\end{equation*}
		when $N \geq 0$, $\Vert M\Vert , \Vert N \Vert \leq \delta$, $x \in B_{1}$.
		\item \label{SavinP2.5} $F(0,0,0,x)=0$.
		\item \label{SavinP2}$F \in C^2$ and $\Vert \D F \Vert ,\Vert \D^2 F\Vert  \leq K$ in a $\delta$-neighborhood of the set 
		\begin{equation*}
			\{(0,0,0,x); x \in B_{1}\}.
		\end{equation*}
	\end{enumerate}\par 
Let $u$ be a viscosity solution to 
\begin{equation*}
F(\D^2 u, \D u, u, x)=0.
\end{equation*}
Then there is a constant $c_1$ depending on $\delta$, $K$ and the universal constants $n,\lambda,\Lambda$ such that  
\begin{equation*}
	|u|_{0,B_1} \leq c_1,
\end{equation*}
implies $u \in C^{2,\alpha}(B_{1/2})$, and 
\begin{equation*}
|u|_{2,\alpha;B_{1/2}} \leq \delta. 
\end{equation*}
\end{theorem}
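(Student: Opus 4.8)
The plan is to prove the theorem, following the argument of Savin, by an improvement‑of‑approximation iteration: I would show that a small viscosity solution is, at every dyadic scale about a fixed base point, approximable by a quadratic polynomial with geometrically decaying error, and then sum those polynomials. First I would freeze coefficients at the origin: set $a_{ij}:=F_{M_{ij}}(0,0,0,0)$ (the derivative of $F$ in the Hessian slot $M_{ij}$ at the origin), so that $L_0 v:=a_{ij}\partial_{ij}v$ is a constant‑coefficient uniformly elliptic operator with the same constants $\lambda,\Lambda$ by \ref{SavinP1}; then \ref{SavinP2.5} together with the $C^2$‑bound \ref{SavinP2} gives $F(M,p,z,x)=a_{ij}M_{ij}+b_i p_i+cz+E(M,p,z,x)$ with $\abs{E}\le CK(\abs{M}+\abs{p}+\abs{z}+\abs{x})^2$ on the $\delta$‑neighborhood of the zero set, so that a solution of size $\epsilon$ satisfies $L_0 u+(\text{lower order})=O(K\epsilon^2)$; that is, the equation linearizes as $\epsilon\to 0$.

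The heart of the argument, and the step I expect to be the main obstacle, is a compactness/approximation lemma: for every $\eta>0$ there should exist $\epsilon_1=\epsilon_1(\eta,n,\lambda,\Lambda,K,\delta)>0$ such that if $u$ is a viscosity solution with $\abs{u}_{0,B_1}\le\epsilon_1$, then there is a solution $h$ of the constant‑coefficient linear problem $L_0 h+(\text{lower order})=0$ in $B_{1/2}$ with $\abs{h}_{0,B_{1/2}}\le\abs{u}_{0,B_1}$ and $\abs{u-h}_{0,B_{1/2}}\le\eta\,\abs{u}_{0,B_1}$. The difficulty is that $F$ is uniformly elliptic only in a $\delta$‑neighborhood of $\{(0,0,0,x)\}$, so I cannot invoke Krylov--Safonov globally, nor normalize $\abs{u}_0=1$ as in the classical Caffarelli scheme. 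The way around this is a measure estimate in the spirit of the ABP/Caffarelli--Cabr\'e $W^{2,\epsilon}$ argument: a solution trapped between $\pm\epsilon$ in $B_1$ must be touched, from above or from below, on a subset of $B_{1/2}$ of definite measure, by paraboloids of opening $O(\epsilon)$ --- and here is the point, since $\abs{u}\le\epsilon$ every touching paraboloid and every test Hessian that occurs has size $O(\epsilon)\le\delta$, so the contact‑set and cube‑decomposition estimates are carried out entirely where $F$ is uniformly elliptic. Iterating this yields that the subset of $B_{1/2}$ where $u$ is not touched by a paraboloid of opening $t\,\abs{u}_0$ has measure at most $Ct^{-\epsilon_0}$, hence H\"older equicontinuity of $u/\abs{u}_0$; one then lets $\epsilon_1\to 0$ along a sequence, using that uniform limits of viscosity solutions are viscosity solutions and that the rescaled operators converge locally uniformly to $L_0$, to extract $h$.

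Granting the lemma, the remaining steps are routine. Since $L_0$ has constant coefficients, $h$ is smooth with interior estimates, so $\abs{h-P}_{0,B_r}\le C_*\abs{h}_0 r^3$ for $r\le\tfrac14$, where $P$ is the second‑order Taylor polynomial of $h$ at $0$, with $\abs{P(0)}+\abs{\D P(0)}+\abs{\D^2 P(0)}\le C_*\abs{u}_{0,B_1}$. Fixing $\alpha\in(0,1)$, choosing $\rho\le\tfrac14$ with $C_*\rho^3\le\tfrac12\rho^{2+\alpha}$, and applying the lemma with $\eta=\tfrac12\rho^{2+\alpha}$ produces a quadratic $P$ with $\abs{u-P}_{0,B_\rho}\le\rho^{2+\alpha}\abs{u}_{0,B_1}$ and coefficients bounded by $C_*\abs{u}_{0,B_1}$. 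I would then rescale $u_1(x):=\rho^{-(2+\alpha)}\bigl(u(\rho x)-P(\rho x)\bigr)$, which satisfies $\abs{u_1}_{0,B_1}\le\abs{u}_{0,B_1}\le\epsilon_1$ and solves $F_1(\D^2 u_1,\D u_1,u_1,x)=0$, where
\[
F_1(N,q,s,x):=\rho^{-\alpha}F\bigl(\rho^\alpha N+\D^2 P,\ \rho^{1+\alpha}q+\D P(\rho x),\ \rho^{2+\alpha}s+P(\rho x),\ \rho x\bigr).
\]
One checks that $F_1$ again satisfies \ref{SavinP0}--\ref{SavinP2} with the same $\lambda,\Lambda$ and a controlled $K$: the dilation $x\mapsto\rho x$ only improves the $C^2$‑bound, the shifts by $\D^2 P,\D P,P$ are of size $O(\abs{u}_0)\le\delta$ so stay inside the neighborhood of the zero set, and $F_1(0,0,0,x)$, though no longer exactly zero, is small (controlled through $K$ and the smallness of the coefficients of $P$) and is carried along as a small right‑hand side to which the estimates of the previous two steps are robust --- so one sets those steps up for operators allowing such a source. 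Iterating produces quadratics whose coefficients, pulled back to the original scale, form Cauchy sequences with step‑$m$ correction $\le C\rho^{m\alpha}\abs{u}_{0,B_1}$; the limit $P_\infty$ gives $\abs{u(x)-P_\infty(x)}\le C\abs{u}_{0,B_1}\abs{x}^{2+\alpha}$ near $0$, with $\abs{\D^2 P_\infty(0)}\le C\abs{u}_{0,B_1}$.

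Finally, since the base point was arbitrary, I would repeat the above at every $x_0\in B_{1/2}$ (working on $B_{1/2}(x_0)\subset B_1$, with a routine covering to handle the shrinking balls), obtaining a pointwise $C^{2,\alpha}$ expansion at each point of $B_{1/2}$ with uniform constant $C\abs{u}_{0,B_1}$, and upgrade this by the standard Campanato‑type characterization to $u\in C^{2,\alpha}(B_{1/2})$ with $\abs{u}_{2,\alpha;B_{1/2}}\le C\abs{u}_{0,B_1}$; taking $c_1:=\delta/C$ then yields the stated conclusion. The only genuinely hard step is the $W^{2,\epsilon_0}$‑type measure estimate of the second paragraph under merely \emph{local} uniform ellipticity --- everything else reduces to interior estimates for $L_0$, the algebra of the rescaling, and a covering argument; what makes that step possible is precisely that smallness of $\abs{u}_0$ confines all relevant paraboloids and Hessians to the region where $F$ is controlled.
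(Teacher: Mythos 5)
This theorem is quoted verbatim from Savin \cite{MR2334822}*{Theorem 1.3} and is used in the paper as a black box, so there is no internal proof to compare against. Your sketch is a faithful reconstruction of Savin's original argument: the same three ingredients appear in the same roles --- a sliding-paraboloid measure estimate and resulting H\"older/Harnack inequality for \emph{flat} solutions (valid precisely because $\abs{u}_{0,B_1}\le\epsilon_1$ forces all contact paraboloids and test Hessians into the $\delta$-neighborhood where $F$ is uniformly elliptic), a compactness approximation against the frozen-coefficient linear operator $L_0$, and an improvement-of-flatness iteration in which the rescaled operator acquires a small inhomogeneity since $F_1(0,0,0,x)$ is no longer exactly zero. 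You have correctly isolated the genuinely delicate step (the ABP/cube-decomposition machinery under merely local uniform ellipticity) and the observation that makes it go through.
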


\autoref{kconvexalexdandrofftype} is an extension of the Alexandroff-Buselman-Feller theorem.
\begin{definition}\label{kconvexitydefn}\cite{MR2133413} 
A function $w \in C(B_1)$ is $k$-convex in the viscosity sense, if for any $x_0 \in B_1$ and any paraboloid $\varphi$ with
\begin{align}
	\varphi(x_0)&= w(x_0) & \varphi \geq w \text{ in a neighborhood of }x_0,
\end{align}
 we have 
	\begin{equation}\label{kconvexequation}
		\lambda(\D^2 \varphi)	(x_0) \in \overline{\Gamma}^{+}_k.
	\end{equation}
\end{definition}
\begin{theorem}\cite[Theorem 1.1]{MR2133413}\label{kconvexalexdandrofftype}
	For $k> n/2$, a $k$-convex function $w \colon B_1 \subset \mathbb{R}^n \to \mathbb{R}$ is punctually second-order differentiable almost everywhere.   
\end{theorem}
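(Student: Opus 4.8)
\emph{Proof plan.} The plan is to adapt the classical proof of Alexandrov's theorem for convex functions, using the hypothesis $k>n/2$ only at the last step, in order to pass from \emph{approximate} to \emph{punctual} second differentiability. The statement being local, I would first recall from the Trudinger--Wang theory of $k$-Hessian measures that $w$ is a locally uniform limit of smooth functions $w_m$ with $\lambda(\D^2 w_m)\in\Gamma_k^+$, that for $k>n/2$ such a $w$ is continuous, and that the positive measures $\sigma_k(\D^2 w_m)\,dx$ and $\Delta w_m\,dx$ converge, in the sense of Radon measures, to locally finite limits $\mu_k[w]$ and $\Delta w\ge 0$.

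Next I would observe that $\D^2 w$ is itself a matrix-valued Radon measure, dominated by $\Delta w$. Since $n\ge 3$ and $k>n/2$ force $k\ge 2$, we have $\overline{\Gamma_k^+}\subseteq\overline{\Gamma_2^+}$, and on $\overline{\Gamma_2^+}$ one has $|\lambda|^2=\sigma_1(\lambda)^2-2\sigma_2(\lambda)\le\sigma_1(\lambda)^2$; applied to the symmetric matrices $\D^2 w_m$ this gives $|\D^2 w_m|\le\Delta w_m$ pointwise, and a standard lower semicontinuity argument gives $|\D^2 w|\le\Delta w$ as measures. Hence $\D w\in W^{1,1}_{loc}$ is of bounded variation, with Lebesgue decomposition $\D^2 w=f\,dx+\nu$, $f\in L^1_{loc}$, $\nu$ singular. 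The sole contribution of the stronger hypothesis $k>n/2$ is the improved integrability $f\in L^p_{loc}$ for some $p>n/2$, which the Trudinger--Wang Sobolev-type estimates for $k$-convex functions supply exactly in that range.

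I would then fix a point $x_0$ — a Lebesgue point of $f$ and of $\operatorname{tr}f$, with $|\nu|(B_r(x_0))=o(r^2)$ (true at almost every point, by Besicovitch's density theorem and $2<n$), and a Lebesgue point of $w$ and $\D w$ — and set
\[
	P(x)=w(x_0)+\D w(x_0)\cdot(x-x_0)+\tfrac12\,(x-x_0)^{\top}f(x_0)(x-x_0).
\]
First, the Calder\'on--Zygmund differentiation theorem for $\mathrm{BV}$ gradients gives $\fint_{B_r(x_0)}|\D w-\D P|=o(r)$, whence, by Poincar\'e's inequality and dyadic telescoping (using that $x_0$ is a Lebesgue point of $w$), $\fint_{B_r(x_0)}|w-P|=o(r^2)$. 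Second, $u:=w-P$ solves $\Delta u=(\operatorname{tr}f-\operatorname{tr}f(x_0))\,dx+\operatorname{tr}\nu$ on $B_r(x_0)$; writing $u$ as its harmonic part plus the Green potentials of the two parts of $\Delta u$, the mean value property controls the harmonic part by $\fint_{B_r(x_0)}|u|=o(r^2)$, the bound $p>n/2$ controls the first potential by $C\,r^{2-n/p}\|\operatorname{tr}f-\operatorname{tr}f(x_0)\|_{L^p(B_r(x_0))}=o(r^2)$, and the positivity and vanishing density of $\operatorname{tr}\nu$ dispose of the last one as in the classical argument. This yields $\|w-P\|_{L^\infty(B_{r/2}(x_0))}=o(r^2)$, i.e.\ punctual second-order differentiability at $x_0$, hence at almost every point.

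I expect the main obstacle to be precisely this final $L^\infty$ upgrade. Everything through the average estimate $\fint_{B_r(x_0)}|w-P|=o(r^2)$ is, granting the Trudinger--Wang theory, a transcription of the convex case that uses only $k\ge 2$; the genuinely new work — and the only place where $k>n/2$ rather than merely $k\ge 2$ is needed — is converting this average statement into the pointwise one, which rests on the $L^p$-bound for the absolutely continuous Hessian with $p>n/2$ and on a careful treatment of the singular part $\nu$ near $x_0$, both made possible by the domination $|\D^2 w|\le\Delta w$.
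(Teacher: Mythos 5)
The statement is imported from Chaudhuri--Trudinger (Theorem~4.3 is \cite{MR2133413}*{Theorem 1.1}), and the paper's own version of the argument appears in Proposition~4.7 and Theorem~4.8, which transcribe and slightly extend Chaudhuri--Trudinger's proof. Your first half agrees with that route: $\D w\in \mathrm{BV}_{loc}$, the Lebesgue decomposition $\D^2 w = f\,dx + \nu$, Besicovitch at a.e.\ $x_0$, and the $L^1$-average bound $\fint_{B_r(x_0)}\lvert w-P\rvert = o(r^2)$ via the differentiation-theorem/telescoping argument. The divergence, and the part you correctly flag as the crux, is the upgrade from $L^1$ to $L^\infty$; here you take a genuinely different route, and it has two real gaps.

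First, you assert $f\in L^p_{loc}$ for some $p>n/2$, attributing it to ``Trudinger--Wang Sobolev-type estimates.'' That literature gives $W^{1,q}_{loc}$ bounds on $\D w$ (with exponent $nk/(n-k)$) and local bounds on the Hessian \emph{measures} $\mu_l[w]$, but no $L^p$ estimate on the absolutely continuous part of $\D^2 w$ with an exponent beyond $1$. If such an estimate were available, the Alexandrov-type theorem would be routine potential theory; the fact that Chaudhuri--Trudinger devised a different mechanism is precisely because it is not. You would need to supply a proof of this $L^p$ bound, and I do not see where it would come from.

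Second, even granting the $L^p$ bound on $f$, the Green-potential treatment of the singular part does not close. You write $u_2(x)=\int_{B_r}G_r(x,y)\,d[\operatorname{tr}\nu](y)$ and invoke positivity plus vanishing density of $\operatorname{tr}\nu$ at $x_0$. But vanishing density is a condition at the single point $x_0$: it gives $\operatorname{tr}\nu(B_\rho(x_0))=o(\rho^n)$ and therefore $u_2(x_0)=o(r^2)$ and $\fint_{B_r}u_2=o(r^2)$, but it does not control $u_2(x)$ for $x$ near $x_0$, because $\operatorname{tr}\nu$ may concentrate near such $x$. To bound $\sup_{B_{r/2}}u_2$ by a Newtonian-potential estimate one needs a \emph{uniform} density bound $\operatorname{tr}\nu(B_\rho(x))\leq\epsilon\rho^n$ for all $x$ near $x_0$, which Besicovitch does not provide. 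The ``classical argument'' you refer to for convex functions does not actually proceed by Green potentials: it exploits convexity (monotonicity of $\D w$, or the $L^\infty$--$L^1$ interior bound that convex functions enjoy) to pass from the average bound to the pointwise one, and that structural input is not available for merely $k$-convex $w$.

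What replaces it in Chaudhuri--Trudinger, and in the paper's Proposition~4.7/Theorem~4.8, is the interior weighted H\"older estimate
\begin{equation*}
[\,w\,]^{(n)}_{0,\alpha;\Omega} \;\leq\; C(n,k)\int_\Omega\lvert w\rvert, \qquad 0<\alpha<2-\tfrac{n}{k},
\end{equation*}
proved by comparing $w$ against the barrier $\operatorname{osc}_{B_R}(w)\cdot\lvert x-y\rvert^\alpha/R^\alpha$, whose Hessian lies outside $\overline{\Gamma_k^+}$ precisely when $\alpha<2-n/k$. This is where $k>n/2$ enters: it makes $\alpha>0$ possible. Applying this estimate to $h+\Lambda\lvert x\rvert^2$ (to restore $k$-convexity) gives a H\"older modulus in terms of $\fint\lvert h\rvert$, and a short measure-theoretic argument then converts $\fint_{B_{2r}}\lvert h\rvert=o(r^2)$ into $\sup_{B_r}\lvert h\rvert=o(r^2)$, regardless of what the singular part of $\D^2 w$ does. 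I would recommend replacing the potential-theoretic step of your plan by this comparison-principle H\"older estimate; the first half of your argument can then stay as written.
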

Although \eqref{moregeneralL} and \eqref{kconvexequation} are different equations, we observe that when the viscosity solution $w$ to \eqref{moregeneralL} is Lipschitz, \autoref{kconvexalexdandrofftype} can also be applied to $w$.
\begin{theorem}\label{2convexity}
	Let $\Gamma \subset \Gamma_{k}^+$ and $w \in C^{0,1}(B_1)$ be a viscosity solution to \eqref{moregeneralL}.  If either 
	\begin{enumerate}
		\item  $k > n/2$ and $W(x)= \delta_{ij}$ for all $x \in B_1$, or \label{2convexitycon1}
		\item $k=n$,\label{2convexitycon2}
	\end{enumerate}
then $w$ is punctually second-order differentiable almost everywhere in $B_1$. 
\end{theorem}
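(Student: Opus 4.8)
The plan is to reduce \autoref{2convexity} to \autoref{kconvexalexdandrofftype} by showing that, after adding a sufficiently large smooth convex quadratic, a Lipschitz viscosity solution of \eqref{moregeneralL} becomes $k$-convex in the viscosity sense of \autoref{kconvexitydefn}. Since adding a $C^\infty$ function changes neither punctual second-order differentiability at a point nor the cone condition \eqref{kconvexequation} up to an explicit translation by $C\,\mathrm{Id}$, and since the conclusion is local, it suffices to fix an arbitrary ball $B_\rho(y) \Subset B_1$, let $\Lambda_0$ be a Lipschitz constant for $w$ on $\overline{B_\rho(y)}$, and exhibit a constant $C = C(\Lambda_0,\|L\|_{C^0(\overline{B_\rho(y)}\times\overline{B}_{\Lambda_0})})$ such that $v := w + \tfrac{C}{2}\abs{x}^2$ is $k$-convex in the viscosity sense on $B_\rho(y)$; then \autoref{kconvexalexdandrofftype} shows $v$, hence $w$, is punctually second-order differentiable a.e.\ in $B_\rho(y)$, and a countable covering of $B_1$ by such balls finishes the proof. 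Note that in case \eqref{2convexitycon2} one has $k=n>n/2$, so \autoref{kconvexalexdandrofftype} does apply.

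To verify the $k$-convexity of $v$, let $\psi$ be a paraboloid with $\psi(x_0)=v(x_0)$ and $\psi\geq v$ near $x_0\in B_\rho(y)$. Then $\varphi := \psi - \tfrac{C}{2}\abs{x}^2$ is a paraboloid with $\varphi(x_0)=w(x_0)$ and $\varphi\geq w$ near $x_0$, so the viscosity subsolution half of \autoref{defnviscositysolution} yields
\[
\lambda\bigl(W(x_0)\circ\bigl(\D^2\varphi(x_0) + L(x_0,\D\varphi(x_0))\bigr)\circ W(x_0)\bigr) \in \Gamma \subseteq \Gamma_k^+ .
\]
A standard supergradient estimate gives $\abs{\D\varphi(x_0)}\leq \Lambda_0$: the paraboloid $\varphi$ lies above the $\Lambda_0$-Lipschitz function $w$ and touches it at $x_0$, so testing the inequality along the direction of $\D\varphi(x_0)$ forces the bound. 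Hence, choosing $C$ to be an upper bound for the operator norms of $L(x,p)$ over the compact set $(x,p)\in\overline{B_\rho(y)}\times\overline{B}_{\Lambda_0}$, we get $C\,\mathrm{Id}-L(x_0,\D\varphi(x_0))\succeq 0$. Writing $M := \D^2\varphi(x_0)+L(x_0,\D\varphi(x_0))$, we have $\D^2\psi(x_0) = M + \bigl(C\,\mathrm{Id}-L(x_0,\D\varphi(x_0))\bigr)$, a sum of $M$ and a positive semidefinite matrix.

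It remains to pass from the cone membership of $\lambda(W(x_0)MW(x_0))$ to $\lambda(\D^2\psi(x_0))\in\Gamma_k^+$, and this is exactly where the two hypotheses are used. In case \eqref{2convexitycon2}, $\Gamma_n^+$ is the positive cone $\{\lambda_i>0\ \forall i\}$, so $\lambda(W(x_0)MW(x_0))\in\Gamma\subseteq\Gamma_n^+$ means $W(x_0)MW(x_0)\succ 0$; since $W(x_0)\in\mathrm{Sym}_n^+$ is invertible, Sylvester's law of inertia gives $M\succ 0$, and adding the positive semidefinite matrix $C\,\mathrm{Id}-L(x_0,\D\varphi(x_0))$ keeps it positive definite, so $\lambda(\D^2\psi(x_0))\in\Gamma_n^+$. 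In case \eqref{2convexitycon1}, $W(x_0)=\mathrm{Id}$, so $\lambda(M)\in\Gamma_k^+$ directly, and we invoke the monotonicity of the G\aa rding cone $\Gamma_k^+$ under addition of positive semidefinite matrices (a standard consequence of $\partial\sigma_j/\partial\lambda_i>0$ on $\Gamma_k^+$ for $j\leq k$; cf.\ \cite{MR1976082}) to conclude $\lambda(\D^2\psi(x_0))\in\Gamma_k^+$. Either way $v$ is $k$-convex in the viscosity sense on $B_\rho(y)$, and the argument is complete.

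I expect the reduction step to be the only real point: one must absorb the first-order term $L(x,\D\varphi)$ into a fixed quadratic, which is what makes the a priori (super)gradient bound coming from Lipschitz regularity essential, and one must then exploit the dichotomy between $k=n$ (where conjugation by $W$ is harmless because membership in $\Gamma_n^+$ is just positive definiteness) and $k<n$ (where conjugation by a general $W$ need not preserve $\Gamma_k^+$, which is precisely why \eqref{2convexitycon1} requires $W=\mathrm{Id}$). Everything else — uniformity of $\Lambda_0$ and $C$ on relatively compact subballs, invariance of $k$-convexity and punctual second-order differentiability under translation by a smooth function, and the locality of \autoref{kconvexalexdandrofftype} — is routine bookkeeping.
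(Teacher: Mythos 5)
Your proof is correct and follows essentially the same route as the paper: add a large quadratic to $w$, use the Lipschitz bound to control $\D\varphi(x_0)$ and absorb $L(x_0,\D\varphi(x_0))$ into $C\,\mathrm{Id}$, then invoke \autoref{kconvexalexdandrofftype}, treating the conjugation by $W$ via Sylvester's law when $k=n$ and trivially when $W=\mathrm{Id}$. You spell out the supergradient estimate and the localization to compactly contained balls, which the paper leaves implicit, but the key steps and the use of the dichotomy between the two hypotheses are the same.
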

\begin{proof}
	Pick $C(L,|w|_{0,1;B_1})$ sufficiently large so that
	\begin{equation}\label{ourfunctioniskconvex}
		L(x,p) \leq 2CI \qquad \forall x \in B_1, |p|\leq |w|_{0,1;B_1}.	
	\end{equation}\par 
	Fix arbitrary $x_0 \in B_1$. Pick a paraboloid $\varphi$ such that
	\begin{align}
		\varphi(x_0)&= w(x_0), & \varphi \geq w \text{ in a neighborhood of }x_0.
	\end{align} 
	By \autoref{defnviscositysolution},  
	\begin{equation*}
		\lambda\bigl( W(x_0)\circ (\D^2 \varphi+L(\cdot,\D \varphi) )(x_0) \circ  W(x_0)\bigr) \in \Gamma^{+}_k.
	\end{equation*}
 By \eqref{conecondition} and \eqref{ourfunctioniskconvex},
	\begin{equation}\label{ourfunctioniskconvexyes}
		\lambda\bigl( W(x_0)\circ(\D^2 \varphi +2CI)(x_0) \circ W(x_0) \bigr)  \in \Gamma_k^+.
	\end{equation} 
	In particular, \eqref{ourfunctioniskconvexyes} means 
	\begin{equation}\label{2convexitypart2}
		\lambda\bigl( W(x_0)\circ(\D^2 (\varphi + C |x|^2))(x_0) \circ W(x_0)\bigr) \in\Gamma_k^+.
	\end{equation}\par 
In case when $k=n$. Since $W(x_0) \in \text{Sym}_n^+$, \eqref{2convexitypart2} implies that 
	\begin{equation}\label{2convexitypart2-1}
	\lambda\bigl(\D^2 (\varphi + C |x|^2)(x_0)\bigr) \in\Gamma_n^+.
\end{equation}
Since $x_0$ is arbitrary, \eqref{2convexitypart2-1} implies $w + C |x|^2$ is $n$-convex in the viscosity sense, hence is punctually second-order differentiable almost everywhere by \autoref{kconvexalexdandrofftype}. \par 
In case when $k > n/2$ and $W(x) = \delta_{ij}$ for all $x \in B_1$, since $x_0$ is arbitrary, \eqref{2convexitypart2} already implies that $w + C |x|^2$ is $k$-convex, hence is punctually second-order differentiable almost everywhere by \autoref{kconvexalexdandrofftype}.
\end{proof}
When neither \eqref{2convexitycon1} in \autoref{2convexity} nor \eqref{2convexitycon2} in \autoref{2convexity} is satisfied, as in the case of \autoref{corollary2} and \autoref{corollary3}, we must consider a variant of \autoref{kconvexalexdandrofftype}. \autoref{dualconenonsense2}, \autoref{dualconenonsense}, and \autoref{poincarelikeinequality} are required to prove such a variant. See \autoref{thenewsection} for more. \par 
\begin{definition}\cite{MR2360615}
Let $(\Gamma_k^+)^\ast$ be the dual cone of $\Gamma_k^+$ defined by 
\begin{equation*}
(\Gamma_k^+)^\ast = \{\lambda \in \mathbb{R}^n; \lambda \cdot \mu \geq 0 \; \forall \mu \in \Gamma_k^+\},
\end{equation*}
where 
\begin{equation*}
\lambda \cdot \mu = \sum_i \lambda_i \mu_i  \qquad  \text{ for }  \lambda = (\lambda_1, \cdots, \lambda_n),\quad  \mu= (\mu_1, \cdots, \mu_n).
\end{equation*}\par 
Let $\rho_k$ be the normalized $\sigma_k$ so that for $\lambda \in \Gamma_k^+$,
\begin{equation*}
\rho_k(\lambda) = \biggl(\sigma_k(\lambda) \binom{n}{k}^{-1} \biggr)^{1/k}.
\end{equation*} 
Define $\rho_k^\ast$ on $(\Gamma_k^+)^\ast$ by 
\begin{equation*}
	\rho_k^\ast (\lambda) = \inf \bigl\{  \frac{\lambda \cdot \mu}{n}; \text{ among all } \mu \in \Gamma_k^+ \text{ with }\rho_k(\mu) \geq 1\bigr\}.
\end{equation*}
\end{definition}

\begin{theorem}\cite{MR2133413} \label{dualconenonsense2} We have the following explicit characterization,
	\begin{align*}
		(\Gamma_2^+)^*=\{\lambda \in \overline{\Gamma}_n^+;|\lambda|^2 \leq \frac{1}{n-1}\bigl(\sum_{i=1}^n \lambda_i\bigr)^2\}.
	\end{align*}
	In particular, the following matrices:
	\begin{equation}\label{dualconematricestest}\left\{
		\begin{aligned}
			&\text{the identity matrix } I_n,\\
			&I_n - e_i \otimes e_i, \text{ and }\\
			&I_n + t [e_i \otimes e_j + e_j \otimes e_i] \text{ for } i \neq j \text{ and } 0<t<(n/(2(n-1)))^{1/2},
		\end{aligned}\right.
	\end{equation} all have eigenvalues in $(\Gamma_2^+)^*$. 
\end{theorem}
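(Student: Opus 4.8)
The plan is to compute $(\Gamma_2^+)^*$ directly from the definition, the engine being the identity $2\sigma_2(\mu)=\sigma_1(\mu)^2-\abs{\mu}^2$, which rewrites $\overline{\Gamma}_2^+=\{\mu:\sigma_1(\mu)\ge 0,\ \abs{\mu}^2\le\sigma_1(\mu)^2\}$. Since $\lambda\cdot\mu\ge 0$ is a closed condition on $\mu$, we have $(\Gamma_2^+)^*=\{\lambda:\lambda\cdot\mu\ge 0\ \text{ for all }\mu\in\overline{\Gamma}_2^+\}$, so it suffices to determine those $\lambda$. First I would record two easy reductions. Because $\Gamma_n^+\subset\Gamma_2^+$ (all entries positive forces $\sigma_1,\sigma_2>0$) and the positive orthant is self-dual, $(\Gamma_2^+)^*\subset(\Gamma_n^+)^*=\overline{\Gamma}_n^+$; and testing against $\mathbf 1=(1,\dots,1)\in\Gamma_2^+$ shows every $\lambda\in(\Gamma_2^+)^*$ has $\sigma_1(\lambda)=\lambda\cdot\mathbf 1\ge 0$.

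Next, fix $\lambda$ with $\sigma_1(\lambda)\ge 0$ and evaluate $m(\lambda):=\inf\{\lambda\cdot\mu:\mu\in\overline{\Gamma}_2^+\}$. As $\overline{\Gamma}_2^+$ is a closed cone, $m(\lambda)\in\{0,-\infty\}$, and since $\sigma_1(\mu)=0$ forces $\mu=0$ we may minimise over the slice $\Sigma:=\{\mu:\sigma_1(\mu)=1,\ \abs{\mu}^2\le 1\}$. Writing $\mu=\tfrac1n\mathbf 1+v$ with $v\perp\mathbf 1$, the bound $\abs{\mu}^2\le 1$ becomes $\abs{v}^2\le\tfrac{n-1}{n}$, so $\Sigma$ is a Euclidean ball of radius $r=\sqrt{(n-1)/n}$ inside the affine hyperplane $\{\sigma_1=1\}$, centred at $\tfrac1n\mathbf 1$. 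Minimising the linear functional $\mu\mapsto\lambda\cdot\mu$ over a ball gives $\min_{\Sigma}\lambda\cdot\mu=\tfrac{\sigma_1(\lambda)}{n}-r\,\abs{\lambda^\perp}$, where $\lambda^\perp:=\lambda-\tfrac{\sigma_1(\lambda)}{n}\mathbf 1$ satisfies $\abs{\lambda^\perp}^2=\abs{\lambda}^2-\tfrac{\sigma_1(\lambda)^2}{n}$. Hence $\lambda\in(\Gamma_2^+)^*$ if and only if $\tfrac{\sigma_1(\lambda)}{n}\ge r\,\abs{\lambda^\perp}$; squaring (both sides are $\ge 0$) and clearing denominators, this collapses to $\sigma_1(\lambda)^2\ge(n-1)\abs{\lambda}^2$, i.e. $\abs{\lambda}^2\le\tfrac1{n-1}\bigl(\sum_i\lambda_i\bigr)^2$.

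To match this with the first paragraph I would also verify the converse inclusion: $\sigma_1(\lambda)\ge 0$ together with $\abs{\lambda}^2\le\tfrac1{n-1}\sigma_1(\lambda)^2$ already forces $\lambda\in\overline{\Gamma}_n^+$. Indeed, if $\lambda_j<0$ for some $j$, put $s'=\sigma_1(\lambda)-\lambda_j$; then $0\le\sigma_1(\lambda)<s'$, and Cauchy--Schwarz on the remaining $n-1$ entries gives $\sigma_1(\lambda)^2<(s')^2\le(n-1)\sum_{i\ne j}\lambda_i^2\le(n-1)\abs{\lambda}^2$, contradicting the norm bound. Combining the two paragraphs yields $(\Gamma_2^+)^*=\{\lambda\in\overline{\Gamma}_n^+:\abs{\lambda}^2\le\tfrac1{n-1}(\sum_i\lambda_i)^2\}$.

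For the three families of matrices I would simply read off eigenvalue vectors and check the two defining inequalities. The matrix $I_n$ has eigenvalue vector $(1,\dots,1)$; $I_n-e_i\otimes e_i$ has $(1,\dots,1,0)$; and since $e_i\otimes e_j+e_j\otimes e_i$ has eigenvalues $\pm 1$ on $\mathrm{span}\{e_i,e_j\}$ and $0$ on its orthogonal complement, $I_n+t(e_i\otimes e_j+e_j\otimes e_i)$ has eigenvalue vector $(1+t,\ 1-t,\ 1,\dots,1)$. For $n\ge 2$ one has $\sqrt{n/(2(n-1))}\le 1$, so $0<t<\sqrt{n/(2(n-1))}$ guarantees $1-t>0$ and all three vectors lie in $\overline{\Gamma}_n^+$; the condition $\abs{\lambda}^2\le\tfrac1{n-1}\sigma_1(\lambda)^2$ then reads $n\le n^2/(n-1)$ for $I_n$, the equality $n-1=n-1$ for $I_n-e_i\otimes e_i$, and $n+2t^2\le n^2/(n-1)$, i.e. $t^2\le n/(2(n-1))$, for the last family --- precisely the stated range for $t$. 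The only delicate point in the argument is the middle paragraph: recognising the normalised slice $\Sigma$ as a round ball (this is exactly where $2\sigma_2=\sigma_1^2-\abs{\cdot}^2$ is used) and tracking signs when squaring $\tfrac{\sigma_1(\lambda)}{n}\ge r\abs{\lambda^\perp}$; the orthant self-duality, the Cauchy--Schwarz estimate, and the eigenvalue computations are routine.
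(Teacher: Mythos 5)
Your proof is correct, and while it lands on the same geometric picture as the paper (the round cone $\overline{\Gamma}_2^+$ about the diagonal $e=(1,\dots,1)$), the computational route is genuinely different. The paper slices by the unit cylinder about the axis, uses the identity $\langle\lambda,te\rangle=\sigma_2(\lambda)+\sigma_2(te)+\sigma_1(\lambda)\sigma_1(te)-\sigma_2(\lambda+te)$ to solve for the $t$ at which $\lambda+te$ hits $\partial\Gamma_2^+$, reads off the half-angle of the cone, and then obtains the dual cone by passing to the complementary angle. You instead use $2\sigma_2=\sigma_1^2-\abs{\cdot}^2$ to rewrite $\overline{\Gamma}_2^+$ explicitly as $\{\sigma_1\geq 0,\ \abs{\mu}^2\leq\sigma_1(\mu)^2\}$, slice by the affine hyperplane $\{\sigma_1=1\}$ to see a Euclidean ball of radius $\sqrt{(n-1)/n}$, and then just minimise a linear functional over a ball. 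This is algebraically cleaner, avoids the angle bookkeeping, and is more careful about closure issues (you explicitly pass to $\overline{\Gamma}_2^+$ and note that $\sigma_1=0$ there forces $\mu=0$, so the slice really exhausts the cone up to scaling). You also supply two things the paper elides: the converse verification that $\{\sigma_1\geq0,\ \abs{\lambda}^2\leq\tfrac1{n-1}\sigma_1^2\}\subset\overline{\Gamma}_n^+$ via Cauchy--Schwarz, and the explicit eigenvalue computations for the three matrix families, including the bound $t^2\leq n/(2(n-1))$ matching the stated range. Both approaches work; yours is the more self-contained and auditable of the two.
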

\autoref{dualconenonsense2} is stated in \cite{MR2133413} without a proof. For the reader's convenience, we have included a proof here.
\begin{proof}
	Let $e=(1,1,\cdots, 1)$.
	Let $\mathcal{C}$ be the cylinder defined by
	\begin{equation*}
		\mathcal{C}= \{x \in \mathbb{R}^n; \text{ distance of }x \text{ to } \{t e; t \in \mathbb{R}\} \text{ equals }1\}.
	\end{equation*}
	Let $\mathcal{S} = \partial \Gamma_1^+ \cap \mathcal{C}$ be a cross section of the cylinder $\mathcal{C}$.  \par 
	Fix $\lambda \in \mathcal{S}$, consider $\lambda + te$ for $t \in \mathbb{R}$. By direct calculation,
	\begin{equation}\label{dualconestructureproperty1}
		\langle \lambda, t e \rangle = \sigma_2(\lambda) + \sigma_2(te) + \sigma_1(\lambda) \sigma_1(te) - \sigma_2(te + \lambda). 
	\end{equation}
	Since $\Gamma_2^+$ lies in $\Gamma_1^+$, we can pick $t>0$, depending on $\lambda$, so that $(t  e + \lambda) \in \partial \Gamma_2^+$. Since $\partial \Gamma_1^+$ is orthogonal to $e$, and $\sigma_2(\mu)=0$ for all $\mu \in \partial\Gamma_2^+$, \eqref{dualconestructureproperty1} reduces to 
	\begin{equation}\label{dualconestructureproperty2}
		0= \sigma_2(\lambda) + t^2 \frac{n(n-1)}{2}.
	\end{equation}
	Note that $\mathcal{S}$ is also the intersection of $\partial \Gamma_1^+$ with $\partial B_1$. Thus $|\lambda|=1$, hence
	\begin{equation}\label{dualconestructureproperty3}
		\sigma_2(\lambda) = \frac{1}{2}\bigl(\sigma_1(\lambda)^2 - |\lambda|^2\bigr) = -\frac{1}{2}.
	\end{equation}
	Using \eqref{dualconestructureproperty3}, \eqref{dualconestructureproperty2} further reduces to 
	\begin{equation}
		t = \sqrt{\frac{1}{n(n-1)}},
	\end{equation} 
	which is independent of $\lambda$. \par 
	We conclude that 
	\begin{equation*}
		\partial\Gamma_2^+ \cap \mathcal{C} = \bigl\{ \lambda + \bigl(n(n-1)\bigr)^{-1/2}  e; \lambda \in \mathcal{S}\bigr\}. 
	\end{equation*}
For two vectors $a$ and $b$, we denote the angle between them by $\angle(a, b)$.	For $ \Gamma_2^+$, we have the following characterization,
	\begin{equation*}
		\begin{split}
		 \Gamma_2^+ &= \bigl\{\lambda \in \mathbb{R}^n; \angle(\lambda, e)<  \frac{\pi}{2}-\arctan( \frac{1}{\sqrt{n  (n-1)}}  |e|)\bigr\}\\
		& = \bigl\{\lambda \in \mathbb{R}^n; \cos \angle(\lambda, e)>  \frac{1}{\sqrt{n}}\bigr\}\\
		&=\bigl\{\lambda \in \mathbb{R}^n; \frac{\lambda \cdot e}{|\lambda| |e| } >  \frac{1}{\sqrt{n}}\bigr\}.
		\end{split}
	\end{equation*}
	Thus $(\Gamma_2^+)^\ast$ has the following characterization,
	\begin{equation*}
		\begin{split}
			&\bigl\{\lambda \in \mathbb{R}^n; \angle(\lambda, \mu) \leq \frac{\pi}{2}, \; \forall \mu \in \Gamma_2^+\bigr\}\\
			=& \bigl\{\lambda \in \mathbb{R}^n; \angle(\lambda, e)\leq  \arctan( \frac{1}{\sqrt{n  (n-1)}} |e|)\bigr\}\\
			=&\bigl\{\lambda \in \mathbb{R}^n; \frac{\lambda \cdot e}{|\lambda|  |e| } \geq \sqrt{1-1/n}\bigr\}\\
			=&\bigl\{\lambda \in \overline{\Gamma}_n^+; \frac{\sigma_1(\lambda)}{|\lambda|  \sqrt{n}} \geq \sqrt{\frac{n-1}{n}}\bigr\}\\
			=& \bigl\{\lambda \in \overline{\Gamma}_n^+; |\lambda|^2 \leq \frac{1}{n-1}\sigma_1(\lambda)^2\bigr\},
		\end{split}
	\end{equation*}
	completing the proof.
\end{proof}
\begin{theorem}\cite[Proposition 2.1]{MR2360615}\label{dualconenonsense}
	Let $A,B \in \text{Sym}_n$ with $	\lambda(A) \in \Gamma_k^+$ and $\lambda(B) \in (\Gamma_k^+)^\ast$, then
	\begin{equation}
		\rho_k(A)  \rho_k^\ast(B) \leq \frac{1}{n}  \textrm{Trace}(AB).
	\end{equation} 
\end{theorem}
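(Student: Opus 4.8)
The plan is to reduce this matrix inequality to a one-dimensional (vector) inequality by means of a trace inequality, after which the bound follows essentially from the \emph{definition} of the dual gauge $\rho_k^\ast$. Since $\lambda(A)\in\Gamma_k^+$ and $\Gamma_k^+=\{\sigma_1>0,\dots,\sigma_k>0\}$ is open with $\sigma_k>0$ on it, we have $\rho_k(A):=\rho_k(\lambda(A))>0$; set $\hat a:=\lambda(A)/\rho_k(A)$, so that $\hat a\in\Gamma_k^+$ and $\rho_k(\hat a)=1$. This normalized vector is the natural competitor to insert into the infimum defining $\rho_k^\ast$.

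The single genuine input is the von Neumann (Ruhe) trace inequality: for $A,B\in\text{Sym}_n$,
\[
\textrm{Trace}(AB)\;\ge\;\sum_{i=1}^n \lambda_i^{\downarrow}(A)\,\lambda_i^{\uparrow}(B).
\]
A short self-contained argument: writing $A=O_AD_AO_A^{T}$ and $B=O_BD_BO_B^{T}$ with $D_A,D_B$ diagonal, one has $\textrm{Trace}(AB)=\sum_{i,j}(D_A)_{ii}\,(D_B)_{jj}\,s_{ij}$ where $s_{ij}=\bigl((O_A^{T}O_B)_{ij}\bigr)^2$ are the entries of a doubly stochastic matrix; by the Birkhoff--von Neumann theorem $(s_{ij})$ is a convex combination of permutation matrices, and since the expression is linear in $(s_{ij})$ it is bounded below by $\min_{\pi\in S_n}\sum_i (D_A)_{ii}\,(D_B)_{\pi(i)\pi(i)}$, i.e.\ by some pairing of the eigenvalues of $A$ with a rearrangement of those of $B$. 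In particular there is a rearrangement $\tilde b$ of the eigenvalue vector $\lambda(B)$ such that $\textrm{Trace}(AB)\ge \lambda(A)\cdot\tilde b$.

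It remains to assemble the pieces. Because $(\Gamma_k^+)^\ast$ is a symmetric cone, $\tilde b\in(\Gamma_k^+)^\ast$; and because $\rho_k^\ast$ is permutation-invariant (reindex the competitor $\mu$ in its defining infimum, using symmetry of $\Gamma_k^+$ and $\rho_k$), $\rho_k^\ast(\tilde b)=\rho_k^\ast(\lambda(B))=\rho_k^\ast(B)$. Since $\hat a\in\Gamma_k^+$ satisfies $\rho_k(\hat a)=1\ge 1$, it is admissible in the infimum defining $\rho_k^\ast(\tilde b)$, whence
\[
\rho_k^\ast(B)=\rho_k^\ast(\tilde b)\le\frac{\tilde b\cdot\hat a}{n}=\frac{\tilde b\cdot\lambda(A)}{n\,\rho_k(A)}\le\frac{\textrm{Trace}(AB)}{n\,\rho_k(A)},
\]
and multiplying by $\rho_k(A)$ yields $\rho_k(A)\,\rho_k^\ast(B)\le\frac1n\textrm{Trace}(AB)$. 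I expect the only real obstacle to be the trace-inequality step — specifically, how much of the von Neumann/Birkhoff argument to spell out versus cite — since everything else is bookkeeping about symmetry of $\Gamma_k^+$, $(\Gamma_k^+)^\ast$, $\rho_k$, $\rho_k^\ast$ and the defining infimum of $\rho_k^\ast$; the only potential degeneracy, $\rho_k(A)=0$, does not occur precisely because $\Gamma_k^+$ is the \emph{open} cone.
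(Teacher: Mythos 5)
Your proof is correct and self-contained. Note that the paper does not give its own proof of this theorem --- it is cited from the references (with a further pointer to a lemma of Caffarelli--Nirenberg--Spruck) --- so there is no in-paper argument to compare against. Your chain is sound: $\rho_k(A)>0$ since $\Gamma_k^+$ is open, so $\hat a=\lambda(A)/\rho_k(A)$ is well-defined and admissible in the infimum defining $\rho_k^\ast$; the Birkhoff--von Neumann step correctly produces a permutation $\tilde b$ of $\lambda(B)$ with $\lambda(A)\cdot\tilde b\le\textrm{Trace}(AB)$; and the permutation-invariance of $(\Gamma_k^+)^\ast$ and of $\rho_k^\ast$ (inherited from the symmetry of $\Gamma_k^+$ and $\rho_k$) closes the argument. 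For comparison, the classical Caffarelli--Nirenberg--Spruck proof takes a closely related but slightly different route: diagonalize $A$, observe via Schur--Horn that the diagonal of $B$ in that basis is a doubly stochastic image of $\lambda(B)$, hence lies in the convex symmetric cone $(\Gamma_k^+)^\ast$ with $\rho_k^\ast$ no smaller than $\rho_k^\ast(\lambda(B))$ (using that $\rho_k^\ast$, being an infimum of linear functions, is concave and symmetric, hence Schur-concave), and then apply the vector inequality. Both arguments rest on the same underlying fact --- the matrix $\bigl((O_A^{T}O_B)_{ij}^2\bigr)$ is doubly stochastic --- so they are essentially cousins; your rearrangement version sidesteps the Schur-concavity bookkeeping and is arguably a bit more economical.
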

For \autoref{dualconenonsense}, also see \cite[Lemma 6.2]{MR0806416} for a proof.
The weighted interior norm and semi-norms on $C^0(\Omega)$ are defined as follows 
\begin{align*}
	|u|_{0;\Omega}^{(\sigma)} &= \sup_{x \in \Omega} d_x^\sigma |u(x)|,\\
	[u]_{0,\alpha;\Omega}^{(\sigma)} &= \sup_{x,y \in \Omega, x \neq y} d_{x,y}^{\sigma+\alpha} \frac{|u(x)-u(y)|}{|x-y|^\alpha},\\
	|u|_{0,\alpha; \Omega}^{(\sigma)} &= [u]_{0,\alpha; \Omega}^{(\sigma)}+|u|_{0;\Omega}^{(\sigma)},
\end{align*}
where $d_x=d(x,\partial \Omega)$ and $d_{x,y}=\min(d_x,d_y)$.
\begin{theorem}\cite[Lemma 2.6]{MR1726702}\label{poincarelikeinequality}
	For any $\epsilon > 0$, for any $u \in C^0(\Omega) \cap L^1(\Omega)$,
	\begin{equation}
		|u|_{0;\Omega}^{(n)} \leq \epsilon^\alpha [u]_{0,\alpha; \Omega}^{(n)} + C\epsilon^{-n} \int_{\Omega} |u|,
	\end{equation}
	for some constant $C$ depending on $n$.
\end{theorem}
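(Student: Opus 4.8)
The plan is to reduce the inequality to a pointwise estimate and take a supremum at the end. Fix an arbitrary $x_0 \in \Omega$; it suffices to bound $d_{x_0}^n\abs{u(x_0)}$, and we may assume $[u]_{0,\alpha;\Omega}^{(n)} < \infty$ since otherwise there is nothing to prove. The mechanism is the classical one behind Gilbarg--Trudinger-type interpolation inequalities: compare $u(x_0)$ with its mean value over a ball $B_r(x_0)$ whose radius $r$ is a fixed fraction of the boundary distance $d_{x_0}$, and control the oscillation of $u$ over that ball by the weighted Hölder seminorm. Shrinking $r$ relative to $d_{x_0}$ makes the oscillation contribution small at the cost of inflating the averaging term, and balancing the two is exactly what produces the factors $\epsilon^{\alpha}$ and $\epsilon^{-n}$.

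Concretely, I would fix $\theta \in (0,1)$ (to be chosen proportional to $\epsilon$) and set $r = \theta d_{x_0}$, so that $B_r(x_0) \Subset \Omega$. For $y \in B_r(x_0)$ one has $d_y \ge d_{x_0} - r = (1-\theta)d_{x_0}$, hence $d_{x_0,y} = \min(d_{x_0},d_y) \ge (1-\theta)d_{x_0}$, and so from the definition of the seminorm
\begin{equation*}
\abs{u(y) - u(x_0)} \le d_{x_0,y}^{-(n+\alpha)}\abs{y-x_0}^{\alpha}\,[u]_{0,\alpha;\Omega}^{(n)} \le (1-\theta)^{-(n+\alpha)}\theta^{\alpha}\,d_{x_0}^{-n}\,[u]_{0,\alpha;\Omega}^{(n)}.
\end{equation*}
Averaging the triangle inequality $\abs{u(x_0)} \le \abs{u(y)} + \abs{u(y)-u(x_0)}$ over $y \in B_r(x_0)$, using $\abs{B_r(x_0)} = \omega_n\theta^n d_{x_0}^n$ (with $\omega_n$ the volume of the unit ball), the bound above for the second term, and $\int_{B_r(x_0)}\abs{u} \le \int_\Omega\abs{u}$ for the first, and then multiplying by $d_{x_0}^n$ --- at which stage the $d_{x_0}$-powers cancel in the oscillation term because $n-(n+\alpha)+\alpha = 0$ --- one arrives at
\begin{equation*}
d_{x_0}^n\abs{u(x_0)} \le \frac{1}{\omega_n\theta^n}\int_\Omega\abs{u} + \frac{\theta^{\alpha}}{(1-\theta)^{n+\alpha}}\,[u]_{0,\alpha;\Omega}^{(n)}.
\end{equation*}

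To conclude, I would choose $\theta = c(n,\alpha)\,\epsilon$ with $c$ small enough that $2^{n+\alpha}c^{\alpha} \le 1$ (and $\theta \le 1/2$, which covers the range of $\epsilon$ in which the estimate is applied): then $\theta^{\alpha}(1-\theta)^{-(n+\alpha)} \le 2^{n+\alpha}c^{\alpha}\epsilon^{\alpha} \le \epsilon^{\alpha}$ and $(\omega_n\theta^n)^{-1} = C(n)\epsilon^{-n}$, so taking the supremum over $x_0 \in \Omega$ gives the claimed inequality. I do not expect a genuine obstacle: the only points requiring care are that $B_r(x_0)$ stays compactly inside $\Omega$ (automatic, since $r < d_{x_0}$) and the bookkeeping of the weight exponents so that the $d_{x_0}$-powers drop out of the Hölder term --- it is precisely this cancellation that makes the coefficient of $[u]_{0,\alpha;\Omega}^{(n)}$ a clean $\epsilon^{\alpha}$ with no dependence on the geometry of $\Omega$, which enters only through the benign term $\int_\Omega\abs{u}$.
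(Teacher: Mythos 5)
The paper does not prove this lemma; it cites it (\cite{MR1726702}*{Lemma 2.6}) and moves on, so there is no in-paper proof to compare against. Your argument is the standard one for weighted interpolation inequalities of this type, and it is correct: fix $x_0$, average the triangle inequality over the ball $B_{\theta d_{x_0}}(x_0)$ (which lies in $\Omega$ since $\theta<1$), bound the oscillation term using the weighted H\"older seminorm (the exponent bookkeeping $n-(n+\alpha)+\alpha=0$ is exactly right), bound the averaging term by $\int_\Omega |u|$, and then choose $\theta$ proportional to $\epsilon$.

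Two small remarks, neither of which is a gap. First, with your choice $\theta = c(n,\alpha)\epsilon$ the constant $C = (\omega_n c^n)^{-1}$ depends on $\alpha$ as well as $n$; the theorem statement says ``depending on $n$,'' but this is harmless here because in the paper's only use of the lemma (in the proof of \autoref{generalizedestimate}) the exponent $\alpha$ is itself a function of $n$ and $k$, so the dependence gets absorbed. Second, as you note, the requirement $\theta \le 1/2$ caps the admissible range of $\epsilon$; indeed the inequality as literally stated cannot hold for all $\epsilon>0$ (test $u\equiv 1$ on a bounded domain, where the seminorm vanishes but the left side does not), but only small $\epsilon$ is used downstream, and your proof delivers the estimate on exactly that range.
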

\section{$k$-Convex Functions on Riemannian Manifolds}\label{thenewsection}
In this section, we follow the idea in \cite{MR2133413}, which studies $k$-convex functions on Euclidean spaces. By slightly modifying the proof in \cite{MR2133413}, we extend the result to general manifolds assuming some extra conditions. First, we prove a comparison principle. 
\begin{lemma}\label{compatisiontheoryformoregeneralkconvex}
Notations as above. Let $w$ and $v$ be, respectively, a viscosity subsolution and a viscosity supersolution of 
\begin{equation}\left\{\label{generalizedkconvexity}
	\begin{aligned}
		\sigma_k^{1/k}\bigl[\lambda\bigl(W \circ \D^2 u \circ W\bigr)\bigr] &= 0,\\
		\lambda\bigl(W \circ \D^2 u \circ W\bigr) &\in \overline{\Gamma}_k^+ ,
	\end{aligned}\right.
\end{equation}
on open bounded $\Omega \subset \mathbb{R}^n$, with $v \in C^2(\Omega)$. Then $w \leq v$ on $\partial \Omega$ implies that $w \leq v$ in $\Omega$.
\end{lemma}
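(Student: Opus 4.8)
The plan is to argue by contradiction using the maximum of $w-v$, exploiting that $v$ is a genuine $C^2$ function so that $v$ plus a constant is an admissible competitor touching $w$ from above. Working on $\overline{\Omega}$, which we take to be a compact subset of $B_1$ so that $w-v\in C(\overline{\Omega})$, suppose $\max_{\overline{\Omega}}(w-v)>0$. Since $w\le v$ on $\partial\Omega$, the number $m:=\max_{\overline{\Omega}}(w-v)$ is then positive and attained at some interior point $x_0\in\Omega$.

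The first step is to record what the supersolution property forces on the $C^2$ function $v$. Using $\varphi=v$ as a test function (it touches $v$ from below, with equality, at every point), \autoref{defnviscositysolution} applied to \eqref{generalizedkconvexity} gives, at every $x\in\Omega$, that either $\lambda\bigl(W(x)\circ\D^2 v(x)\circ W(x)\bigr)\in\Gamma_k^+$ and $\sigma_k^{1/k}\bigl[\lambda\bigl(W(x)\circ\D^2 v(x)\circ W(x)\bigr)\bigr]\le 0$, or $\lambda\bigl(W(x)\circ\D^2 v(x)\circ W(x)\bigr)\in\mathbb{R}^n\setminus\overline{\Gamma}_k^+$. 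The first alternative is impossible because $\sigma_k>0$ strictly on the open cone $\Gamma_k^+$ (part of \eqref{conecondition}). Hence
\begin{equation*}
\lambda\bigl(W(x)\circ\D^2 v(x)\circ W(x)\bigr)\notin\Gamma_k^+\qquad\text{for every }x\in\Omega.
\end{equation*}

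The second step tests the subsolution $w$ from above at $x_0$. The function $\varphi:=v+m$ is $C^2$, satisfies $\varphi(x_0)=v(x_0)+m=w(x_0)$, and $\varphi\ge w$ throughout $\Omega$ by the definition of $m$; in particular it touches $w$ from above at $x_0$ on any ball contained in $\Omega$. Applying the subsolution part of \autoref{defnviscositysolution} to \eqref{generalizedkconvexity}, and noting $\D^2\varphi(x_0)=\D^2 v(x_0)$, we obtain $\lambda\bigl(W(x_0)\circ\D^2 v(x_0)\circ W(x_0)\bigr)\in\Gamma_k^+$, which contradicts the display above at $x=x_0$. Thus $\max_{\overline{\Omega}}(w-v)\le 0$, i.e.\ $w\le v$ in $\Omega$.

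This argument is short precisely because $v\in C^2$: that hypothesis lets us use $v+m$ directly as a test function for $w$, so no doubling of variables or sup/inf-convolution is needed, and there is no genuine obstacle. The only points requiring mild care are the standing conventions on $\Omega$ (so that $\max_{\overline{\Omega}}(w-v)$ is attained at an interior point) and the elementary fact that $\sigma_k^{1/k}$ vanishes only on $\partial\Gamma_k^+$ and is strictly positive in the interior, which is exactly what kills the first alternative of the supersolution condition.
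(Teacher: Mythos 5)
Your proposal is correct and is essentially the same argument as the paper's: translate $v$ up to touch $w$ from above at the maximum of $w-v$, apply the supersolution property to $v$ (noting the first alternative is vacuous since $\sigma_k>0$ on the open cone $\Gamma_k^+$), and apply the subsolution property to $w$ via the test function $v+m$, deriving a contradiction. You simply make the two applications of \autoref{defnviscositysolution} and the elimination of the first supersolution alternative more explicit than the paper does, but there is no substantive difference.
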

\begin{proof}
Suppose that $w-v$ has a positive local maximum at $x_0$. Fix $\delta >0$ so that $B_\delta(x_0) \supset \Omega$. For small $\epsilon >0$, $w(x)-v(x)-\epsilon(\delta^2-|x-x_0|^2)$ is $<0$ on $\partial\Omega$, positive at $x_0$, hence has a positive local maximum in $\Omega$. By translating $v+\epsilon(\delta^2-|x-x_0|^2)$ up vertically, we may assume that $v+\epsilon(\delta^2-|x-x_0|^2)$ touches $w$ from above at $x_1 \in \Omega$. By definition of viscosity subsolution, at $x_1$, 
\begin{equation}\label{comparision2}
\lambda\bigl( W \circ \D^2 \bigl(v+\epsilon(\delta^2-|x-x_0|^2)\bigr) \circ W\bigr) \in\overline{\Gamma}_k^+.
\end{equation}
However, \eqref{comparision2} contradicts $v$ being a viscosity supersolution of \eqref{generalizedkconvexity}.
\end{proof}
Next, we show that when $W$ is close enough to $\delta_{ij}$, an estimate similar to \cite[Theorem 2.1]{MR2133413} holds. 
\begin{proposition}\label{generalizedestimate}
Notations as above. Let $w$ be a viscosity subsolution to \eqref{generalizedkconvexity} on open bounded $\Omega \subset \mathbb{R}^n$. For any $\alpha \in \mathbb{R}$ with $2-n/k> \alpha >0$, there exists $\delta$ depending on $\alpha$, $n$, and $k$, such that if $|W-\delta_{ij}|_{0;B_1} \leq \delta$, then for any $\Omega^\prime\Subset \Omega$,
\begin{equation}
	[w]_{0,\alpha;\Omega^\prime}^{(n)} \leq C(n,k,\alpha) \int_{\Omega^\prime} |w|,
\end{equation}
for some constant $C$ depending on $n$, $k$ and $\alpha$.
\end{proposition}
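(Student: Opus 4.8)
The plan is to carry out the proof of the estimate \cite{MR2133413}*{Theorem~2.1} in the present weighted setting, checking that the two places where $W$ intervenes survive a small perturbation of $W$. Fix an exponent $q$ with $\alpha<q<2-n/k$; since $k\le n$ this forces $q\in(0,1)$, and the size of the gap $2-n/k-q$ will fix $\delta=\delta(\alpha,n,k)$. Everything rests on the following \emph{increment estimate}: for every $y\in B_1$, every ball $B_R(y)\subset B_1$ and every $z\in B_R(y)$,
\[
  w(z)-w(y)\ \le\ 2\,R^{-q}\,\norm{w}_{L^\infty(B_R(y))}\,\abs{z-y}^{q}.
\]

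To prove it, compare $w$ on $B_R(y)$ with the radial barrier $v(x)=w(y)+b\,\abs{x-y}^{q}$, where $b:=\max\!\bigl(R^{-q}\sup_{\partial B_R(y)}(w-w(y))^{+},\,\eta\bigr)$ with $\eta>0$ to be sent to $0$, so that $v\ge w$ on $\partial B_R(y)$ and $\D^2 v\neq 0$ off $y$. A direct computation gives that $\D^2\abs{x-y}^{q}$ has eigenvalues $q\abs{x-y}^{q-2}(q-1,1,\dots,1)$, so $\sigma_k$ of $\lambda(\D^2 v)$ equals the sign-definite quantity $\bigl(b\,q\abs{x-y}^{q-2}\bigr)^{k}\bigl(\binom{n-1}{k}+(q-1)\binom{n-1}{k-1}\bigr)$, and $\binom{n-1}{k}+(q-1)\binom{n-1}{k-1}<0$ is precisely the condition $q<2-n/k$. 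Hence $\lambda(\D^2 v)(x)\notin\overline{\Gamma_k^+}$ for $x\neq y$; as $\sigma_k\circ\lambda$ is $k$-homogeneous, its sign at $\D^2 v(x)$ is that at the rotation- and scaling-normalized matrix $\operatorname{diag}(q-1,1,\dots,1)$ — a fixed negative number depending only on $n,k,q$ — so by continuity of eigenvalues there is $\delta=\delta(q,n,k)=\delta(\alpha,n,k)>0$ such that $|W-\delta_{ij}|_{0;B_1}\le\delta$ still gives $\lambda\bigl(W\circ\D^2 v\circ W\bigr)(x)\notin\overline{\Gamma_k^+}$ for $x\neq y$. Thus $v$ is a classical, hence viscosity, supersolution of \eqref{generalizedkconvexity} on $B_R(y)\setminus\{y\}$, and \autoref{compatisiontheoryformoregeneralkconvex} applied on the annuli $B_R(y)\setminus\overline{B_\epsilon(y)}$ — where $\sup_{\partial B_\epsilon(y)}(w-v)^{+}\to 0$ as $\epsilon\downarrow 0$ by continuity of $w$, and may be absorbed into an additive constant that does not change $\D^2 v$ — yields $w\le v$ on $B_R(y)$; letting $\eta\downarrow 0$ gives the increment estimate.

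From here the argument is bookkeeping. Exchanging $y$ and $z$ yields the matching lower bound for $w(z)-w(y)$; passing to the weighted seminorm — splitting the supremum into the range $\abs{x-y}\le d_{x,y}$, where one applies the increment estimate with $R\simeq d_{x,y}$ and uses $q>\alpha$ to make the scale factor $(\abs{x-y}/d_{x,y})^{q-\alpha}\le 1$, and the range $\abs{x-y}>d_{x,y}$, which is immediate — gives $[w]_{0,\alpha;\Omega}^{(n)}\le C(n)\,|w|_{0;\Omega}^{(n)}$. Since $w\in C^{0}(\Omega)\cap L^{1}(\Omega)$, combining this with \autoref{poincarelikeinequality} and absorbing the seminorm produces $|w|_{0;\Omega}^{(n)}\le C\int_{\Omega}\abs{w}$, hence $[w]_{0,\alpha;\Omega}^{(n)}\le C\int_{\Omega}\abs{w}$; the dependence of $C$ on $n,k$ only (as stated) is obtained by instead bounding $w^{+}$ through the interior maximum principle for the uniformly elliptic operator $\operatorname{tr}\bigl(W^{2}\D^2 u\bigr)$ — note $\Gamma_k^+\subset\Gamma_1^+$ makes $w$ a subsolution of it, with ellipticity ratio near $1$ for $\delta\le\tfrac12$ — and $w^{-}$ by feeding a near-maximum of $-w$ at a point $x_*$ into the increment estimate, which forces $w\le\tfrac12 w(x_*)$ on a ball about $x_*$ of radius $\ge c(n,k)\,d_{x_*}$ (here $q\ge(2-n/k)/2$ keeps the radius comparable).

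The one substantive point is the barrier step: keeping the explicit radial barrier a supersolution of the $W$-\emph{weighted} equation. Since $\lambda\bigl(\D^2\abs{x-y}^{\,2-n/k}\bigr)$ lies exactly on $\partial\Gamma_k^+$, the sharp exponent cannot be used; taking $q$ strictly below $2-n/k$ pushes $\lambda(\D^2 v)$ a definite distance outside $\overline{\Gamma_k^+}$, which is what makes the perturbation $W\approx\delta_{ij}$ harmless, and this is exactly why one must require $\alpha<2-n/k$ and why $\delta$ depends on $\alpha$. Everything else transcribes from \cite{MR2133413} unchanged: the subharmonicity used is stable because $\Gamma_k^+\subset\Gamma_1^+$ regardless of the (mild) weight, and the comparison principle \autoref{compatisiontheoryformoregeneralkconvex} already carries $W$.
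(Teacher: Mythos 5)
Your proposal is correct and follows essentially the same route as the paper: both compare $w$ against the radial power barrier $b\,|x-y|^{q}$ on balls, note that $\lambda(\D^2|x-y|^{q})$ is a scalar multiple of $\operatorname{diag}(q-1,1,\dots,1)$, compute that $\sigma_k$ of this matrix is a negative number proportional to $n/k-2+q$ when $q<2-n/k$, use continuity of eigenvalues to get a $\delta$ so that conjugation by any $W$ with $|W-\delta_{ij}|\le\delta$ keeps those eigenvalues off $\overline{\Gamma_k^+}$, invoke the comparison principle \autoref{compatisiontheoryformoregeneralkconvex}, and finish with \autoref{poincarelikeinequality} plus absorption. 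The two cosmetic deviations — introducing a strictly intermediate exponent $q\in(\alpha,\,2-n/k)$ rather than using $\alpha$ itself (the paper takes $q=\alpha$, which also works and eliminates the extra bookkeeping step), and excising the singularity of the barrier at $y$ by working on annuli $B_R(y)\setminus\overline{B_\epsilon(y)}$ and letting $\epsilon\downarrow 0$ — do not change the substance; the latter is a slightly more explicit treatment of a point the paper passes over tersely (the barrier is not $C^2$ at $y$, so the comparison lemma as stated has to be applied away from the vertex, which your annular argument makes precise). Your closing remark about removing the residual dependence of $C$ on $\alpha$ is a reasonable refinement; the paper's own final constant in fact depends on the absorption parameter (hence on $\alpha$) even though the statement says only $n,k$, so this is a minor inconsistency in the source, not in your argument.
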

\begin{proof}
Let $2-n/k>\alpha>0$. For $B_R(y) \subset \Omega^\prime$, consider 
\begin{equation*}
	v(x)=\text{osc}_{B_R(y)}( w) \frac{|x-y|^{\alpha}}{R^{\alpha}}.
\end{equation*}
The eigenvalues of the Hessian of $|x-y|^\alpha$ (away from $x=y$) are 
\begin{equation*}
\alpha |x-y|^{\alpha-2}(\alpha-1,1,\cdots, 1).
\end{equation*}
Let 
\begin{equation*}
M_\alpha= \begin{pmatrix}
	\alpha-1 & & &\\
	& 1 & & \\
	& & \ddots & \\
	& & & 1 
\end{pmatrix}.
\end{equation*} 
Since
\begin{equation*}
	\sigma_k\bigl[\lambda\bigl(M_\alpha\bigr)\bigr]= \binom{n-1}{k-1} [n/k-2+\alpha]<0,
\end{equation*} we have $ \lambda(M_\alpha) \in \mathbb{R}^n-\overline{\Gamma}_k^+$. Thus, there exists $\delta$ depending on $\alpha$, $k$, and $n$ such that  
\begin{equation*}
\lambda\bigl(B \circ M_\alpha \circ B \bigr) \in \mathbb{R}^n-\overline{\Gamma}_k^+ \text{ for all }B \in \text{Sym}_n \text{ with }|B-\delta_{ij}| < \delta.
\end{equation*} Therefore, for $|W-\delta_{ij}| < \delta(\alpha,k,n)$,  
\begin{equation*}
\sigma_k\bigl[\lambda \bigl(W \circ \D^2 v\circ W \bigr) \bigr] <0.
\end{equation*}
In other words, $v$ is a viscosity supersolution to \eqref{generalizedkconvexity} in $B_R(y)-\{y\}$. \par 
Since $w(x)-w(y) \leq v(x)$ for $x \in \partial B_R(y)\cup\{y\}$ and $w$ is a viscosity subsolution to \eqref{generalizedkconvexity} by assumption, we have by \autoref{compatisiontheoryformoregeneralkconvex},
\begin{equation}\label{calphaestimaterandomchar}
w(x) - w(y) \leq \text{osc}_{B_R(y)} w  \frac{|x-y|^{\alpha}}{R^{\alpha}},
\end{equation}
for $x \in B_R(y)$. Since $x,y$ are arbitrary, for any $\Omega^\prime \Subset \Omega$, by \eqref{calphaestimaterandomchar},
\begin{equation}
	[w]_{0,\alpha ;\Omega^\prime}^{(n)}\coloneqq \sup_{x,y \in \Omega^\prime} d_{x,y}^{n + \alpha} \frac{|w(x)-w(y)|}{|x-y|^{\alpha}}
	\leq C\sup_{x \in \Omega^\prime} d_x^n |w(x)| = C|w|_{0;\Omega^\prime}^{(n)},
\end{equation}
for $d_x=d(x,\partial \Omega^\prime)$ and $d_{x,y}= \min(d_x,d_y)$.
By \autoref{poincarelikeinequality}, 
\begin{equation}
[w]_{0,\alpha;\Omega^\prime}^{(n)} \leq C(n,k,\alpha)\int_{\Omega^\prime} |w|,
\end{equation}
for some constant $C$ depending on $n$, $k$, and $\alpha$. 
\end{proof}
Since the viscosity subsolution $w$ that we are considering is Lipschitz, if the conditions in \autoref{generalizedestimate} are satisfied and the second-order derivatives of $w$ exist in the distribution sense, then the proof of \cite[Theorem 2.5, Theorem 1.1]{MR2133413} can be applied almost verbatim. Here we include the proof of \autoref{punctuallysecondorderdifferentiablemaing} for completeness. 
\begin{theorem}\cite{MR2133413}\label{punctuallysecondorderdifferentiablemaing}
Notations as above. Let $w$ be a Lipschitz subsolution to \eqref{generalizedkconvexity} and assume there exist signed Borel measures $\mu^{ij}=\mu^{ji}$ such that 
\begin{equation}\label{punctuallyc2conditionsmdistribution}
	\int_{\Omega} w \partial_{ij} \phi(x) \dd{x}= \int_{\Omega} \phi(x) \dd{\mu^{ij}(x)}, \quad \text{ for } i,j = 1,2, \cdots, n,
\end{equation}
for all $\phi \in C_c^\infty (\Omega)$. \par 
If $k > n/2$, then there exists $\delta$ depending on $n$ and $k$, such that $|W-\delta_{ij}|_{0;\Omega} \leq \delta$ implies that $w$ is punctually second-order differentiable almost everywhere in $\Omega$.
\end{theorem}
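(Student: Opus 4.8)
The plan is to adapt the argument of Chaudhuri and Trudinger \cite{MR2133413} to the presence of the conjugating matrix $W$; the hypothesis that $W$ is close to $\delta_{ij}$ is exactly what is needed to invoke \autoref{generalizedestimate} in the final step. The first step is to transfer the cone structure of \eqref{generalizedkconvexity} to the matrix of measures $\mu=(\mu^{ij})$, $\mu^{ij}=\partial_{ij}w$. Since $n\ge 3$ and $k>n/2$ we have $k\ge 2$, hence $\Gamma_k^+\subseteq\Gamma_2^+$ and $(\Gamma_2^+)^\ast\subseteq(\Gamma_k^+)^\ast$. For any $B\in\text{Sym}_n$ with $\lambda(B)\in(\Gamma_k^+)^\ast$ and $B>0$: if a paraboloid $\varphi$ touches $w$ from above at $x_0$, then $\lambda(W(x_0)\circ\D^2\varphi(x_0)\circ W(x_0))\in\Gamma_k^+$ by \autoref{defnviscositysolution}, so \autoref{dualconenonsense} forces $\operatorname{Trace}(\D^2\varphi(x_0)\,W(x_0)BW(x_0))\ge 0$. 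Thus $w$ is a viscosity subsolution of the linear uniformly elliptic equation $\operatorname{Trace}(W(x)BW(x)\,\D^2 u)=0$, whose coefficients are $C^2$; since $\D^2 w$ is a matrix of Radon measures this is equivalent to $\operatorname{Trace}(B\cdot W\mu W)\ge 0$ in the sense of measures. Letting $B$ run over positive-definite perturbations (and limits) of the matrices in \eqref{dualconematricestest} of \autoref{dualconenonsense2}, I obtain $\mu_0:=\operatorname{Trace}(W\mu W)\ge 0$ and $|(W\mu W)^{ij}|\le C(n)\,\mu_0$ for all $i,j$ as measures; conjugating by the bounded matrix $W^{-1}$ gives $|\mu^{ij}|\le C\,\mu_0$. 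Passing to Radon--Nikodym densities and using $((\Gamma_k^+)^\ast)^\ast=\overline{\Gamma_k^+}$ (the bidual of the convex cone $\Gamma_k^+$), the absolutely continuous density of $W\mu W$ lies in $\overline{\Gamma_k^+}$ almost everywhere.

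Next I would decompose $\mu_0=f_0\,dx+\nu_0$ into its absolutely continuous and singular parts ($f_0\ge 0$, $\nu_0\ge 0$ singular with respect to $\mathcal L^n$), so that $\mu^{ij}=f^{ij}\,dx+\nu^{ij}$ with $|f^{ij}|\le Cf_0$ a.e.\ and $|\nu^{ij}|\le C\nu_0$. By the Besicovitch differentiation theorem, Rademacher's theorem and the Lebesgue density theorem, almost every $x_0\in B_1$ is simultaneously a point with $\nu_0(B_r(x_0))=o(r^n)$ as $r\to 0$ (hence also $|\nu^{ij}|(B_r(x_0))=o(r^n)$), a Lebesgue point of every $f^{ij}$ and of every $\partial_\ell w$, a point where $w$ is classically differentiable, and a point where $\lambda(W(x_0)AW(x_0))\in\overline{\Gamma_k^+}$ with $A:=(f^{ij}(x_0))$. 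I then fix such an $x_0$ and set $P(x)=w(x_0)+\nabla w(x_0)\cdot(x-x_0)+\tfrac12(x-x_0)^\top A(x-x_0)$.

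The main step is to show that $v:=w-P$ satisfies $v(x)=o(|x-x_0|^2)$, which says $w$ is punctually second order differentiable at $x_0$ with $\D^2 w(x_0)=A$; letting $x_0$ range over the full-measure set above then proves the theorem. This follows the proof of \cite{MR2133413}*{Theorem 2.5} (and then Theorem 1.1), with \autoref{generalizedestimate} playing the role of \cite{MR2133413}*{Theorem 2.1}. The relevant facts are: $v(x_0)=0$, $\nabla v(x_0)=0$, so $v(x)=o(|x-x_0|)$, and the Hessian measure $\mu-A\,dx$ of $v$ has total variation $o(r^n)$ on $B_r(x_0)$ by the previous two steps. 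For the upper bound, for each $\epsilon>0$ the matrix $W(x_0)AW(x_0)+\epsilon I=W(x_0)(A+\epsilon W(x_0)^{-2})W(x_0)$ lies in the open cone $\Gamma_k^+$, so $P+\tfrac{\epsilon}{2}(x-x_0)^\top W(x_0)^{-2}(x-x_0)$ is a smooth supersolution of the equation with $W$ frozen at $x_0$; comparing $w$ with it on small balls $B_\rho(x_0)$, and using \autoref{generalizedestimate} to absorb the freezing error (of size $O(\rho)$ since $W\in C^2$) and the $o(\rho^n)$ Hessian mass of $w-P$, yields $\sup_{B_\rho(x_0)}(w-P)\le o(\rho^2)+C\epsilon\rho^2$; letting $\epsilon\to 0$ gives $\limsup_{x\to x_0}v(x)/|x-x_0|^2\le 0$. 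The matching lower bound is obtained, as in \cite{MR2133413}, from the potential-theoretic estimate applied to the measure $\mu-A\,dx$ of total variation $o(r^n)$ on $B_r(x_0)$, the bound $|\mu^{ij}|\le C\mu_0$ from the first step ruling out concentration of this small mass near $x_0$; the two estimates combine to give $v(x)=o(|x-x_0|^2)$.

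I expect the one genuinely new difficulty, relative to \cite{MR2133413}, to lie in this last step: keeping the comparison quantitative once $W$ is variable. One must verify that replacing $W(x)$ by $W(x_0)$ on $B_\rho(x_0)$ costs only $o(1)$ on the scale at which \autoref{generalizedestimate} is applied --- which is where both $W\in C^2$ and the smallness $\|W-\delta_{ij}\|_{0;B_1}\le\delta$ are used --- and that the singular part $\nu_0$, although of total mass only $o(r^n)$ on $B_r(x_0)$, does not obstruct the second-order expansion; controlling the latter is precisely what the cone structure of the first step provides, since it prevents the concentration phenomena that a function with a merely measure-valued Hessian would otherwise permit.
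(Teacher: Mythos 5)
There is a genuine gap in the final step of your argument, and it would be good to flag two subsidiary inefficiencies as well.

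The most serious issue is the sign in your comparison step. You set $Q(x) = P(x) + \tfrac{\epsilon}{2}(x-x_0)^{\top}W(x_0)^{-2}(x-x_0)$, so that $W(x_0)\,\D^2 Q\,W(x_0) = W(x_0)AW(x_0) + \epsilon I$ lies in the open cone $\Gamma_k^+$ with $\sigma_k^{1/k} > 0$. But with respect to the homogeneous equation \eqref{generalizedkconvexity}, a $C^2$ function with conjugated Hessian having eigenvalues strictly inside $\Gamma_k^+$ and $\sigma_k^{1/k} > 0$ is a strict \emph{sub}solution, not a supersolution. (A supersolution of \eqref{generalizedkconvexity} must satisfy either $\sigma_k^{1/k}\le 0$ in the cone, or fall outside $\overline{\Gamma_k^+}$.) Since both $w$ and $Q$ are subsolutions, \autoref{compatisiontheoryformoregeneralkconvex} gives you nothing, and the asserted bound $\sup_{B_\rho}(w-P) \le o(\rho^2) + C\epsilon\rho^2$ does not follow. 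There is no straightforward way to repair this by flipping the sign of the $\epsilon$ perturbation, because $\lambda(W(x_0)AW(x_0))$ need not lie on $\partial\Gamma_k^+$.

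The paper's proof does not use comparison in this step at all. Once the Hessian measures $\mu^{ij}$ exist (a hypothesis here) and $w$ is Lipschitz, $\partial_i w$ is of bounded variation; the $L^1$ second-order differentiability theorem for $BV$ functions (the argument of \cite{MR3409135}*{Theorem 6.9}, invoked in the proof) then gives, at a.e.\ point $x_0$ (after Lebesgue decomposition and passing to approximate Lebesgue points of $\D w$, $\D^2 w$ and vanishing-density points of the singular part), the $L^1$ expansion
\begin{equation*}
\fint_{B_r(x_0)}\bigl|w(y)-P(y)\bigr|\dd{y}= o(r^2),
\end{equation*}
with $P$ exactly your paraboloid. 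The upgrade from $L^1$ to pointwise $o(r^2)$ is where the PDE enters: writing $h = w - P$ and $\Lambda = |\D^2 w(x_0)|$, the function $h + \Lambda|y|^2$ is again a viscosity subsolution of \eqref{generalizedkconvexity} (adding a convex function preserves the cone constraint), so \autoref{generalizedestimate} applies and yields the weighted H\"older bound
\begin{equation*}
\sup_{y,z\in B_r}\frac{|h(y)-h(z)|}{|y-z|^\alpha} \le C\biggl(\frac{1}{r^\alpha}\fint_{B_{2r}}|h| + r^{2-\alpha}\biggr),
\end{equation*}
from which a density argument gives $\sup_{B_r}|h| = o(r^2)$ --- both the upper and lower bounds at once, without any comparison. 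This also resolves your worry about keeping the argument quantitative for variable $W$: the variable-coefficient difficulty is already absorbed inside \autoref{generalizedestimate} via the hypothesis $|W-\delta_{ij}|_{0;B_1}\le\delta$, and the final step introduces no new freezing error to control. Finally, your Phase 1 (deriving the cone structure of the Hessian measures, the Riesz representation via the dual cone, and $|\mu^{ij}|\le C\mu_0$) is the content of \autoref{checkconditionnaturalgood}; in the present theorem the existence of $\mu^{ij}$ is assumed, and none of the extra structure you extract in Phase 1 is actually used in the paper's proof of this statement.
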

\begin{proof}
In the following, let $\alpha = (2-n/k)/2>0$ and let $\delta$ be as in \autoref{generalizedestimate} which depends on $\alpha$, $n$, and $k$.
	
Since $w$ is Lipschitz and the Borel measures $\mu^{ij}$ satisfy \eqref{punctuallyc2conditionsmdistribution}, it follows that $w$ is differentiable almost everywhere and $\partial_i w$ is of local bounded variation. \par 
By the Lebesgue decomposition theorem, write 
\begin{equation*}
\dd{\mu^{ij}}= \D^2_{ij} w \dd{x} + [\dd{\mu^{ij}}]_{\text{s}},
\end{equation*}
where $\D^2_{ij}w$ and $[\dd{\mu^{ij}}]_{\text{s}}$ represent the absolutely continuous and singular parts of $\mu^{ij}$ with respect to the Lebesgue measure, respectively. 
Then for almost every $x \in \Omega$, 
\begin{equation}\label{punctuallysecondorderdifferentiablecondition}\left\{
\begin{aligned}
\lim_{r \to 0} \fint_{B_r(x)} |\D w(x)-\D w(y)| \dd{y}&=0,\\
\lim_{r \to 0} \fint_{B_r(x)} |\D^2 w(x)-\D^2 w(y)| \dd{y}&=0,\\
\lim_{r \to 0} \frac{|[\dd{\mu^{ij}}]_\text{s}|(B_r(x))}{r^n}&=0.\\
\end{aligned}\right.
\end{equation}
By the proof of \cite[Theorem 6.9]{MR3409135}, at points $x$ where \eqref{punctuallysecondorderdifferentiablecondition} is satisfied, 
\begin{equation}\label{integralsecondordertaylorexpansion}
\fint_{B_r(x)}|w(y)-w(x)-\D w(x) \cdot (y - x)-\frac{1}{2} (y - x)^\intercal  (\D^2 w(x))  (y - x)| \dd{y}= o(r^2).
\end{equation}\par 
For simplicity, assume that $x=0$ and take 
\begin{equation*}
	h(y)=w(y)-w(0)-\D w(0) \cdot y-\frac{1}{2} y^\intercal ( \D^2 w(0)) y.
\end{equation*}
Let $\Lambda = |\D^2 w(0)|$. Then 
\begin{equation}
h(y)+ \Lambda |y|^2= \bigl(w(y)-w(0)-\D w(0) \cdot y\bigr) + \bigl( \Lambda |y|^2 -\frac{1}{2} y^\intercal (\D^2 w(0))  y\bigr),
\end{equation}
is the sum of a viscosity subsolution to \eqref{generalizedkconvexity} and a convex function, hence is a viscosity subsolution to \eqref{generalizedkconvexity}. Therefore 
\begin{equation}\label{generalizedinequality}
\begin{split}
\sup_{y,z\in B_r(0)} \frac{|h(y)-h(z)|}{|y-z|^\alpha} &\leq \sup_{y,z\in B_r(0)} \frac{|(h(y)+ \Lambda |y|^2)-(h(z)+ \Lambda |z|^2)|}{|y-z|^\alpha}\\
& \quad  + 4 \Lambda r^{2 -\alpha}\\
&\leq C(n,k)  \frac{1}{r^\alpha}\fint_{B_{2r}} |h(y_0) + \Lambda |y_0|^2|\dd{y_0} + 4 \Lambda r^{2-\alpha}\\
& \leq C(n,k,\Lambda)  \biggl(\frac{1}{r^\alpha}\fint_{B_{2r}} |h(y_0)|\dd{y_0} + r^{2-\alpha}\biggr),
\end{split}
\end{equation}
where the second inequality follows by applying \autoref{generalizedestimate} to $h(y)+\Lambda |y|^2$ on $B_{2r}(0)$.\par 
From \eqref{integralsecondordertaylorexpansion}, for any $\epsilon, \delta$ with $0 <\epsilon <1$ and $0< \delta^{1/n} <1/2$, there is $r_0(\epsilon,\delta)$ such that for any $r < r_0$ and $y \in B_{r/2}(0)$,
\begin{equation}\label{integralsecondordertaylorexpansionestimate}
|z \in B_r(y); |h(z)|\geq \epsilon r^2| \leq \frac{1}{\epsilon r^2}\int_{B_{2r}(0)}|h(z)|\dd{z} =\frac{1}{\epsilon} o(r^n) < \delta |B_r(y)|.
\end{equation}
From \eqref{integralsecondordertaylorexpansionestimate}, for each $y \in B_{r/2}(0)$, there exists $z \in B_r(y)$ such that 
\begin{align*}
|h(z)| &< \epsilon r^2,  & |y-z|&\leq \delta^{1/n} r.
\end{align*}
Then by \eqref{generalizedinequality},
\begin{equation*}
\begin{split}
	|h(y)| &\leq |h(z)| + |h(y)- h(z)|\\
	& \leq \epsilon r^2+ C |y-z|^\alpha \biggl(\frac{1}{r^\alpha}\fint_{B_{2r}} |h(y_0)|\dd{y_0} + r^{2-\alpha}\biggr)\\
	& \leq \epsilon r^2 + C\delta^{\alpha/n} \biggl(\fint_{B_{2r}} |h(y_0)|\dd{y_0} + r^{2}\biggr)\\
	&= r^2(\epsilon + C \delta^{\alpha/n}) + o(r^2),
\end{split}
\end{equation*}
where the $o(r^2)$ term in the last expression is independent of $\epsilon$ and $\delta$. Since $\epsilon$ and $\delta$ can be taken arbitrarily small, $\sup_{B_r(0)}|h(y)|=o(r^2)$, completing the proof. 
\end{proof}

In the rest of this section, we demonstrate that \eqref{punctuallyc2conditionsmdistribution} is a natural condition to work with. In fact, the viscosity solutions in \autoref{corollary2} and \autoref{corollary3} satisfy \eqref{punctuallyc2conditionsmdistribution}. First, we need the following technical lemma. For $0 \leq \tau <1$, any smooth solution $w_\tau$ to \eqref{modifiedschouten} satisfies \eqref{notinterestingtechnicallemmapt1}. 
\begin{lemma}\label{notinterestingtechnicallemma}
Notations as above.
Let $w_\tau \in C^\infty(\Omega)$ be a family of functions for $\tau \in [0,1)$, such that 
\begin{equation} \label{notinterestingtechnicallemmapt1} 
	\begin{aligned}
	\lambda\big(\sqrt{g^{-1}} \circ (\D_{ij}^2 w_\tau&+\frac{1-\tau}{n-2}(\Delta w_\tau)g_{ij} +\frac{2-\tau}{2} g^{sl}(w_\tau)_s(w_\tau)_l  g_{ij}\\
	&-\Gamma_{ij}^l (w_\tau)_l- (w_\tau)_i \otimes (w_\tau)_j-(H_\tau)_{ij})  \circ \sqrt{g^{-1}} \bigr) \in \Gamma_k^+,
	\end{aligned}
\end{equation}
where the metric $g$ is understood to be a map from $\overline{\Omega}$ to $\text{Sym}_n^+$, $\Gamma_{ij}^l$ are the Christoffel symbols associated to $g$, and $H_\tau \colon \overline{\Omega} \to \text{Sym}_n$ with $\sup_{\tau\in [0,1)} \Vert H_\tau \Vert_{C^0(\overline{\Omega})} < \infty$. \par 
If $|w_\tau|_{0,1;\Omega} < K$ for all $\tau \in [0,1)$, then there exists $C$ independent of $\tau$ such that 
\begin{equation*}
\begin{aligned}
\lambda\bigl(\sqrt{g^{-1}} \circ \D^2 (w_\tau + C |x|^2) \circ \sqrt{g^{-1}}+ \frac{1-\tau}{n-2}\Delta(w_\tau+C|x|^2)  I \bigr) &\in \Gamma_k^+.
\end{aligned}
\end{equation*}
\end{lemma}
\begin{proof}
Since the family $w_\tau$ is uniformly Lipschitz, there exists $C$ independent of $\tau$ such that 
\begin{equation}\label{Lipschitzcovertheorem}
	\frac{2-\tau}{2} g^{sl}(w_\tau)_s(w_\tau)_l  g_{ij} -\Gamma_{ij}^l (w_\tau)_l- (w_\tau)_i \otimes (w_\tau)_j-H_\tau \leq C  \delta_{ij},
\end{equation}
where \eqref{Lipschitzcovertheorem} holds in the matrix sense. Hence the claim follows.
\end{proof}
\autoref{checkconditionnaturalgood} is similar to \cite[Theorem 2.4]{MR2133413}, which studies the case when $W(x)=\delta_{ij}$ for all $x \in \Omega$. We combat the disturbance due to $W$ using the Lipschitz regularity of $w$. 
\begin{theorem}\label{checkconditionnaturalgood}
Notations as above. Let $w_\tau \in C^\infty (\Omega)$ be a family of functions for $\tau \in [0,1)$ such that  
\begin{equation*}
		\lambda\bigl(W \circ \D^2 w_\tau \circ W + \frac{1-\tau}{n-2}\Delta(w_\tau) I\bigr) \in \Gamma_k^+.
\end{equation*}
If $w_\tau$ converges uniformly to $w$ on $\Omega$ and $|w_\tau|_{0,1;\Omega} \leq C < \infty$ for all $\tau \in [0,1)$, then there exist signed measures $\mu^{ij}$ such that 
\begin{equation*}
\int_{\Omega} w(x)  \partial_{ij} \phi \dd{x}=\int_{\Omega} \phi \dd{\mu^{ij}}
\end{equation*}
for all $\phi \in C_c^\infty(\Omega)$.
\end{theorem}
\begin{proof}
We follow the idea of \cite[Theorem 2.4]{MR2133413}.\par 
\noindent \textbf{Step 1.}	For $B= (B^{ij})\in \text{Sym}_n$, define the distribution $T_B \colon C_c^\infty(\Omega) \to \mathbb{R}$ by
	\begin{equation*}
		T_B(\phi) \coloneqq \int_{\Omega} w(x)  \sum_{ijsl}B^{ij} W_{is} \partial_{sl}\phi  W_{lj}\dd{x}.
	\end{equation*}
	Then 
		\begin{equation}\label{2nddifferentiableindistributionsensept1}
		T_B(\phi) = \lim_{\tau \to 1}\int_{\Omega} w_\tau(x)  \sum_{ij}B^{ij} \bigl(\sum_{sl} W_{is} \partial_{sl}\phi  W_{lj} + \delta_{ij}  \frac{1-\tau}{n-2} \Delta \phi\bigr)\dd{x}.
	\end{equation}
Using integration by parts, \eqref{2nddifferentiableindistributionsensept1} becomes 
\begin{equation*}
\begin{aligned}
	T_B(\phi) &= \lim_{\tau \to 1}\int_{\Omega} \sum_{ij}B^{ij}  \bigl[\bigl( \sum_{sl}W_{is} \partial_{sl}w_\tau W_{lj}+ \delta_{ij} \frac{1-\tau}{n-2}\Delta w_\tau\bigr) \bigr]  \phi\dd{x} \\
	&\quad + \lim_{\tau \to 1} \int_{\Omega} \sum_{ijsl} B^{ij}  \partial_s(W_{is}  W_{lj})  \partial_l w_\tau \phi \dd{x} \\
	&\quad + \lim_{\tau \to 1} \int_{\Omega} \sum_{ijsl} B^{ij}  \partial_l(W_{is}  W_{lj})  \partial_s w_\tau  \phi \dd{x}\\
	&\quad + \lim_{\tau \to 1} \int_{\Omega} \sum_{ijsl} B^{ij}  \partial_{sl}(W_{is}  W_{lj})  w_\tau  \phi \dd{x}.
\end{aligned}	
\end{equation*}\par 
 Fix $\lambda(B) \in (\Gamma_2^+)^*$. Since $|w_\tau|_{0,1;\Omega}$ is bounded independent of $\tau$, we can choose $C(|w|_{0,1;\Omega}, W,B)$ large enough so that for $\phi \geq 0$,  
\begin{equation}\label{2nddifferentiableindistributionsensept3}
	\begin{multlined}
T_B(\phi) +C(|w|_{0,1;\Omega}, W,B)    \int_{\Omega} \phi \dd{x}\\
\geq \lim_{\tau\to 1} \int_{\Omega} \sum_{ij} B^{ij} \bigl( \sum_{sl}W_{is} \partial_{sl}w_\tau W_{lj}+ \delta_{ij} \frac{1-\tau}{n-2}\Delta w_\tau\bigr) \phi \geq 0,
	\end{multlined}
\end{equation}
where the last inequality follows from \autoref{dualconenonsense} and the assumption that 
\begin{equation*}
	\lambda \bigl( \sum_{sl}W_{is} \partial_{sl}w_\tau W_{lj}+ \delta_{ij} \frac{1-\tau}{n-2}\Delta w_\tau\bigr) \in \Gamma_k^+.
\end{equation*}\par 
In particular, the left-hand side of \eqref{2nddifferentiableindistributionsensept3} is a positive linear functional on $C_c^\infty (\Omega)$, and the Riesz Representation Theorem \cite{MR0924157} applies. We conclude that there is a signed Borel measure $\mu^B$ such that 
\begin{equation}\label{2nddifferentiableindistributionsensept4}
T_B(\phi) = \int_{\Omega} \phi \dd{\mu^B}. 
\end{equation}
\noindent \textbf{Step 2.} Plugging the matrices in \eqref{dualconematricestest} into \eqref{2nddifferentiableindistributionsensept4}, we obtain some signed Borel measures. By \autoref{dualconenonsense2}, after summing and taking the difference between these measures, we can get a collection of signed Borel measures $\mu^{ij}$ such that
\begin{equation*}
\sum_{sl} \int_{\Omega} w(x)  W_{is} \partial_{sl} \phi(x)  W_{lj} \dd{x}= \int_{\Omega}\phi \dd{\mu^{ij}}. 
\end{equation*} 
Define a new signed Borel measure $\widetilde{\mu}^{ij}$ by 
\begin{equation*}
\widetilde{\mu}^{ij}=\sum_{nm} W^{ni} W^{mj}  \mu^{nm},
\end{equation*}
where $W^{ij}$ is the inverse of $W_{ij}$. Then for $\phi \in C_c^\infty(\Omega)$, since $W$ is smooth on $\Omega$ and $w$ is Lipschitz, using integration by parts,
\begin{align*}
\int_{\Omega} \phi \dd{\widetilde{\mu}^{ij}}&= \sum_{nm} \int_{\Omega}  W^{ni}  \phi  W^{mj}\dd{\mu^{nm}}\\
&=\sum_{slnm}\int_{\Omega} w  W_{ns} \partial_{sl} ( W^{ni}  \phi  W^{mj}) W_{lm} \dd{x}\\
&= \int_{\Omega} w  \partial_{ij} \phi \dd{x}\\
& \quad + \sum_{slnm}\int_{\Omega} w(x)  W_{ns} W_{lm} \partial_{sl} ( W^{ni}  W^{mj})  \phi(x) \dd{x}\\
& \quad - \sum_{slnm}\int_{\Omega} \partial_l\bigl(w(x)  W_{ns} W_{lm} \partial_{s} ( W^{ni}  W^{mj})\bigr)   \phi(x) \dd{x}\\
& \quad - \sum_{slnm}\int_{\Omega} \partial_s \bigl(w(x) W_{ns} \partial_{l} ( W^{ni}  W^{mj}) W_{lm}\bigr)  \phi(x) \dd{x}\\
&\eqqcolon \int_{\Omega} w(x)  \partial_{ij} \phi \dd{x} + \int_{\Omega} \phi \dd{\overline{\mu}^{ij}},
\end{align*}
where $\overline{\mu}^{ij}$ is some signed Borel measure. Hence we have
\begin{equation*}
\int_{\Omega} \phi \dd{(\widetilde{\mu}^{ij} - \overline{\mu}^{ij})}= \int_{\Omega} w(x)  \partial_{ij} \phi \dd{x},
\end{equation*}
completing the proof.
\end{proof}
We conclude this section with a combination of \autoref{punctuallysecondorderdifferentiablemaing}, \autoref{notinterestingtechnicallemma}, and \autoref{checkconditionnaturalgood}.
\begin{theorem}\label{punctuallydifferentiablealmosteverysmoothapproximation}
Let $(M,g)$ be a Riemannian manifold. Let $w_\tau \in C^\infty (M)$ be a family of functions for $0 \leq \tau < 1$, such that
\begin{equation*}\left\{
\begin{aligned}
	|w_\tau|_{0,1;M} \leq C< \infty &\text{ for all }\tau \in [0,1),\\
	\lambda(-g_{w_{\tau}}^{-1}A_{g_{w_{\tau}}}^{\tau})\in\Gamma_{k}^{+}&,
\end{aligned}\right.
\end{equation*}
and $w_\tau$ converges uniformly to $w \in C(M)$ on compact subsets of $M$.\par 
If $k>n/2$, then $w$ is punctually second-order differentiable almost everywhere on $M$.
\end{theorem}
\begin{proof}
Let $\delta(n,k)$ be as in \autoref{punctuallysecondorderdifferentiablemaing}. Fix any $p \in M$. Take a normal coordinate system $(x^1,\cdots, x^n)$ centered at $p$, then $g_{ij}(0)= g_p(\partial_{x^i}, \partial_{x^j})=\delta_{ij}$. In a neighborhood $U$ of $0$, $|\sqrt{g^{-1}}-\delta_{ij}|_{0;U} \leq \delta(n,k)$, where $g^{-1}(x)$ denotes the inverse of the matrix $g_{ij}(x)$.
	
By \autoref{notinterestingtechnicallemma}, there is a constant $C > 0$ independent of $\tau$ such that
\begin{equation}
	\lambda\bigl(\sqrt{g^{-1}} \circ \D^2 (w_\tau + C |x|^2) \circ \sqrt{g^{-1}}+ \frac{1-\tau}{n-2}\Delta(w_\tau+C|x|^2)  I \bigr) \in \Gamma_k^+,
\end{equation}
for all $\tau \in [0,1)$.
Since $w_\tau + C |x|^2$ converges uniformly to $w + C |x|^2$ on $U$ with \begin{equation}
	\sup_{\tau \in [0,1)}|w_\tau + C |x|^2|_{0,1;U}<\infty,
\end{equation} \autoref{checkconditionnaturalgood} implies the existence of signed measures $\mu^{ij}$ such that 
\begin{equation}
\int_{U} (w+C|x|^2) \partial_{ij} \phi \dd{x}=\int_{U} \phi \dd{\mu^{ij}}
\end{equation}
for all $\phi \in C_c^\infty(U)$. By \autoref{punctuallysecondorderdifferentiablemaing}, $w + C|x|^2$, and hence $w$, is punctually second-order differentiable almost everywhere on $U$. Since $p$ is arbitrary, $w$ is punctually second-order differentiable almost everywhere on $M$, completing the proof.
\end{proof}
\section{Local Regularity}\label{localregularitysection}
\autoref{C2alpha} is our main theorem on the regularity of viscosity solutions to \eqref{moregeneralL}.
\begin{theorem}\label{C2alpha}
	Let $w$ be a viscosity solution to \eqref{moregeneralL} in $B_1$. If $w$ is punctually second-order differentiable at $0$,
	then $w$ is $C^{2,\alpha}$ in a neighborhood of $0$.
\end{theorem}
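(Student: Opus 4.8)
The plan is to compare $w$, in a small ball about $0$, with a smooth classical solution $\bar w$ of \eqref{moregeneralL} having the same $2$-jet at $0$, and then to apply \autoref{Savinisgod} to the resulting difference equation. Write $G(M,p,z,x):=f\bigl(\lambda(W(x)\circ(M+L(x,p))\circ W(x))\bigr)-R(x)e^{2z}$. Since $w$ is punctually second order differentiable at $0$, \autoref{punctuallysecondincone} gives that $\lambda\bigl(W(0)\circ(\D^2 w(0)+L(0,\D w(0)))\circ W(0)\bigr)$ lies in the \emph{open} cone $\Gamma$, where every $\partial f/\partial\lambda_i>0$; hence $G$ is uniformly elliptic (and as regular as the data) in a full neighborhood of $(\D^2 w(0),\D w(0),w(0),0)$. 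Exploiting this, I would use the implicit function theorem --- perturbing off the frozen-coefficient equation $G(\D^2 u,\D w(0),w(0),0)=0$, for which the touching paraboloid $P$ is an explicit solution (by \autoref{punctuallysecondincone}), and using the freedom in the Dirichlet data on a small sphere to adjust the interior $2$-jet --- to produce, for some $\rho>0$, a smooth solution $\bar w$ of \eqref{moregeneralL} on $B_\rho(0)$ with $\bar w(0)=w(0)$, $\D\bar w(0)=\D w(0)$, and $\D^2\bar w(0)=\D^2 w(0)$. Since the $2$-jets of $w$ and $\bar w$ agree at $0$, we get $w(x)-\bar w(x)=o(|x|^2)$ as $x\to 0$.

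Set $v:=w-\bar w$ and $F(M,p,z,x):=\widetilde G(\D^2\bar w(x)+M,\D\bar w(x)+p,\bar w(x)+z,x)$, where $\widetilde G$ is a uniformly elliptic $C^2$ extension of $G$ to all of $\text{Sym}_n$ that coincides with $G$ near the $2$-jet of $\bar w$ and is strictly negative wherever $\lambda\notin\overline\Gamma$; such an extension exists because $f=0$ on $\partial\Gamma$ already forces $G<0$ on a collar of $\partial\Gamma$ inside $\Gamma$, so the gluing is compatible with ellipticity. A direct check against \autoref{defnviscositysolution} then shows $v$ is a (standard) viscosity solution of $F(\D^2 v,\D v,v,x)=0$ on $B_\rho(0)$: a test paraboloid touching $v$ from above forces, via the subsolution branch, $\lambda\in\Gamma$, hence $F\geq 0$ there; one touching from below either has $\lambda\in\overline\Gamma$, in which case the clean inequality yields $F\leq 0$, or has $\lambda\notin\overline\Gamma$, in which case $\widetilde G<0$ yields $F\leq 0$ anyway. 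Since $\bar w$ solves \eqref{moregeneralL} we have $F(0,0,0,x)\equiv 0$, and $F$ is uniformly elliptic and $C^2$ near the origin because $\bar w$, being a classical solution, keeps $\lambda$ in a compact subset of $\Gamma$ over $\overline{B_{\rho/2}}$. Now rescale: $v_\rho(x):=\rho^{-2}v(\rho x)$ solves an equation of the same form with the same ellipticity and $C^2$ constants (the rescaling only shrinks the $p$-, $z$- and $x$-dependence), and $|v_\rho|_{0;B_1}=\rho^{-2}\sup_{B_\rho}|v|=o(1)$. Choosing $\rho$ small so that $|v_\rho|_{0;B_1}\leq c_1$, \autoref{Savinisgod} gives $v_\rho\in C^{2,\alpha}(B_{1/2})$; unscaling, $v\in C^{2,\alpha}(B_{\rho/2})$, so $w=\bar w+v\in C^{2,\alpha}(B_{\rho/2})$.

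The two inputs to \autoref{Savinisgod} are where the work lies. The first is the construction of the smooth comparison solution $\bar w$ with the prescribed $2$-jet at $0$: this is exactly where the open-cone conclusion of \autoref{punctuallysecondincone} is used, and it also requires interior smoothness for the now-uniformly-elliptic fully nonlinear equation; notably this step invokes the implicit function theorem rather than the concavity of $\sigma_k^{1/k}$. The second, more easily overlooked, point is legitimizing the passage from a viscosity solution of the cone-constrained problem \eqref{moregeneralL} to a viscosity solution of a genuinely uniformly elliptic equation $F=0$ on $\text{Sym}_n$: one must extend the operator past $\partial\Gamma$ so as to preserve uniform ellipticity while respecting the ``or $\lambda\in\mathbb{R}^n\setminus\overline\Gamma$'' alternative in the definition of viscosity supersolution. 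Granting these, verifying the hypotheses of \autoref{Savinisgod} with constants uniform in $\rho$ is routine bookkeeping.
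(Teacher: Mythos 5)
Your overall strategy is the same as the paper's: use \autoref{punctuallysecondincone} to get into the open cone, build a smooth classical comparison solution near $0$ by perturbing off the touching paraboloid $P$ with the inverse/implicit function theorem, form the difference equation, rescale to contract the lower-order dependence, and invoke \autoref{Savinisgod}. The divergence is in how the comparison solution is produced. You ask for a single smooth solution $\bar w$ whose \emph{entire} $2$-jet at $0$ agrees with that of $w$, obtained by a two-stage IFT that varies the Dirichlet data on a small sphere to hit a prescribed interior jet. This is the step I do not accept as written: making it rigorous requires proving that the Dirichlet-data-to-interior-jet map, linearized at $P$, is a submersion onto the codimension-one constraint manifold $\{\text{jets }J:\D G(J)\cdot J'=0\}$, and you have not carried this out. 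The paper sidesteps that issue entirely: in \autoref{inversemappingsigma2} it solves, for each small $r$, the rescaled Dirichlet problem with boundary data $P_r$ and obtains $w^{(r)}$ with only the quantitative estimate $|w^{(r)}-P_r|_{2,\alpha;B_1}\le C_2\,r^{2+\alpha}$, so the interior jet of $w^{(r)}$ need not match that of $w$; the required smallness $|r^{-2}(w_r-w^{(r)})|_{0;B_{1/2}}\to 0$ is then split into $|r^{-2}(w_r-P_r)|_0=r^{-2}o(r^2)$ from punctual second-order differentiability and $|r^{-2}(P_r-w^{(r)})|_0\le C\,r^{\alpha}$ from the IFT estimate, and \autoref{blowupFsavinasd} guarantees the Savin constants are uniform in $r$. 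Your approach trades this bookkeeping for the harder jet-prescription step; unless you establish the submersion property, your construction of $\bar w$ is a gap, whereas the paper's family construction is a straightforward contraction-mapping argument. One point in your favor: you explicitly extend the operator past $\partial\Gamma$ to a uniformly elliptic $\widetilde G$ that is negative off $\overline\Gamma$, which makes the passage from the cone-constrained viscosity notion \autoref{defnviscositysolution} to a genuine viscosity solution of $F=0$ on all of $\text{Sym}_n$ explicit; the paper leaves this compatibility implicit.
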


The proof strategy of \autoref{C2alpha} is the following.
We construct $C^{2,\alpha}$ local solutions to \eqref{moregeneralL} that approximate the viscosity solution $w$ near points where $w$ is punctually second-order differentiable. These $C^{2,\alpha}$ local solutions are required to construct operators so that \eqref{SavinP0}, \eqref{SavinP1}, \eqref{SavinP2.5}, and \eqref{SavinP2} in \autoref{Savinisgod} apply. We show that for some of these operators, \autoref{Savinisgod} applies. See \autocite[Theorem 1.3]{MR4611402} for a similar argument.
\begin{theorem}\label{inversemappingsigma2}
Notations as above. Let $P$ be a paraboloid such that 
	\begin{equation}\label{inversemappingstartingassumption}\left\{
	\begin{aligned}
	f\bigl[\lambda\bigl( W \circ (\D^2 P + L(\cdot ,\D P) ) \circ W \bigr)\bigr](0)=& R(0) e^{2P(0)}, \\
	\lambda\bigl( W \circ (\D^2 P + L(\cdot ,\D P))  \circ W \bigr)(0) \in& \Gamma.
\end{aligned}\right.
	\end{equation}
Then there are constants $C_1,C_2$ depending on $f,P,\alpha,R,L$, and $W$ such that for any $r^{1-\alpha} \leq C_1$, there is $w^{(r)} \in C^{2,\alpha}(\overline{B}_1) \cap C^{4, \alpha}(B_{1/2})$ satisfying
\begin{equation}\label{propertitysofWupr}\left\{
	\begin{alignedat}{2}
	f\bigl[\lambda\bigl( W_r \circ (\D_y^2 w^{(r)} + L^r(\cdot  ,\D_y w^{(r)}))  \circ W_r \bigr)\bigr](y) &=  R_r e^{2w^{(r)}}(y) \quad  &&\text{ in } B_1,\\
	\lambda\bigl( W_r \circ (\D^2_y w^{(r)} + L^r(\cdot  ,\D_y w^{(r)}) )\circ  W_r \bigr)(y) &\in \Gamma,\\
|w^{(r)} - P_r|_{2,\alpha;B_1},  |\D_y^3 (w^{(r)} - P_r)|_{0,\alpha;B_{1/2}}, |\D_y^4 (w^{(r)} - P_r)|_{0,\alpha;B_{1/2}}   &\leq  r^{2+\alpha}  C_2,\\
	w^{(r)}&= P_r \quad && \text{ on } \partial B_1,
\end{alignedat}\right.
\end{equation}
where
\begin{equation}\label{blowup}\left\{
	\begin{aligned}
		L^r(y,p) &= r^2L_0(ry,p)+ rL_1(ry,p)+ L_2(ry,p),\\
		P_r(y)&=P(ry)+\ln r, \\
		R_r(y) &= R(ry),\\
	W_{r}(y) & = W(ry),
	\end{aligned}\right.
\end{equation}
 for $y \in \overline{B}_1$ and $0 < r \leq 1$.
\end{theorem}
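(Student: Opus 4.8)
The plan is to obtain \autoref{inversemappingsigma2} from an implicit–function–theorem argument applied to the rescaled equation, realized as a contraction mapping; the rescaling \eqref{blowup} is exactly what turns the non–uniformly elliptic equation \eqref{moregeneralL} into a small perturbation of a fixed constant–coefficient uniformly elliptic Dirichlet problem on $B_1$. First I would record the scaling identities. Set $\mathcal{F}_r(u) := f\bigl[\lambda\bigl(W_r\circ(\D^2 u+L^r(\cdot,\D u))\circ W_r\bigr)\bigr]-R_r e^{2u}$. Using the homogeneities $L_j(x,rp)=r^j L_j(x,p)$ from \eqref{structurealconditionsoinlnL} together with the degree–one homogeneity of $f$ in \eqref{conecondition}, one checks that for $u\in C^2(\overline{B}_1)$, writing $u^\sharp(x):=u(x/r)-\ln r$, there holds $\D^2 u(y)+L^r(y,\D u(y))=r^2\bigl(\D^2 u^\sharp+L(\cdot,\D u^\sharp)\bigr)(ry)$, and consequently $\mathcal{F}_r(u)(y)=r^2\,\mathcal{G}(ry)$ where $\mathcal{G}:=f[\lambda(W\circ(\D^2 u^\sharp+L(\cdot,\D u^\sharp))\circ W)]-R\,e^{2u^\sharp}$. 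Taking $u=P_r$, so that $u^\sharp=P$, hypothesis \eqref{inversemappingstartingassumption} is precisely the statement that this $\mathcal{G}$ vanishes at $x=0$; since $\D^2 P$ is constant, $\D P$ affine, $L,W\in C^2$, $R$ smooth and $f\in C^1$, the function $\mathcal{G}$ is Lipschitz near $0$, and tracking how the Hölder norm transforms under the dilation $y\mapsto ry$ gives $\abs{\mathcal{F}_r(P_r)}_{0,\alpha;B_1}\le C r^{2+\alpha}$ with $C=C(f,P,\alpha,R,L,W)$. This is the source of the exponent $2+\alpha$ in \eqref{propertitysofWupr}.

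Next I would linearize $\mathcal{F}_r$ at $P_r$, obtaining a second–order operator $\mathcal{L}_r v=\sum_{k,l}a^{kl}(y)\partial_{kl}v+\sum_k b^k(y)\partial_k v+c(y)v$, and show it is solvable uniformly in $r$. Its leading coefficients are $a^{kl}=\sum_{ij}(W_r)_{ik}\,F^{ij}\,(W_r)_{lj}$, where $F^{ij}$ denotes the derivative of $M\mapsto f(\lambda(M))$, evaluated at $M_r(y)=W_r(y)\circ(\D^2 P_r+L^r(\cdot,\D P_r))(y)\circ W_r(y)=r^2\,W(ry)\circ(\D^2 P+L(\cdot,\D P))(ry)\circ W(ry)$; by degree–zero homogeneity of $F^{ij}$, this equals $F^{ij}$ at a matrix whose eigenvalues are $\lambda(W\circ(\D^2 P+L(\cdot,\D P))\circ W)(ry)$, which lie in a fixed compact subset of the open cone $\Gamma$ for all $ry\in B_r$ once $r$ is small, so by $\partial f/\partial\lambda_i>0$ on $\Gamma$ the operator $\mathcal{L}_r$ is uniformly elliptic on $B_1$ with $r$–independent constants. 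One also checks $b^k=O(r)$ (using that $L_0$ is $p$–independent, $L_1$ linear and $L_2$ quadratic in $p$, while $\D P_r=O(r)$) and $c=-2R_r e^{2P_r}\le 0$, so $\mathcal{L}_r$ converges as $r\to 0^+$ to the constant–coefficient elliptic operator $\mathcal{L}_0=\sum_{k,l}a^{kl}(0)\,\partial_{kl}$ built from the data at $x=0$. By Schauder theory and the maximum principle (nonpositive zeroth order term), plus a perturbation argument off $\mathcal{L}_0$, the Dirichlet problem $\mathcal{L}_r v=h$ in $B_1$, $v=0$ on $\partial B_1$, is uniquely solvable with $\abs{v}_{2,\alpha;B_1}\le C\abs{h}_{0,\alpha;B_1}$, $C$ independent of $r$ for $r$ small.

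Then I would close the argument by a contraction mapping. Write $\mathcal{F}_r(P_r+v)=\mathcal{F}_r(P_r)+\mathcal{L}_r v+N_r(v)$, where $N_r(0)=0$, $DN_r(0)=0$, and, because $f$ is $C^1$ on the relevant compact subset of $\Gamma$ ($C^\infty$ on $\Gamma_k^+$ in the $\sigma_k$ case), $\abs{N_r(v_1)-N_r(v_2)}_{0,\alpha;B_1}\le\omega\bigl(\max_i\abs{v_i}_{2,\alpha;B_1}\bigr)\abs{v_1-v_2}_{2,\alpha;B_1}$ with $\omega(0^+)=0$ uniformly for $r$ small. Solving $\mathcal{F}_r(P_r+v)=0$ is then the same as the fixed–point equation $v=\mathcal{T}_r(v):=-\mathcal{L}_r^{-1}\bigl(\mathcal{F}_r(P_r)+N_r(v)\bigr)$ on $\{v\in C^{2,\alpha}(\overline{B}_1):v|_{\partial B_1}=0\}$; combining the error bound of the first paragraph, the uniform inverse bound of the second, and the properties of $N_r$, one picks $C_2$ and then requires $r$ small — this is what $r^{1-\alpha}\le C_1$ encodes — so that $\mathcal{T}_r$ maps the ball $\{\abs{v}_{2,\alpha;B_1}\le C_2 r^{2+\alpha}\}$ into itself and contracts there. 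The unique fixed point $v^{(r)}$ gives $w^{(r)}:=P_r+v^{(r)}$, with $w^{(r)}=P_r$ on $\partial B_1$ and $\abs{w^{(r)}-P_r}_{2,\alpha;B_1}\le C_2 r^{2+\alpha}$, and one verifies the cone condition in \eqref{propertitysofWupr}: $\lambda\bigl(W_r\circ(\D^2 w^{(r)}+L^r(\cdot,\D w^{(r)}))\circ W_r\bigr)=r^2\lambda\bigl(W\circ(\D^2 P+L(\cdot,\D P))\circ W\bigr)(r\,\cdot)+O(r^{2+\alpha})$, the main term having distance to $\partial\Gamma$ at least a multiple of $r^2$ ($\Gamma$ open cone plus continuity) while the correction is $o(r^2)$, so the eigenvalue vector remains in $\Gamma$ for $r$ small; this also guarantees that $f$ is applied inside its domain throughout the iteration, so the argument is legitimate.

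The hard part is not the abstract implicit function theorem but making every constant uniform in $r$ against the degeneracy of \eqref{moregeneralL}: the eigenvalues of the relevant matrix collapse to the origin like $r^2$, so the distance to $\partial\Gamma$, the ellipticity constants, the bound on the first derivatives of $f$, the modulus $\omega$, and the smallness of the error $\mathcal{F}_r(P_r)$ must all be controlled simultaneously by a single small parameter. The rescaling \eqref{blowup} is engineered so that, after factoring out the natural $r^2$, what remains is a genuine perturbation of the fixed problem $\mathcal{L}_0 v=0$; the only genuinely careful estimate is the Hölder–seminorm computation under $y\mapsto ry$ in the first paragraph, which is what upgrades the trivial $O(r)$ error in $C^0$ to the $O(r^{2+\alpha})$ error in $C^{0,\alpha}$ that the stated bound demands.
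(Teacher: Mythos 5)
The overall strategy is the same as the paper's: rescale to $B_1$ via $y=x/r$, linearize $\mathcal{F}^r$ at $P_r$, show the linearized Dirichlet problem is uniformly solvable with $r$-independent Schauder bounds, and close via contraction mapping. The scaling identities you record for $\mathcal{F}^r[0]$ and the linearization coefficients match the paper's \textbf{Step 1} and \textbf{Step 2}, and the observation that the cone condition is preserved because the main term sits at distance $\gtrsim r^2$ from $\partial\Gamma$ while the correction is $o(r^2)$ is a good explicit point (the paper leaves this implicit).

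However, your treatment of the nonlinear remainder $N_r$ has a gap that breaks the final assembly. You assert
\begin{equation*}
	\abs{N_r(v_1)-N_r(v_2)}_{0,\alpha;B_1}\le\omega\bigl(\max_i\abs{v_i}_{2,\alpha;B_1}\bigr)\abs{v_1-v_2}_{2,\alpha;B_1}, \qquad \omega(0^+)=0 \text{ uniformly in }r,
\end{equation*}
but this uniformity is false. The matrix $W_r\circ(\D^2P_r+L^r(\cdot,\D P_r))\circ W_r$ at which $f$ is evaluated has entries of size $\sim r^2$; a perturbation $v$ with $\abs{v}_{2,\alpha;B_1}=O(1)$ moves this matrix by $O(1)$, i.e.\ by $O(r^{-2})$ in relative terms, so the Lipschitz constant of $D\mathcal{F}^r$ on a fixed-size ball degenerates as $r\to 0$. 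Equivalently, via $\mathcal{F}^r(v)(y)=r^2\mathcal{F}^1(v^\sharp)(ry)$ with $v^\sharp(x)=v(x/r)-\ln r$, the dilation scales the $C^{2,\alpha}$ norm by $\abs{v^\sharp}_{2,\alpha}\sim r^{-2-\alpha}\abs{v}_{2,\alpha}$, so the usable modulus is $\omega_r(s)=\omega(r^{-2-\alpha}s)$, not $\omega(s)$. The paper's \textbf{Step 3} handles exactly this: using an extension lemma it shows $D\mathcal{F}^r$ is $\tfrac{1}{2}C_4^{-1}$-Lipschitz precisely on the ball $\{\abs{v}_{2,\alpha;B_1}\le\delta r^{2+\alpha}/C_6\}$, and \emph{that} is the origin of the exponent $2+\alpha$ in the conclusion, not the error estimate as you claim.

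A related imprecision: you state $\abs{\mathcal{F}_r(P_r)}_{0,\alpha;B_1}\le Cr^{2+\alpha}$ and call this the ``source of the exponent $2+\alpha$.'' But since $\mathcal{G}$ (your notation) is Lipschitz and vanishes at $0$, the sharper bound $\abs{\mathcal{F}_r(P_r)}_{0,\alpha;B_1}\le Cr^{3}$ holds, and the paper uses this. The constraint $r^{1-\alpha}\le C_1$ emerges precisely from the requirement that the contraction map the ball of radius $\sim\delta r^{2+\alpha}$ (set by Step 3) into itself given an error of size $Cr^3$ (set by Step 1): their ratio is $\sim r^{1-\alpha}$. With the pair of bounds $O(r^{2+\alpha})$ (error) and uniform $\omega$ (remainder) as you have them, either the ball-invariance condition becomes an $r$-independent constraint on fixed constants that you have no freedom to enforce, or the constraint $r^{1-\alpha}\le C_1$ has to be introduced by hand. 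You should replace the uniform modulus claim with the shrinking-ball Lipschitz estimate for $D\mathcal{F}^r$, and tighten the error bound to $O(r^3)$; then the contraction argument closes exactly as you outline.
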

\begin{proof}
Let $P$ be the paraboloid in the theorem. Define $L^r$, $P_r$, $R_r$, and $W_r$ by \eqref{blowup}. We also adopt the coordinate transformation 
\begin{align}\label{coordinatetransformation}
	y&=x/r, & y &\in B_1, & x \in B_r.
\end{align}\par 
 Define $\mathcal{F}^r \colon  C^{2,\alpha}(\overline{B}_1) \cap C_0 (\overline{B}_1) \to C^\alpha (\overline{B}_1)$ by
\begin{equation*}
	\mathcal{F}^r[u] =
	f\bigl[\lambda\bigl( W_{r}(y) \circ (\D_y^2 (u+P_r) + L^r(\cdot  ,\D_y (u+P_r)) ) \circ W_{r}(y)\bigr)\bigr] - R_re^{2(u+P_r)},
\end{equation*}
where $\D_y$ are derivatives taken with respect to the $y$-coordinates.\par
We use the inverse mapping theorem to construct a solution. \par 
\noindent \textbf{Step 1.} We measure the $C^\alpha$ norm of $\mathcal{F}^r[0]$, where 
\begin{equation}\label{F0expression}
\mathcal{F}^r[0]= f\bigl[\lambda\bigl( W_{r} \circ (\D_y^2 P_r+ L^r(\cdot, \D_y P_r))\circ  W_{r}\bigr)(y)\bigr]- R_r e^{2P_r}.
\end{equation}
 For any function $u \in  C^{2,\alpha}(\overline{B}_1)$, define $u_r \in  C^{2,\alpha}(\overline{B}_1)$ by 
\begin{equation*}
u_r(y)=u(x).
\end{equation*} 
	By direct calculation, we have 
	\begin{equation}\label{scalingforFr}
		\mathcal{F}^r[u_r](y) = r^{2} \mathcal{F}^1[u](x),
	\end{equation}
	and
	\begin{equation*}
		\sup_{y_1, y_2} \frac{|\mathcal{F}^r[u_r](y_1) - \mathcal{F}^r[u_r](y_2)|}{|y_1 - y_2|^{\alpha}}\leq r^{2+\alpha}\sup_{x_1, x_2} \frac{|\mathcal{F}^1[u](x_1) - \mathcal{F}^1[u](x_2)|}{|x_1 - x_2|^{\alpha}}.
	\end{equation*}\par
	By \eqref{inversemappingstartingassumption} and \eqref{F0expression}, $\mathcal{F}^1[0](0)=0$ and $\mathcal{F}^1[0]$ is $C^1$, we have
	\begin{equation*}
		|\mathcal{F}^{r}[0]|_{0;B_1} \leq r^{2} |\mathcal{F}^1[0]|_{0;B_r} \leq r^{3} C_0,
	\end{equation*}
	and 
	\begin{equation*}
		[\mathcal{F}^{r}[0]]_{0,\alpha;B_1} \leq r^{2+\alpha} [\mathcal{F}^1[0]]_{0,\alpha;B_{r}} \leq r^{3}C_0,
	\end{equation*}
	hence
	\begin{equation}\label{F_r[P_r]decreaserate}
		|\mathcal{F}^{r}[0]|_{0,\alpha;B_1} \leq r^{3} C_0,
	\end{equation}
	for some constant $C_0$ depending on $P$.\par  
\noindent \textbf{Step 2.} We study the Fr\'{e}chet derivative of $\mathcal{F}^{r}$ at $0$. Let $a^{ij}_r,b^i_r,c_r$ be the coefficients of $D \mathcal{F}^r[0]$, so that
	\begin{equation*}
	\D \mathcal{F}^r[0](h) = a^{ij}_r(y) h_{ij}+ b_r^i h_i+ c_r h,
	\end{equation*} where
\begin{equation}\label{abcrdef}\left\{
	\begin{aligned}
	a^{ij}_r(y) &= \sum_{kl}\biggl(\pdv{}{M_{kl}} (f\circ \lambda)\biggr) (W_{r})_{li}  (W_{r})_{jk}, \\
	b_r^s(y)&=\sum_{ij}a^{ij}_r(y)  (\partial_{p^s}L_{ij}^{r}) (y, \D_y P_r),  \\
	c_r(y)&=- 2R_r e^{2P_r}.
\end{aligned}\right.
\end{equation}The subscript $r$ in \eqref{abcrdef} does not refer to a rescaling in $r$.\par 
By \eqref{conecondition}, $\pdvs{f}{\lambda_i}$ is homogeneous of degree zero, and due to the conditions \eqref{structurealconditionsoinlnL}, \eqref{blowup} on $L$ and $L^r$ respectively, we have
	\begin{align}\label{abcblowup}
		a^{ij}_r(y)&=  a^{ij}(x), &
		b^{i}_r(y)&= r b^{i}(x), &
		c_r(y)&= r^{2} c(x),
	\end{align} 
	hence,
	\begin{equation}\label{F_rcoefficientblowup} \left\{
	\begin{aligned}
	|a^{ij}_r|_{0,\alpha;B_1} &\leq |a^{ij}|_{0,\alpha;B_1} , \\
	|b^{i}_r|_{0,\alpha;B_1} &\leq r |b^{i}|_{0,\alpha;B_1} ,\\
	|c_r|_{0,\alpha;B_1} &\leq  r^{2}|c|_{0,\alpha;B_1}. 
\end{aligned}\right.
	\end{equation}
	Define
	\begin{align*}
		\Lambda_r &= |a^{ij}_r|_{0,\alpha;B_1}+ |b^{i}_r|_{0,\alpha;B_1}+|c_r|_{0,\alpha;B_1}, & a^{ij}_r \xi_i \xi_j \geq \lambda_r |\xi|^2.
	\end{align*}
	By \eqref{abcblowup}, \eqref{F_rcoefficientblowup}
	\begin{align*}
		\Lambda_r &\leq \Lambda, & \lambda_r&= \lambda.
	\end{align*}
	
 For any $h \in C^{2,\alpha}(\overline{B}_1) \cap C_0(\overline{B}_1)$, let $w = D\mathcal{F}^r[0](h) \in C^\alpha(\overline{B}_1)$. Since $h =0$ on the boundary $\partial B_1$, and $c_r$ converges to $0$ as $r \to 0$, following the existence and uniqueness theory of elliptic PDE with Dirichlet boundary condition \cite[Corollary 3.8, Theorem 6.8]{MR0473443}, $D\mathcal{F}^r[0]$ is invertible for $r$ sufficiently small. Note that in \autoref{inversemappingsigma2}, $c_r$ is already negative by assumption since $R$ is positive; hence $D\mathcal{F}^r[0]$ is invertible for  $r$ sufficiently small. The argument above is used for the proof of \autoref{variantinversemappingsigma2}, which is a variant of \autoref{inversemappingsigma2} where the corresponding $c_r$ is no longer negative.\par  In particular, the boundary value problem,
 \begin{equation}\label{frechletderivativeFr}\left\{
	\begin{alignedat}{2}
	a_r^{ij}  h_{ij} +  b_r^i  h_i +  c_r h &=  w \quad &&  \text{ in } B_1,\\
	h&=0 \quad &&  \text{ on } \partial B_1,
\end{alignedat}\right.
 \end{equation}
	has a solution in $C^{2,\alpha}(\overline{B}_1)$.  \eqref{frechletderivativeFr} is uniformly elliptic independent of $r$. Thus by classical Schauder estimate \cite[Theorem 6.6]{MR0473443}, 
	\begin{equation*}
		\begin{split}
			|h|_{2,\alpha;B_1} &\leq C_3(n,\alpha, \lambda,\Lambda) \bigl(|h|_{0;B_1}+|w|_{0,\alpha;B_1} \bigr)\\
			& \leq C_4(n,\alpha, \lambda, \Lambda)   |w|_{0,\alpha;B_1},
		\end{split}
	\end{equation*}
where the second inequality comes from \cite[Corollary 3.8]{MR0473443}.\par 
	It follows that
	\begin{equation}\label{DF_rdecreaserate}
		\Vert\D\mathcal{F}^r[0]^{-1}\Vert  \leq  C_4.
	\end{equation}
	By \eqref{F_rcoefficientblowup}, we also have 
	\begin{equation}\label{DF_rdecreaserate1}
		\Vert \D\mathcal{F}^r[0]\Vert  \leq C_5 \Lambda,
	\end{equation} for some constant $C_5$ independent of $r$,
	hence 
	\begin{equation}\label{DF_rdecreaseratelb}
		\Vert \D\mathcal{F}^r[0]^{-1}\Vert  \geq \frac{1}{\Vert \D\mathcal{F}^r[0]\Vert} \geq C_5^{-1}\Lambda^{-1}.
	\end{equation}
	Therefore by \eqref{DF_rdecreaserate} and \eqref{DF_rdecreaseratelb}, $\Vert \D \mathcal{F}^r[0]^{-1}\Vert $ is bounded from above and below independent of $r$.
	\par 
\noindent \textbf{Step 3.} We want to study the Fr\'{e}chet derivative of $\mathcal{F}^{r}$ at functions near $0$.
Let $a^{ij}_u, b^i_u, c_u$ denote the coefficients of $D\mathcal{F}^1[u]$ for $u \in C^{2,\alpha}(\overline{B}_1)\cap C_0(\overline{B}_1)$, such that 
\begin{equation*}
	\D \mathcal{F}^1[u](h)= a_u^{ij}h_{ij}+b^i_u h_i+ c_u h.
\end{equation*}
	By continuity of $L$, there is $\delta > 0$ such that for any $u_1, u_2 $ with $|u_i|_{2,\alpha; B_1}< \delta$, $i=1,2$, there is
	\begin{equation*}
		|a^{ij}_{u_1} - a^{ij}_{u_2}|_{0,\alpha;B_1} + |b^{i}_{u_1} - b^{i}_{u_2}|_{0,\alpha;B_1}+ |c_{u_1} - c_{u_2}|_{0,\alpha;B_1} \leq   \frac{1}{2}C_4^{-1},
	\end{equation*}
	and hence,
	\begin{equation} \label{Dfbeforeblowup}
		\Vert \D\mathcal{F}^1[u_1]-D\mathcal{F}^1[u_2]\Vert \leq \frac{1}{2}C_4^{-1}.
	\end{equation}\par 
	We claim that there exists some universal $C_6 > 0$, such that for any $v_1,v_2$ with 
	\begin{equation}\label{v1v2condition}
	|v_i|_{2,\alpha;B_1} < r^{2+\alpha} \delta/C_6,	\end{equation}
 and $r < 1/3$, then 
	\begin{equation} \label{inversemappingneighborhood}
		\Vert \D\mathcal{F}^r[v_1]-D\mathcal{F}^r[v_2]\Vert  \leq  \frac{1}{2} C_4^{-1}.
	\end{equation}\par 
	To see this, by \cite[Lemma 6.37]{MR0473443}, we can first extend $v_i$ to a function $\widetilde{v}_i$ in $C_0^{2,\alpha}(B_2)$ such that 
	\begin{equation}\label{GTextensionlemma}
		|\widetilde{v}_i|_{2,\alpha;B_2} \leq C_7 |v_i|_{2,\alpha;B_{1}},
	\end{equation}
	for some universal constant $C_7$, and take 
	\begin{equation*}
		u_i(x)=
		\begin{cases}
			\widetilde{v}_i(r^{-1}x) & \text{ if } |x|< 2r\\
			0  & \text{ if }2r \leq |x|\leq 1.	
		\end{cases}
	\end{equation*}
Then $u_i \in C^{2,\alpha}(\overline{B}_1) \cap C_0(\overline{B}_1)$, and  
\begin{align} \label{uvblowuprelation}
v_i(y)&=u_i(x), & x &=ry.
\end{align}
	Take $C_6 =C_7$, by \eqref{GTextensionlemma} and \eqref{v1v2condition}
	\begin{equation}\label{u_idistance}
		|u_i|_{2,\alpha;B_1} \leq r^{-2 -\alpha} |\widetilde{v}_i|_{2,\alpha;B_2} \leq C_7 r^{-2 - \alpha} |v_i|_{2,\alpha;B_1} \leq \delta.
	\end{equation}
 By \eqref{u_idistance}, \eqref{uvblowuprelation}, our choice of $\delta$ and \eqref{Dfbeforeblowup},
	\begin{align*}
\Vert D \mathcal{F}^1[u_1]- D \mathcal{F}^1[u_2]\Vert  &\leq \frac{1}{2} C_4^{-1}  & &\Longrightarrow & \Vert D \mathcal{F}^r[v_1]- D \mathcal{F}^r[v_2]\Vert &\leq \frac{1}{2} C_4^{-1} ,
\end{align*}
where the implication follows from a calculation similar to \eqref{F_rcoefficientblowup} and \eqref{DF_rdecreaserate1}, with $\D \mathcal{F}^1[u_i]$ in place of $\D \mathcal{F}^1[0]$ and $\D \mathcal{F}^r[v_i]$ in place of $\D \mathcal{F}^r[0]$ respectively.\par 
\noindent \textbf{Step 4.}	We check that the inverse mapping theorem applies for all $r$ sufficiently small. The contraction map for our target $0$ function is given by 
	\begin{equation*}
		T_r[v]= \bigl(D\mathcal{F}^r[0]\bigr)^{-1}\bigl[ -\mathcal{F}^r[v]+D\mathcal{F}^r[0](v)\bigr].
	\end{equation*}
	For any $v_1, v_2$ with $|v_i|_{2,\alpha;B_1} \leq r^{2+\alpha} \delta/C_6$, we have by \eqref{DF_rdecreaserate} and \eqref{inversemappingneighborhood},
	\begin{equation}\label{contractivemapping}
	\begin{split}
	&\frac{|T_r[v_1]-T_r[v_2]|_{2,\alpha;B_1}}{|v_1-v_2|_{2,\alpha;B_1}}\\
	&\quad \leq  \frac{\lVert \D\mathcal{F}^r[0]^{-1}\rVert }{|v_1-v_2|_{2,\alpha;B_1}}\bigl\vert \mathcal{F}^r[v_2]-\mathcal{F}^r[v_1]-\D\mathcal{F}^r[0](v_2-v_1)\bigr\vert_{0,\alpha;B_1}\\
	&\quad \leq  \frac{\Vert \D\mathcal{F}^r[0]^{-1}\Vert  }{|v_1-v_2|_{2,\alpha;B_1}} \biggl\vert\int_0^1\bigl( D\mathcal{F}^r[t(v_2)+(1-t)v_1]-D\mathcal{F}^r[0]\bigr)(v_2-v_1)\dd{t}\biggr\vert_{0,\alpha;B_1}\\
	&\quad \leq \lVert \D\mathcal{F}^r[0]^{-1}\rVert  \frac{C_4^{-1}}{2}  \leq \frac{1}{2},
	\end{split}
	\end{equation}
	showing that it is indeed a contraction, see \cite[Theorem 3.1.5]{MR0488101} for details.
	It remains to check that $T_r$ is a mapping from the ball centered at $0$ of radius $r^{2+\alpha}\delta/C_6$ into itself. By \cite[Theorem 3.1.5]{MR0488101}, it suffices to check that 
	\begin{equation}\label{inversemappingfinalinequality}
		|\mathcal{F}^r[0]|_{0,\alpha;B_1} < \frac{1}{2}r^{2+\alpha}\delta/C_6  \Vert \D\mathcal{F}^r[0]^{-1}\Vert^{-1} ,
	\end{equation}
	for $r$ sufficiently small, since then
	\begin{equation*}
	\begin{split}
		|T_r[v]|_{2,\alpha;B_1} &\leq |T_r[0]|_{2,\alpha;B_1} + \frac{1}{2}|v|_{2,\alpha;B_1}\\
		& \leq \lVert D \mathcal{F}^r[0]^{-1}\rVert\lvert \mathcal{F}^r[0]\rvert_{0,\alpha;B_1} + \frac{1}{2}r^{2+\alpha}\delta/C_6\\
		& \leq r^{2+\alpha} \delta/C_6,
	\end{split}
	\end{equation*}
where the first inequality follows from \eqref{contractivemapping}.
	 By \eqref{F_r[P_r]decreaserate}, \eqref{DF_rdecreaserate} and \eqref{DF_rdecreaseratelb}, the left hand side and the right hand side of \eqref{inversemappingfinalinequality} are of order of $r^{3}$ and $r^{2+\alpha}$ respectively. Thus \eqref{inversemappingfinalinequality} holds for all $r \leq r_0$, with $(r_0)^{1-\alpha} =C_8(f,P,\alpha,L,R, W)\delta$, proving the existence of a $w^{(r)}$ satisfying \eqref{propertitysofWupr}.\par 
	 \noindent \textbf{Step 5.} It remains to get estimates on the higher order derivatives of $w^{(r)}$ constructed above. Taking the partial derivative of \eqref{propertitysofWupr} with respect to $y^k$, one obtains
	 \begin{equation}\label{FrechletderivativeofFstandingonlocalsolution}
	 	\overline{a}^{ij}_r \partial_{kij} w^{(r)} + \overline{b}^i_r  \partial_{ki}w^{(r)} + \overline{c}_r  \partial_k w^{(r)}= -\overline{d}_r, 
	 \end{equation}
	 where 
	 	 \begin{equation}\label{overlineabcrdef}\left\{
	 	\begin{aligned}
	 		\overline{a}^{ij}_r(y) &= \sum_{sl}\biggl(\pdv{}{M_{sl}} (f\circ \lambda)\biggr)\bigl(\bigl( W_r \circ (\D_y^2 w^{(r)} + L^r(\cdot  ,\D_y w^{(r)}))  \circ W_r \bigr)(y)\bigr)\\ & \quad \cdot  (W_{r})_{li} \cdot (W_{r})_{js}, \\
	 		\overline{b}_r^s(y)&=\sum_{ij}\overline{a}^{ij}_r(y)  (\partial_{p^s}L_{ij}^{r}) (y, \D_y w^{(r)}),  \\
	 		\overline{c}_r(y)&=- 2R_r e^{2w^{(r)}},\\
	 		\overline{d}_r(y)& = \sum_{sl}\biggl(\pdv{}{M_{sl}} (f\circ \lambda)\biggr)\bigg(\bigl( \partial_k W_r \circ (\D_y^2 w^{(r)}+ L^r(\cdot  ,\D_y w^{(r)})) \circ W_r\bigr)_{sl}\\
	 		&\phantom{= \sum_{sl}\biggl(\pdv{}{M_{sl}} (f\circ \lambda)\biggr)\bigg(}+\bigl( W_r \circ ((\partial_k L^r)(\cdot  ,\D_y w^{(r)})) \circ W_r\bigr)_{sl}\\
	 		&\phantom{= \sum_{sl}\biggl(\pdv{}{M_{sl}} (f\circ \lambda)\biggr)\bigg(}	+\bigl(  W_r \circ (\D_y^2 w^{(r)} + L^r(\cdot  ,\D_y w^{(r)})) \circ \partial_k W_r\bigr)_{sl}
	 		\bigg)\\
	 		&-(\partial_k R_r) e^{2w^{(r)}}.
	 	\end{aligned}\right.
	 \end{equation}
 By \eqref{conecondition}, $\pdvs{f}{\lambda_i}$ is homogeneous of degree zero, and due to the conditions \eqref{structurealconditionsoinlnL}, \eqref{blowup} on $L$ and $L^r$ respectively, the $C^{2, \alpha}$ estimate of $w^{(r)} - P_r$ in \eqref{propertitysofWupr} implies that there exist $\overline{\lambda}$ and $\overline{\Lambda}$, independent of $r$, such that 
 \begin{equation}
\begin{aligned}
\overline{\lambda} |\xi|^2 &\leq \overline{a}_r^{ij} \xi_i \xi_j & |\overline{a}^{ij}_r|_{0,\alpha;B_1}+ |\overline{b}^{i}_r|_{0,\alpha;B_1}+|\overline{c}_r|_{0,\alpha;B_1} & \leq \overline{\Lambda}.
\end{aligned}
 \end{equation} for all $r$ small and $\xi \in \mathbb{R}^n$. Additionally, by \eqref{blowup}, 
 \begin{equation}\label{estimatingdrpt1}
	\begin{aligned}
	|W_r|_{0,\alpha} &=O(1), & |\partial_k W_r|_{0,\alpha}&=O(r),\\
	|R_r|_{0,\alpha} &=O(1), & |\partial_k R_r|_{0,\alpha}&= O(r),\\
	|e^{2w^{(r)}}|_{0,\alpha} & = O(r^2).
	\end{aligned}
 \end{equation}
 and by the estimate $|w^{(r)} - P_r|_{2,\alpha;B_1} = O(r^{2 + \alpha})$, which we established in the previous part, 
 \begin{equation}\label{estimatingdrpt2}
 	\begin{aligned}
|\D_y^2 w^{(r)}|_{0,\alpha} \leq|\D_y^2 (w^{(r)} - P_r)|_{0,\alpha} &+ |\D_y^2 P_r|_{0,\alpha} = O(r^{2 + \alpha}) +  O(r^2) = O(r^2),\\
|L^r(y, \D_y w^{(r)})|_{0,\alpha} &= O(r^2),\\
|(\partial_kL^r)(y, \D_y w^{(r)})|_{0,\alpha} &= O(r^3).
 	\end{aligned}
 \end{equation} 
 Combining \eqref{estimatingdrpt1} and \eqref{estimatingdrpt2}, we obtain 
 \begin{equation}
|\overline{d}_r|_{0,\alpha} = O(r^3).
 \end{equation}\par 
  By classical interior Schauder estimate \cite[Theorem 6.2]{MR0473443}, 
 \begin{equation*}
 \begin{split}
 	|(w^{(r)} - P_r)_k|_{2,\alpha;B_{3/4}} &\leq C_{11}(n,\alpha, \overline{\lambda}, \overline{\Lambda}) \bigl(|(w^{(r)} - P_r)_k|_{0;B_1} + |\overline{d}_r|_{0,\alpha} \bigl)\\
 	& \leq C_{12}r^{2 + \alpha}. 
 \end{split}
 \end{equation*}\par 
 Differentiating \eqref{propertitysofWupr} once again and repeating the argument above, we obtain the following estimate on the fourth order derivative of $w^{(r)} - P_r$, 
\begin{equation*}
	\begin{split}
		|\D^2(w^{(r)} - P_r)|_{2,\alpha;B_{1/2}} \leq C_{13}r^{2 + \alpha}, 
	\end{split}
\end{equation*}
completing the proof. 

\end{proof}
We still need one technical lemma.  \par 
For any $u \in C^{2}(B_1)$, consider $F_u \colon \text{Sym}_n \times \mathbb{R}^n \times \mathbb{R} \times B_1 \to \mathbb{R}$ defined by
\begin{equation}\label{Fuform}
	F_u(M,p,z,x)= e^{-2u} f\bigl[\lambda\bigl( W \circ (M+ \D^2_x u+ L(\cdot ,p+\D_x u))\circ W \bigr)\bigr](x)- R(x)e^{2z}.
\end{equation}
Similarly, we define 
\begin{equation}\label{Furform}
	\begin{split}
	F_u^r(M,p,z,y)&= e^{-2u} f\bigl[\lambda\bigl( W_{r} \circ(r^2M+ \D^2_y u+ L^r(\cdot ,r^2p+\D_y u))  \circ W_{r} \bigr)\bigr](y)\\
	&\quad   - R_r(y)e^{2r^2z}.
	\end{split}
\end{equation}
\begin{lemma}\label{blowupFsavinasd}
Let $u \in C^2(B_1)$ and $u_r$ be the blowup of $u$ such that 
\begin{align*}
u_r(y)&= u(x)+\ln r, & x&= ry. 
\end{align*}
If for some $\lambda, \Lambda, \delta$,  $F_u$ satisfies \eqref{SavinP1} in \autoref{Savinisgod}, then $F^r_{u_r}$ satisfies \eqref{SavinP1}  in \autoref{Savinisgod} for the same constants $\lambda,\Lambda,\delta$ in $B_{1}$.
\end{lemma}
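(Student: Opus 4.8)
The plan is to derive a single scaling identity relating $F^r_{u_r}$ to $F_u$ evaluated at rescaled arguments, and then read off \eqref{SavinP1} and \eqref{SavinP2} from that identity using only that $0 < r \le 1$. First I would record the elementary consequences of $u_r(y) = u(ry) + \ln r$: namely $\D_y u_r(y) = r(\D_x u)(ry)$, $\D^2_y u_r(y) = r^2(\D^2_x u)(ry)$, and $e^{-2u_r(y)} = r^{-2}e^{-2u(ry)}$. Next, writing $r^2 p + \D_y u_r(y) = r\bigl(rp + (\D_x u)(ry)\bigr)$ and combining the decomposition $L = L_0 + L_1 + L_2$ with $L_j(x, rq) = r^j L_j(x, q)$ from \eqref{structurealconditionsoinlnL} and the definition of $L^r$ in \eqref{blowup}, I would verify
\[
	L^r\bigl(y,\, r^2 p + \D_y u_r(y)\bigr) = r^2\, L\bigl(ry,\, rp + (\D_x u)(ry)\bigr),
\]
the point being that the coefficients $r^2, r, 1$ attached to $L_0, L_1, L_2$ in $L^r$ combine with the momentum weights $r^0, r^1, r^2$ to produce the common factor $r^2$ in every term. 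Hence the matrix appearing inside $f$ in \eqref{Furform} equals $r^2 \bigl(M + \D^2_x u + L(\cdot, p + \D_x u)\bigr)(ry)$ placed between two copies of $W(ry) = W_r(y)$; pulling the scalar $r^2$ out through $W_r \circ (\cdot) \circ W_r$, through $\lambda(\cdot)$, and through the degree-one homogeneous $f$, cancelling it against the $r^{-2}$ from $e^{-2u_r}$, and using $R_r(y) = R(ry)$ gives the identity
\[
	F^r_{u_r}(M,\, p,\, z,\, y) = F_u(M,\, rp,\, r^2 z,\, ry), \qquad y \in B_1.
\]

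With this identity both hypotheses transfer at once. For \eqref{SavinP1}: given $M, N \in \text{Sym}_n$ with $\|M\|, \|N\| \le \delta$, $N \ge 0$, and $|p|, |z| \le \delta$, $y \in B_1$, the rescaled data obey $|rp| \le \delta$, $|r^2 z| \le \delta$, $ry \in B_r \subseteq B_1$ since $0 < r \le 1$, while $M$ and $N$ are unchanged; applying \eqref{SavinP1} for $F_u$ at $(M, rp, r^2 z, ry)$ with increment $N$ and using the identity yields
\[
	\Lambda \|N\| \ge F^r_{u_r}(M+N, p, z, y) - F^r_{u_r}(M, p, z, y) \ge \lambda \|N\|.
\]
For \eqref{SavinP2}: the map $\Phi_r(M, p, z, y) = (M, rp, r^2 z, ry)$ is linear, so by the chain rule the derivatives of $F^r_{u_r}$ are those of $F_u$ (evaluated at $\Phi_r$) composed with $\D\Phi_r$, whose entries are among $1, r, r^2$, hence all $\le 1$; moreover $\Phi_r$ sends a $\delta$-neighborhood of $\{(0,0,0,y) : y \in B_1\}$ into a $\delta$-neighborhood of $\{(0,0,0,x) : x \in B_1\}$, so the bounds $\|\D F_u\|, \|\D^2 F_u\| \le K$ descend verbatim to $F^r_{u_r}$.

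The one step requiring genuine care is the bookkeeping in the scaling identity: one must track every power of $r$ — those built into $L^r$ by \eqref{blowup}, those from the momentum homogeneity of $L_0, L_1, L_2$ after the substitution $r^2 p + \D_y u_r = r(rp + \D_x u)$, the $r^2$ from $\D^2_y u_r = r^2 \D^2_x u$, and the $r^2$ produced by the degree-one homogeneity of $f$ — and confirm that the net effect on the argument of $f$ is exactly the factor $r^2$ that cancels the $r^{-2}$ from $e^{-2u_r}$. Everything after the identity $F^r_{u_r}(M, p, z, y) = F_u(M, rp, r^2 z, ry)$ is forced by $r \le 1$.
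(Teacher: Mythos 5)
Your proof is correct and follows essentially the same route as the paper's: both establish the scaling identity $F^r_{u_r}(M,p,z,y) = F_u(M,rp,r^2z,ry)$ by tracking the powers of $r$ through the homogeneity of $L$, $L^r$, $\lambda$, and $f$, and then transfer \eqref{SavinP1} and \eqref{SavinP2} using $0 < r \le 1$. The paper records the identity and writes only that ``the claim follows''; your additional observation that $\Phi_r(M,p,z,y) = (M,rp,r^2z,ry)$ is linear with $\|\D\Phi_r\| \le 1$ and maps the relevant $\delta$-neighborhood of $\{(0,0,0,y)\}$ into that of $\{(0,0,0,x)\}$ is precisely the bookkeeping the paper leaves implicit.
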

\begin{proof}
By \eqref{structurealconditionsoinlnL}, \eqref{blowup}, 
\begin{equation*}
L^r(y,rp)= \sum_{k} r^{2-k}L_k(x,rp)= \sum_kr^2 L_k(x,p)= r^2L(x,p).
\end{equation*}
The claim follows from the following equality,  
\begin{equation}\label{relationshipbetweenFuruandFu}
	\begin{split}
		&F_{u_r}^r(M,p,z,y)\\
		=&e^{-2u_r}  f\bigl[\lambda\bigl( W_r \circ(r^2M+\D_y^2 u_r+ L^r(\cdot,r^2p+\D_y u_r))\circ  W_r \bigr)\bigr](y)   - R_r(y)e^{2r^2z}\\ =&e^{-2u}  f\bigl[\lambda\bigl( W \circ(M+\D_x^2 u+ L(\cdot,rp+\D_x u))\circ  W \bigr)\bigr](x)  - R(x)e^{2r^2z}\\
		=& F_u(M,rp,r^2z,ry),
	\end{split}
\end{equation}
where $ry=x$, completing the proof of \autoref{blowupFsavinasd}.
\end{proof}

\begin{proof}[Proof of \autoref{C2alpha}]
Let $w$ be the viscosity solution in the theorem. Since $w$ is punctually second-order differentiable at $0$, both $\D w(0)$ and $\D^2 w(0)$ are well defined. Let \begin{equation}
	P(x)= w(0)+x \cdot \D w(0) + \frac{1}{2} x^\intercal  (\D^2 w(0))  x,
\end{equation}
and define $P_r$, $W_r$, $L_r$, and $R_r$ by \eqref{blowup} for $0 < r \leq  1$.\par 
 After replacing $P$ by $P_r$, $W$ by $W_r$, $L$ by $L_r$, and $R$ by $R_r$, for some $r$ sufficiently small, we may assume
\begin{equation}
	\lambda\bigl(W \circ (\D^2 P + L(\cdot ,\D P)) \circ W\bigr)(x) \in \Gamma,
\end{equation}
for all $x \in B_1$.\par 
 Pick $\bar{\delta} > 0$ so that $|u-P|_{2,\alpha/2;B_1} < \bar{\delta}$ implies that \eqref{SavinP1} in \autoref{Savinisgod} holds for any $F_{u}$ for some fixed $\lambda, \Lambda, \delta$ in $B_{1}$.\par 

By \autoref{punctuallysecondincone}, we may apply \autoref{inversemappingsigma2} to the paraboloid $P$ and obtain for any $r$ sufficiently small, a function $w^{(r)} \in C^{2,\alpha}(\overline{B}_1)$ satisfying \eqref{propertitysofWupr}. \par 
Let $w_r$ be defined by 
\begin{align}\label{properblowupwu}
	w_r(y)&= w(x)+\ln r, & x&= ry. 
\end{align} By \eqref{Furform}, $r^{-2}(w_r - w^{(r)})$ satisfies 
\begin{equation}\label{wr-w^risasolutiontothenewfunction}
	F^r_{w^{(r)}}\bigl(\D^2\bigl( r^{-2}(w_r - w^{(r)})\bigr), \D \bigl(r^{-2}(w_r - w^{(r)})\bigr), r^{-2}(w_r - w^{(r)}), y\bigr)=0,
\end{equation}
in the viscosity sense. \par  
Since $w^{(r)}$ satisfies \eqref{propertitysofWupr}, it is clear that
\begin{equation}\label{zeroisasolution}
	\begin{multlined}
		F_{w^{(r)}}^r(0,0,0,y)
		=\\ e^{-2w^{(r)}}f\bigl[\lambda\bigl( W_{r} \circ\bigl(\D_y^2 w^{(r)} + L^r(\cdot ,\D_y w^{(r)})\bigr) \circ W_{r}  \bigr)\bigr](y) -  R_r 
		=0,
	\end{multlined}
\end{equation}
hence \eqref{SavinP2.5} in \autoref{Savinisgod} is satisfied.\par 

 By \cite[Lemma 6.37]{MR0473443} and \eqref{propertitysofWupr}, there is $v \in C^{2,\alpha/2}(\overline{B}_1) \cap C^{4, \alpha} (B_{r/2})$ with 
\begin{equation}\label{higherorderestimatesoftheblowup}\left\{
	\begin{aligned}
	|v-P|_{2,\alpha/2;B_1} &\leq 2C_7 r^{-2 -\alpha/2} \vert w^{(r)}-P_r\vert_{2,\alpha;B_{1}} \leq 2r^{\alpha/2} C_2  C_7\\
	|\D^3(v - P)|_{0,\alpha/2; B_{r/2}} & \leq 2C_7 r^{-3 - \alpha/2}\vert w^{(r)}-P_r\vert_{3,\alpha;B_{1/2}} \leq 2r^{\alpha/2-1} C_2  C_7,\\
	|\D^4(v - P)|_{0,\alpha/2; B_{r/2}} & \leq 2C_7 r^{-4 - \alpha/2}\vert w^{(r)}-P_r\vert_{4,\alpha;B_{1/2}} \leq 2r^{\alpha/2-2} C_2  C_7,\\
	w^{(r)}(y)&=v(ry)+\ln r,
\end{aligned} \right.
\end{equation}	
For the construction of $v$, one can also refer to \textbf{Step 3} in the proof of \autoref{inversemappingsigma2}.
By the choice of $\bar{\delta}$ and \autoref{blowupFsavinasd}, $F^r_{w^{(r)}}(M,p,z,y)$
	satisfies \eqref{SavinP1} in \autoref{Savinisgod} for the same $\lambda, \Lambda, \delta$ in $B_{1}$ for all $r$ satisfying \begin{equation*}
		2r^{\alpha/2}  C_2 C_7 \leq \bar{\delta}.
		\end{equation*}  Additionally, by \eqref{relationshipbetweenFuruandFu}, 
		\begin{equation}\label{relationshipbetweenFwrwandFw}
		F_{w^{(r)}}^r(M, p, z, y) = F_{v}(M, rp, r^2z, ry). 
		\end{equation}
		It is then clear from \eqref{higherorderestimatesoftheblowup}, \eqref{relationshipbetweenFwrwandFw}, and the expression of $F_v$, \eqref{Fuform}, that $|\D^2 F_{w^{(r)}}^r|_{0,B_{1/2}}$ has an upper bound independent of $r$. In other words, $F^r_{w^{(r)}}(M,p,z,y)$
		satisfies \eqref{SavinP2} in \autoref{Savinisgod} for the same constant $K$ in $B_{1/2}$ for all $r$ small. \par 
	It remains to get an estimate on $\vert r^{-2}(w_r - w^{(r)})\vert_{0;B_{1/2}}$. By \eqref{properblowupwu} and \eqref{propertitysofWupr}, we have 
	\begin{equation}\label{risasolutiontothenewfunctiodecrease}
		\begin{split}
			\vert r^{-2}( w_r-w^{(r)})\vert_{0;B_{1/2}} &\leq \vert r^{-2}( w_r-P_r)\vert_{0;B_{1/2}} +\vert r^{-2}( P_r-w^{(r)})\vert_{0;B_{1/2}} \\
			&\leq r^{-2}o(r^{2})+r^\alpha  C(P,\alpha),
		\end{split}
	\end{equation} 
	which converges to $0$ as $r \to 0$.\par Since $F_{w^{(r)}}^r$ satisfies \eqref{SavinP1} and \eqref{SavinP2} on $B_{1/2}$ for coefficients independent of $r$, and we have \eqref{zeroisasolution}, \eqref{wr-w^risasolutiontothenewfunction}, and \eqref{risasolutiontothenewfunctiodecrease}, it follows that \autoref{Savinisgod} applies to the pair $(F_{w^{(r)}}^r,r^{-2}(w_r-w^{(r)}))$  on $B_{1/2}$ for $r$ sufficiently small. Therefore $r^{-2}(w_r - w^{(r)})$, hence $w$ is $C^{2,\alpha}$ in a neighborhood of $0$, completing the proof of \autoref{C2alpha}.
\end{proof}
Combining \autoref{C2alpha} and \autoref{2convexity}, we immediately conclude \autoref{maintheorempdeside}. 
\begin{corollary}\label{maintheorempdeside}
	Let $\Gamma \subset \Gamma_k^+$ and $w \in C^{0,1}(B_1)$ be a viscosity solution to \eqref{moregeneralL}. If either 
	\begin{enumerate}
		\item  $k > n/2$ and $W(x)= \delta_{ij}$ for all $x \in B_1$, or 
			\item $k=n$,
	\end{enumerate}
then there is a closed set $S \subset B_1$ and $S$ is of measure $0$ such that $w\in C^{2,\alpha}_{\text{loc}}(B_1 - S)$.
\end{corollary}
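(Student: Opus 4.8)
The plan is to read this off from the two preceding theorems together with a standard elliptic bootstrap. Set
\[
S \coloneqq \{x_0 \in B_1 : w \text{ is not punctually second order differentiable at } x_0\},
\]
the singular set as in \eqref{singularsetclosed}. The first step is to invoke \autoref{2convexity}: in case $k > n/2$ with $W \equiv \delta_{ij}$, or in case $k = n$ (where $W \in \mathrm{Sym}_n^+$ is arbitrary), it gives that $w$ is punctually second order differentiable almost everywhere, so $|S| = 0$.

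Next I would localize. Fix $x_0 \in B_1 \setminus S$, translate it to the origin, and rescale a small ball $B_\rho(x_0) \Subset B_1$ onto $B_1$ — this is precisely the change of variables \eqref{blowup}, composed with a translation, that already appears in the proof of \autoref{C2alpha}. The point to check is that the rescaled function again solves a problem of the form \eqref{moregeneralL} satisfying \eqref{conecondition} and \eqref{structurealconditionsoinlnL}: the cone $\Gamma$ and the function $f$ are untouched, the homogeneity relations $L_k(\cdot, rp) = r^k L_k(\cdot, p)$ concern only the $p$-dependence and survive translation and dilation in $x$, and $W$, $R$ remain $C^2$, resp. smooth and positive. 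Since $w$ is punctually second order differentiable at $x_0$, the rescaled solution is punctually second order differentiable at $0$, so \autoref{C2alpha} yields $w \in C^{2,\alpha}$ in a neighborhood of $x_0$. As $x_0$ was an arbitrary point of $B_1 \setminus S$, this set is open, hence $S$ is closed in $B_1$, and $w \in C^{2,\alpha}_{\mathrm{loc}}(B_1 \setminus S)$.

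The last step is the bootstrap to $C^\infty$ on $B_1 \setminus S$. By \autoref{punctuallysecondincone}, $\lambda\bigl(W \circ (\D^2 w + L(\cdot, \D w)) \circ W\bigr) \in \Gamma$ wherever $w$ is punctually second order differentiable, and on $\Gamma$ one has $\partial f / \partial \lambda_i > 0$; since $w$ is now genuinely $C^{2,\alpha}$ there, \eqref{moregeneralL} is a uniformly elliptic fully nonlinear equation with smooth structural data. Differentiating the equation, the difference quotients of $w$ solve linear uniformly elliptic equations with $C^\alpha$ coefficients, so Schauder estimates give $w \in C^{3,\alpha}_{\mathrm{loc}}$, and iterating yields $w \in C^\infty(B_1 \setminus S)$.

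The corollary is thus largely a matter of assembling \autoref{2convexity} and \autoref{C2alpha}; the only real obstacle — and it is a mild one — is the bookkeeping in the second step, namely checking that translating and rescaling an interior point to the standard configuration of \autoref{C2alpha} preserves all the structural hypotheses on $(f,\Gamma,L,W,R)$, after which \autoref{C2alpha} does the work. The final bootstrap to $C^\infty$ is routine, provided one keeps in mind that it uses smoothness of $f$ on $\Gamma$ (which holds for $f = \sigma_k^{1/k}$) and of $L$, $W$, $R$.
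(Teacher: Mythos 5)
Your proposal is correct and follows exactly the route the paper takes; the paper's own proof is the one-line statement ``Combining \autoref{C2alpha} and \autoref{2convexity}, we immediately conclude,'' and you have filled in precisely the routine steps that this leaves implicit: $|S|=0$ from \autoref{2convexity}, openness of $B_1\setminus S$ and closedness of $S$ from applying \autoref{C2alpha} after a translation and rescaling (whose compatibility with \eqref{conecondition} and \eqref{structurealconditionsoinlnL} you correctly verify, including the preservation of $W\equiv\delta_{ij}$ under $W_r(y)=W(ry)$), and the standard Schauder bootstrap from $C^{2,\alpha}$ to $C^\infty$ once uniform ellipticity is available near each regular point.
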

Next, we give a proof of \autoref{corollary2}.
\begin{proof}[Proof of \autoref{corollary2}]
By \cite{MR1976082}, for any $\tau \in [0,1)$, there exists $w_\tau \in C^\infty(M)$ solving \eqref{modifiedschouten}. Moreover, $\sup_{\tau \in [0,1)} |w_\tau|_{0,1;M}< \infty$. By \cite[Appendix]{MR4210287}, $w_\tau$ converges to a Lipschitz viscosity solution $w$ to \eqref{sigmakproblem}. By \autoref{punctuallydifferentiablealmosteverysmoothapproximation}, $w$ is punctually second-order differentiable almost everywhere on $M$. Applying \autoref{C2alpha} to $w$ at points where $w$ is punctually second-order differentiable, we obtain the desired result.
\end{proof}
\section{Equations of Krylov Type}
\label{Krylovtypesection}
The method in \autoref{localregularitysection} also applies to \eqref{Krylovstandard}. \par 
Let $(B_1,g)$ be a Riemannian manifold. For $0 <\epsilon < \alpha_l \in C^\infty(B_1)$ with $0 \leq l \leq k-2$ and $\alpha \in C^\infty(B_1)$, define 
\begin{equation*}
	\widetilde{f}(\lambda,z,x) = \frac{\sigma_k}{\sigma_{k-1}} \bigl(\lambda\bigr) - \sum_{l=0}^{k-2} \alpha_l(x) e^{(2 (k-l) z)} \frac{\sigma_l}{\sigma_{k-1}} \bigl( \lambda),
\end{equation*}
then \eqref{Krylovstandard} takes the form
\begin{equation}\label{PDEformofKrylov}\left\{
	\begin{aligned}
\widetilde{f}\bigl[\lambda(-g^{-1}A_{g_w}),w,\cdot\bigr] (x) &=-\alpha(x) e^{2w},\\
\lambda(-g^{-1}A_{g_w}) \in \Gamma_{k-1}^+,
	\end{aligned}\right.
\end{equation}
where $g_w^{-1}A_{g_w}= e^{-2w}  g^{-1}A_{g_w}$.
\eqref{PDEformofKrylov} is also a fully nonlinear non-uniformly elliptic PDE. 
\begin{theorem}\cites{MR4767573,MR4278951}\label{elliptickrylov}
Let $w \in C^\infty(B_1)$. If $\lambda(-g^{-1}A_{g_w}) \in \Gamma_{k-1}^+$ and $\alpha_l \geq 0$ for $0 \leq l \leq k-2$, then \eqref{PDEformofKrylov} is elliptic and concave.
\end{theorem}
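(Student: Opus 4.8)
The plan is to reduce the statement to standard concavity and monotonicity inequalities for quotients of elementary symmetric functions, and then transfer these from $\widetilde f$, viewed as a function of the eigenvalues, to the operator appearing in \eqref{PDEformofKrylov}. Fix $x \in B_1$ and $z \in \mathbb{R}$, and set $c_l := \alpha_l(x)\, e^{2(k-l)z} \ge 0$ for $0 \le l \le k-2$. On $\Gamma_{k-1}^+$ we may write
\[
\widetilde f(\cdot, z, x) \;=\; \frac{\sigma_k}{\sigma_{k-1}} \;+\; \sum_{l=0}^{k-2} c_l \Bigl(-\frac{\sigma_l}{\sigma_{k-1}}\Bigr),
\]
a symmetric function of $\lambda$ in which $z$ enters only through the nonnegative scalars $c_l$. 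It therefore suffices to establish: (a) $\sigma_k/\sigma_{k-1}$ is concave and non-decreasing in each $\lambda_i$ on $\Gamma_{k-1}^+$; and (b) for each $0 \le l \le k-2$, $\sigma_l/\sigma_{k-1}$ is convex and non-increasing in each $\lambda_i$ on $\Gamma_{k-1}^+$ (so that $-\sigma_l/\sigma_{k-1}$ is concave and non-decreasing). Granting (a)--(b), a nonnegative linear combination of concave, non-decreasing functions is again concave and non-decreasing, so $\widetilde f(\cdot, z, x)$ is concave and non-decreasing in each $\lambda_i$ on $\Gamma_{k-1}^+$.

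For (a) I would invoke the classical fact (Marcus--Lopes; see the references collected in \cite{MR1976082} and the Krylov-type structure exploited in \cites{MR4767573, MR4278951, MR1284912}) that on the cone $\Gamma_{k-1}^+$, where $\sigma_{k-1} > 0$, the quotient $\sigma_k/\sigma_{k-1}$ is concave and non-decreasing in each variable. For (b) I would use the companion fact that, for $0 \le l < m \le n$, the function $(\sigma_m/\sigma_l)^{1/(m-l)}$ is concave, positive, and non-decreasing in each variable on $\Gamma_m^+$; taking $m = k-1$ makes $(\sigma_{k-1}/\sigma_l)^{1/(k-1-l)}$ concave, positive, and non-decreasing on $\Gamma_{k-1}^+$, hence its reciprocal $(\sigma_l/\sigma_{k-1})^{1/(k-1-l)}$ convex, positive, and non-increasing there. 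Since $k-1-l \ge 1$, composing with the increasing convex map $t \mapsto t^{\,k-1-l}$ on $[0,\infty)$ shows $\sigma_l/\sigma_{k-1}$ is convex and non-increasing on $\Gamma_{k-1}^+$, which is (b).

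Finally I would translate to the PDE. From $A_{g_w} = A_g + S(w,g)$ with $S(w,g) = -\nabla_g^2 w + \nabla_g w \otimes \nabla_g w - \tfrac12 |\nabla_g w|_g^2 g$, the tensor $-g^{-1}A_{g_w}$ is affine in the Hessian $\nabla_g^2 w$ with positive-definite leading coefficient $g^{-1}$, and the conjugation $M \mapsto \sqrt{g^{-1}}\, M \, \sqrt{g^{-1}}$ is linear on $\mathrm{Sym}_n$ and preserves eigenvalues. Since $\widetilde f(\cdot, z, x)$ is symmetric and concave on $\Gamma_{k-1}^+$, Davis's classical result on spectral functions makes $M \mapsto \widetilde f(\lambda(M), z, x)$ concave on the convex set of symmetric matrices with spectrum in $\Gamma_{k-1}^+$, and precomposition with the affine Hessian map preserves concavity; this is the claimed concavity. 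Monotonicity of $\widetilde f$ in $\lambda$ together with positive-definiteness of $g^{-1}$ shows the operator is non-decreasing in $\nabla_g^2 w$, i.e. degenerate elliptic along metrics with $\lambda(-g^{-1}A_{g_w}) \in \Gamma_{k-1}^+$; and as soon as some $\alpha_l > 0$ (as, e.g., in \eqref{Krylovstandard}), the strict positivity of $\partial_{\lambda_i}\sigma_{k-1} > 0$ on $\Gamma_{k-1}^+$, acting through the term $-\sigma_0/\sigma_{k-1} = -1/\sigma_{k-1}$, upgrades this to genuine ellipticity, with constants locally uniform on compact subsets of the cone.

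The main obstacle is fact (a): concavity and monotonicity of $\sigma_k/\sigma_{k-1}$ on the \emph{full} cone $\Gamma_{k-1}^+$, rather than on $\Gamma_k^+$, where quotient concavity is most frequently quoted. One either cites it directly --- it is exactly the structural hypothesis underlying the Krylov-type Yamabe problem studied in \cites{MR4767573, MR4278951} --- or verifies it through the standard second-derivative computation for $\sigma_k/\sigma_{k-1}$ following Lin--Trudinger. Everything after that is a routine assembly of classical symmetric-function inequalities with the eigenvalue-function calculus.
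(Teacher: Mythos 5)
The paper does not give its own proof of this theorem; it is stated as a citation to \cite{MR4767573} and \cite{MR4278951}, so there is no in-paper argument to compare against. Your proposal is a sound, self-contained route, and it likely differs in organization from the cited references. The termwise decomposition $\widetilde f = \sigma_k/\sigma_{k-1} + \sum_l c_l(-\sigma_l/\sigma_{k-1})$ with $c_l\ge 0$ cleanly reduces the claim to two facts, and you correctly isolate the only non-routine one: concavity and monotonicity of $\sigma_k/\sigma_{k-1}$ on the larger cone $\Gamma_{k-1}^+$, not merely on $\Gamma_k^+$. That fact is indeed known (it is, up to an index shift, Huisken--Sinestrari's lemma that $\sigma_{m+1}/\sigma_m$ is concave on $\Gamma_m^+$); monotonicity follows from $\partial_{\lambda_i}(\sigma_k/\sigma_{k-1}) = (\sigma_{k-1;i}^2-\sigma_{k;i}\sigma_{k-2;i})/\sigma_{k-1}^2$ together with Newton's inequality in the remaining $n-1$ variables, which requires no cone hypothesis. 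Your fact (b) is the standard Caffarelli--Nirenberg--Spruck quotient concavity of $(\sigma_m/\sigma_l)^{1/(m-l)}$ on $\Gamma_m^+$, and the reciprocal/power bookkeeping that converts it into convexity and monotone decrease of $\sigma_l/\sigma_{k-1}$ is correct. The final transfer --- Davis's theorem for the spectral function, followed by precomposition with the affine map $\nabla_g^2 w \mapsto -g^{-1}A_{g_w}$ --- is the right way to pass from eigenvalue-space concavity to concavity in the Hessian, and your distinction between degenerate ellipticity for $\alpha_l\ge 0$ versus strict ellipticity once some $\alpha_l>0$ (via the $\sigma_{k-2;i}/\sigma_{k-1}^2>0$ contribution) is also right. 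The references cited by the paper typically establish ellipticity and concavity of Krylov-type operators by a more unified argument that treats $\sigma_k - \sum_l \alpha_l \sigma_l$ (or its quotient form) as a single hyperbolic-polynomial-type quantity; your termwise decomposition is more modular and elementary, at the cost of requiring each summand individually to be concave and monotone on the full cone $\Gamma_{k-1}^+$, a strictly stronger set of inputs than the unified approach needs.
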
 

In view of \autoref{elliptickrylov}, we can define the viscosity solution to \eqref{Krylovstandard} similarly to \autoref{defnviscositysolution}.

 \begin{definition}\label{newKrylovstanderfpde}
 	$w \in C(B_1)$ is a viscosity subsolution to \eqref{Krylovstandard} if for all $x_0 \in B_1$, for all $\varphi \in C^2$ and for all neighborhoods $U$ of $x_0$ with 
 	\begin{align*}
 		\varphi(x_0)&= w(x_0) & \varphi \geq w \text{ in }U,
 	\end{align*}
 	there holds, 
 	\begin{equation*}
 		\begin{aligned}
 			\lambda\bigl(-g^{-1}A_{g_\varphi}\bigr)(x_0) &\in \Gamma_{k-1}^+  \quad \text{ and } \\ \widetilde{f}\bigl[\lambda\bigl(-g^{-1}A_{g_\varphi}\bigr),\varphi, \cdot \bigr](x_0) &\geq -\alpha(x_0)e^{2 w(x_0)}.
 		\end{aligned}
 	\end{equation*} \par 
 	$w \in C(B_1)$ is a viscosity supersolution to \eqref{Krylovstandard} if for all $x_0 \in B_1$, for all $\varphi\in C^2$ and for all neighborhoods $U$ of $x_0$ with
 	\begin{align*}
 		\varphi(x_0)&= w(x_0) & \varphi \leq w \text{ in }U,
 	\end{align*}
 	either 
 	\begin{equation*}
 		\begin{aligned}
 			\lambda\bigl(-g^{-1}A_{g_\varphi}\bigr)(x_0) &\in \Gamma_{k-1}^+  \quad \text{ and } \\ \widetilde{f}\bigl[\lambda\bigl(-g^{-1}A_{g_\varphi}\bigr),\varphi, \cdot \bigr](x_0) &\leq -\alpha(x_0)e^{2 w(x_0)},
 		\end{aligned}
 	\end{equation*}
 	or 
 	\begin{equation*}
 		\lambda\bigl(-g^{-1}A_{g_\varphi}\bigr)(x_0) \in \mathbb{R}^n-\Gamma_{k-1}^+.
 	\end{equation*}\par 
 	$w \in C(B_1)$ is a viscosity solution to \eqref{Krylovstandard} if it is both a viscosity subsolution and a viscosity supersolution to \eqref{Krylovstandard}.
 \end{definition} 
We observe the following result \autoref{Krylovexistence}, which essentially follows from \cite{MR4767573} using an argument in \cite{MR4210287}.
\begin{theorem}\cite{MR4767573}\label{Krylovexistence}
Notations as above. Let $(M,g)$ be a closed Riemannian manifold with $\lambda(-g^{-1}A_{g}) \in \Gamma_{k}^+$. If $\alpha, \alpha_l \in C^\infty(M)$ and $\alpha_l  > 0$ for all $l=0,\cdots, k-2$, then there exists a Lipschitz viscosity solution $w$ to \eqref{PDEformofKrylov} such that $w$ is also a viscosity subsolution to 
\begin{equation}\label{strictlyinsigmakcone}
	\left\{
	\begin{aligned}
		\sigma_{k-1}(-g^{-1}_wA_{g_w}) &= C, \\ \lambda(-g^{-1}_wA_{g_w}) &\in \Gamma_{k-1}^+,	\end{aligned} \right.
\end{equation} for some $C>0$ depending on $\alpha_l$, $\alpha$, $k$, and $(M,g)$. \par 
Moreover, if $k-1>n/2$, then $w$ is punctually second-order differentiable almost everywhere on $M$.
\end{theorem}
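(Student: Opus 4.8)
The plan is to obtain $w$ as the uniform limit of a sequence of smooth solutions of a modified Krylov-type equation, following the continuity method of \cite{MR4767573}, and then to read off the subsolution property \eqref{strictlyinsigmakcone} from the algebraic structure of \eqref{Krylovstandard} together with the strict positivity of the coefficients $\alpha_l$. First I would recall from \cite{MR4767573} that the continuity method produces a one-parameter family $\{w_\tau\}_{\tau\in[0,1)}\subset C^\infty(M)$ of solutions of suitably modified versions of \eqref{PDEformofKrylov} --- in which the base Schouten tensor $A_g$ is replaced by a deformation $A^\tau_g$ with $A^1_g=A_g$, in the spirit of \eqref{modifiedschouten} --- together with a priori bounds $|w_\tau|_{0,1;M}\leq K$ independent of $\tau$. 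These uniform $C^0$ and $C^1$ estimates are the analytic heart of \cite{MR4767573}, and I would quote them rather than reprove them. By the Arzel\`a--Ascoli theorem a subsequence $w_{\tau_j}$ converges uniformly on $M$ to a function $w$ with $|w|_{0,1;M}\leq K$; in particular $w$ is Lipschitz.

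Next I would check that $w$ is a viscosity solution of \eqref{PDEformofKrylov} by the standard stability theorem for viscosity solutions under uniform limits of degenerate elliptic equations. The $w_{\tau_j}$ are smooth, hence viscosity, solutions of the $\tau_j$-modified equations, whose coefficients converge uniformly to those of \eqref{PDEformofKrylov} (in particular $A^{\tau_j}_g\to A_g$), and the operators are degenerate elliptic by \autoref{elliptickrylov}; the usual test-function argument --- perturb a touching paraboloid $\varphi$ by $\pm\epsilon|x-x_0|^2$, locate nearby local extrema $x_j\to x_0$ of $w_{\tau_j}$ minus the perturbed paraboloid, use $\D w_{\tau_j}(x_j)=\D(\varphi\pm\epsilon|x-x_0|^2)(x_j)$, the ordering of the Hessians, the ellipticity of the modified operator, and $w_{\tau_j}(x_j)\to w(x_0)$ --- then passes the (sub/super)solution inequalities of \autoref{newKrylovstanderfpde} to the limit. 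One subtlety is that this argument a priori only yields $\lambda(-g^{-1}A_{g_\varphi})(x_0)\in\overline{\Gamma}_{k-1}^+$ at a touching point, but the strict membership in $\Gamma_{k-1}^+$ demanded by \autoref{newKrylovstanderfpde} is automatic because $\widetilde f$ carries $\sigma_{k-1}$ in its denominator, hence tends to $-\infty$ along $\partial\Gamma_{k-1}^+$, while the right-hand side of \eqref{PDEformofKrylov} stays finite; this is the same mechanism used in \autoref{punctuallysecondincone}.

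The remaining, and main, point is \eqref{strictlyinsigmakcone}. I would isolate it as a purely algebraic lemma: there is $C=C(n,k,\min_M\alpha_0,\max_M|\alpha|)>0$ such that every $\mu\in\Gamma_{k-1}^+$ with $\sigma_k(\mu)+\alpha\,\sigma_{k-1}(\mu)=\sum_{l=0}^{k-2}\alpha_l\,\sigma_l(\mu)$, where each $\alpha_l>0$ on $M$, satisfies $\sigma_{k-1}(\mu)\geq C$. Indeed, Newton's inequality gives $\sigma_{k-1}(\mu)^2\geq c_{n,k}\,\sigma_{k-2}(\mu)\,\sigma_k(\mu)$, the equation together with $\sigma_l(\mu)>0$ for $l\leq k-1$ on $\Gamma_{k-1}^+$ and $\sigma_0\equiv 1$ gives $\sigma_k(\mu)\geq\alpha_0-\alpha\,\sigma_{k-1}(\mu)$, and the Maclaurin inequality gives $\sigma_{k-2}(\mu)\geq c'_{n,k}\,\sigma_{k-1}(\mu)^{(k-2)/(k-1)}$; combining these --- splitting off the cases $\alpha\leq 0$ and $\alpha>0$, the latter according to whether $\alpha\,\sigma_{k-1}(\mu)\leq\frac{1}{2}\alpha_0$ --- yields in each case a positive lower bound for $\sigma_{k-1}(\mu)^{k/(k-1)}$. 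Since along the continuity path only the curvature term is deformed, the same lemma applies to each $w_{\tau_j}$ with a constant $C$ independent of $j$; hence each $w_{\tau_j}$ is a classical subsolution of $\sigma_{k-1}(-g_{w_{\tau_j}}^{-1}A^{\tau_j}_{g_{w_{\tau_j}}})=C$ with eigenvalues in $\Gamma_{k-1}^+$. Applying the stability theorem once more --- now to the degenerate elliptic operator $u\mapsto\sigma_{k-1}(-g_u^{-1}A_{g_u})-C$ restricted to $\Gamma_{k-1}^+$ --- shows that $w$ is a viscosity subsolution of $\sigma_{k-1}(-g_w^{-1}A_{g_w})=C$; the strict membership $\lambda(-g_\varphi^{-1}A_{g_\varphi})(x_0)\in\Gamma_{k-1}^+$ at a touching paraboloid follows from $\partial\Gamma_{k-1}^+=\{\sigma_{k-1}=0\}\cap\overline{\Gamma}_{k-1}^+$, since $\sigma_{k-1}\geq C>0$ forces interiority.

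I expect the main difficulty to be organizational rather than conceptual. One has to line up the precise form of the deformation $A^\tau_g$ used in \cite{MR4767573} with the stability argument so that the limiting equation is exactly \eqref{PDEformofKrylov} and the limiting inequality is exactly \eqref{strictlyinsigmakcone}, and one has to verify that the constant $C$ in the algebraic lemma is genuinely independent of $\tau$ --- which it is, as it depends only on $n$, $k$, $\min_M\alpha_0$, and $\max_M|\alpha|$, none of which moves along the path. The genuinely hard analytic input, namely the uniform $C^0$ and $C^1$ a priori estimates along the continuity path, is not established here but taken from \cite{MR4767573}; this is why \autoref{Krylovexistence} is credited to that paper.
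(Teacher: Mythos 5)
Your proposal is correct and follows essentially the paper's own route: quote from \cite{MR4767573} the family $w_\tau$ of smooth solutions to the modified equation \eqref{PDEkrylovstau} with $C^1$ bounds uniform in $\tau$, check that $\lambda(-g^{-1}A_g)\in\Gamma_k^+$ propagates to $\lambda(-g^{-1}A^\tau_g)\in\Gamma_k^+$ along the path, extract a uniform Lipschitz limit $w$ by Arzel\`a--Ascoli, and pass both the equation \eqref{PDEformofKrylov} and the subsolution inequality \eqref{strictlyinsigmakcone} to the limit via viscosity stability as in the appendix of \cite{MR4210287}. The one genuine addition you make is the explicit Newton/Maclaurin argument showing that any $\mu\in\Gamma_{k-1}^+$ solving the Krylov algebraic relation with $\alpha_l>0$ has $\sigma_{k-1}(\mu)\geq C(n,k,\min\alpha_0,\max|\alpha|)>0$, with $C$ independent of the deformation parameter since only the curvature term moves along the path; the paper instead quotes the resulting bound \eqref{strictlowerboundHeChen} directly from \cite{MR4767573}, so your version is more self-contained on this point.
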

\begin{proof}
In \cite{MR4767573}, {Chen and He} consider the following PDE
\begin{equation}\label{PDEkrylovstau} \left\{
	\begin{aligned}
		A_{g_w}^\tau&= \frac{1}{n-2} \biggl(\text{Ric}_{g_w} - \frac{\tau \cdot \text{Sc}_{g_w}}{2(n-1)}g_w \biggr),\\
		\widetilde{f}\bigl[\lambda(-g^{-1}A_{g_w}^\tau),w,\cdot\bigr] (x) &=-\alpha(x) e^{2w},\\
	\end{aligned}\right. 
\end{equation}
for $0\leq \tau < 1$. They show that when
\begin{equation}\label{AgtauliesinKcone}
	\lambda(-g^{-1}A_{g}^\tau) \in \Gamma_{k}^+,
\end{equation}  and $\alpha_l > 0$ for all $0 \leq l \leq k-2$, there exists $w_\tau \in C^\infty(M)$ such that $w_\tau$ solves \eqref{PDEkrylovstau}, and $|w_\tau|_{C^1(M)}$ is bounded by some constant depending only on $\alpha_l$, $\alpha$, $k$ and $(M,g)$. Moreover, $w_\tau$ satisfies 
\begin{equation}\label{strictlowerboundHeChen}
	\lambda(-g_{w_{\tau}}^{-1}A_{g_{w_{\tau}}}^\tau) \in \{\lambda \in \Gamma_{k-1}^+; \sigma_{k-1}(\lambda)>C\},
\end{equation}
for some $C>0$ depending on $\alpha_l$, $\alpha$, $k$, and $(M,g)$.\par 
For $0\leq \tau < 1$, the assumption $\lambda(-g^{-1}A_{g})\in \Gamma_k^+$ implies that 
\begin{equation*}
	\lambda(-g^{-1}A_{g}^\tau) = \lambda(-g^{-1}A_{g})-\frac{1-\tau}{2(n-1)(n - 2)} \text{Sc}_g \cdot (1,1,\cdots, 1) \in \Gamma_k^+.
\end{equation*}
Hence \eqref{AgtauliesinKcone} is satisfied. \par 
Along some subsequence $\tau_i \to 1$, $w_{\tau_i}$ converges uniformly to a Lipschitz function $w$. By the same argument as in \cite[Appendix]{MR4210287}, $w$ is a Lipschitz viscosity solution to \eqref{PDEformofKrylov} and is a viscosity subsolution to \eqref{strictlyinsigmakcone}. In case $k-1>n/2$, \autoref{punctuallydifferentiablealmosteverysmoothapproximation} applies to $w$, showing that $w$ is punctually second-order differentiable almost everywhere. 
\end{proof}
Due to the proof of \autoref{Krylovexistence}, \autoref{notinterestingtechnicallemma} and \autoref{checkconditionnaturalgood} also apply to the viscosity solution in \autoref{Krylovexistence}.

\autoref{punctuallysecondincone} also holds for the viscosity solution $w$ in \autoref{Krylovexistence}. 
\begin{lemma}
Let $\alpha, \alpha_l \in C^\infty(B_1)$ and $\alpha_l > \epsilon > 0$ for all $l=0,\cdots, k-2$.
Let $w$ be a viscosity solution to \eqref{PDEformofKrylov} that is also a viscosity subsolution to \eqref{strictlyinsigmakcone} for some $C>0$. \par 
 If $w$ is punctually second-order differentiable at $x_0$, then 
\begin{equation*}\left\{
\begin{aligned}
\widetilde{f}(\lambda(-g^{-1}A_{g_w}), w, \cdot )(x_0) &=- \alpha(x_0) e^{2w(x_0)},\\
\lambda(-g_w^{-1}A_{g_w})(x_0) &\in \Gamma_{k-1}^+.
\end{aligned}	
	\right.
\end{equation*}
\end{lemma}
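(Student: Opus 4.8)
The argument follows the pattern of \autoref{punctuallysecondincone}, the one genuinely new feature being that $\widetilde f$ does \emph{not} vanish on $\partial\Gamma_{k-1}^+$ (the terms $\sigma_l/\sigma_{k-1}$ with $l<k-1$ blow up there), so one cannot read off ellipticity from the behaviour of $\widetilde f$ at the boundary of the cone. The extra hypothesis that $w$ is a viscosity subsolution to \eqref{strictlyinsigmakcone} is precisely what lets us place the relevant $2$-jet in the \emph{interior} of $\Gamma_{k-1}^+$, with a quantitative margin.

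Let $P$ be the paraboloid with $w(x)=P(x)+o(|x-x_0|^2)$ as $x\to x_0$, so that $\D P(x_0)=\D w(x_0)$ and $\D^2 P(x_0)=\D^2 w(x_0)$. Since $-g^{-1}A_{g_\varphi}(x_0)$ and $-g_\varphi^{-1}A_{g_\varphi}(x_0)$ depend only on the $2$-jet $(\varphi(x_0),\D\varphi(x_0),\D^2\varphi(x_0))$ and are positive multiples of one another, it suffices to prove the two assertions with $P$ in place of $w$. For $\epsilon>0$ set $\varphi_\epsilon^{\pm}=P\pm\epsilon|x-x_0|^2$, so that $\varphi_\epsilon^{+}\ge w\ge\varphi_\epsilon^{-}$ in a neighbourhood of $x_0$, with equality at $x_0$ and $\varphi_\epsilon^{\pm}(x_0)=w(x_0)$.

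\textbf{Step 1: the $2$-jet lies in the open cone.} Testing the subsolution property for \eqref{strictlyinsigmakcone} against $\varphi_\epsilon^{+}$ gives $\lambda(-g_{\varphi_\epsilon^{+}}^{-1}A_{g_{\varphi_\epsilon^{+}}})(x_0)\in\Gamma_{k-1}^+$ and $\sigma_{k-1}(-g_{\varphi_\epsilon^{+}}^{-1}A_{g_{\varphi_\epsilon^{+}}})(x_0)\ge C$ for every $\epsilon>0$. As $\epsilon\to 0$ the matrices $-g_{\varphi_\epsilon^{+}}^{-1}A_{g_{\varphi_\epsilon^{+}}}(x_0)$ converge to $-g_P^{-1}A_{g_P}(x_0)$, so, using that $\{\lambda\in\overline{\Gamma}_{k-1}^+:\sigma_{k-1}(\lambda)\ge C\}=\{\lambda\in\Gamma_{k-1}^+:\sigma_{k-1}(\lambda)\ge C\}$ (because $\sigma_{k-1}$ vanishes on $\partial\Gamma_{k-1}^+$), we conclude $\lambda(-g_P^{-1}A_{g_P})(x_0)\in\Gamma_{k-1}^+$. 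This is the cone assertion of the lemma, and it also shows that $\widetilde f(\cdot,w(x_0),x_0)$ is continuous (indeed $C^1$) near $-g^{-1}A_{g_P}(x_0)$.

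\textbf{Step 2: the equation.} Since $-g^{-1}A_{g_{\varphi_\epsilon^{\pm}}}(x_0)=-g^{-1}A_{g_P}(x_0)\pm 2\epsilon\,g^{-1}(x_0)$ differs from the interior point $-g^{-1}A_{g_P}(x_0)$ by a symmetric matrix of size $O(\epsilon)$, for all small $\epsilon$ we still have $\lambda(-g^{-1}A_{g_{\varphi_\epsilon^{\pm}}})(x_0)\in\Gamma_{k-1}^+$. Feeding $\varphi_\epsilon^{+}$ into the subsolution inequality and $\varphi_\epsilon^{-}$ into the supersolution inequality for \eqref{PDEformofKrylov} — in the latter the alternative $\lambda\notin\Gamma_{k-1}^+$ is excluded by the previous sentence — yields, for all small $\epsilon$,
\begin{equation*}
\widetilde f\bigl[\lambda(-g^{-1}A_{g_{\varphi_\epsilon^{-}}}),\varphi_\epsilon^{-},\cdot\bigr](x_0)\;\le\;-\alpha(x_0)e^{2w(x_0)}\;\le\;\widetilde f\bigl[\lambda(-g^{-1}A_{g_{\varphi_\epsilon^{+}}}),\varphi_\epsilon^{+},\cdot\bigr](x_0).
\end{equation*}
Letting $\epsilon\to 0$ and using the continuity from Step 1 sandwiches $\widetilde f[\lambda(-g^{-1}A_{g_P}),w,\cdot](x_0)$ between $-\alpha(x_0)e^{2w(x_0)}$ and itself, which gives the equation.

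\textbf{Expected main obstacle.} The only non-routine point is Step 1: one must use the auxiliary subsolution property to \eqref{strictlyinsigmakcone} (available for the solution constructed in \autoref{Krylovexistence}) to land in the \emph{open} cone, since, unlike in \autoref{punctuallysecondincone}, the operator $\widetilde f$ gives no usable information on $\partial\Gamma_{k-1}^+$. The remaining steps are the verbatim limiting argument of \autoref{punctuallysecondincone}, together with the elementary observation that $-g^{-1}A_{g_\varphi}$ and $-g_\varphi^{-1}A_{g_\varphi}$ are positive multiples of one another and hence have eigenvalues in the same cone.
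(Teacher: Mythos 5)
Your proof is correct and takes essentially the same approach as the paper's: use the auxiliary subsolution property to \eqref{strictlyinsigmakcone} with the test function $P+\epsilon\abs{x-x_0}^2$ to obtain the quantitative lower bound $\sigma_{k-1}\geq C>0$, which survives the limit $\epsilon\to 0$ and places the $2$-jet in the \emph{open} cone $\Gamma_{k-1}^+$. Your Step~2 spells out the sandwich argument for the equation, which the paper leaves implicit (it stops after establishing the cone condition, as in the proof of \autoref{punctuallysecondincone}); the additional detail and the observation that $-g^{-1}A_{g_\varphi}$ and $-g_\varphi^{-1}A_{g_\varphi}$ are positive scalar multiples are both correct and harmless.
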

\begin{proof}
Let $P$ be the paraboloid such that 
\begin{equation}\label{punctuallysecondorderwde2}
	w(x)=P(x)+o(|x-x_0|^2). 
\end{equation}
Then by \eqref{punctuallysecondorderwde2}, for $\epsilon>0$, 
\begin{align*}
	P(x_0)&= w(x_0) & P(x) + \epsilon |x-x_0|^2 &\geq w(x) \text{ in a neighborhood of }x_0.
\end{align*}
Since $w$ satisfies \eqref{strictlyinsigmakcone} in the viscosity sense, 
\begin{equation*}\left\{
\begin{aligned}
	\lambda(-g_{P+\epsilon |x-x_0|^2}^{-1}A_{g_{P+\epsilon |x-x_0|^2}})(x_0) &\in\Gamma_{k-1}^+ ,\\ \sigma_{k-1}(\lambda(-g_{P+\epsilon |x-x_0|^2}^{-1}A_{g_{P+\epsilon |x-x_0|^2}}))(x_0) \geq C &> 0.
\end{aligned}\right.
\end{equation*}
Pushing $\epsilon \to 0$, we conclude that 
\begin{equation*}
	\lambda(-g_{P}^{-1}A_{g_{P}})(x_0) \in\Gamma_{k-1}^+.
\end{equation*}
The remaining part of the proof is identical to that of \autoref{punctuallysecondincone}.
\end{proof}
 Likewise, \autoref{inversemappingsigma2} also holds for viscosity solutions to \eqref{PDEformofKrylov} with minor changes in the setup of the proof, which we state below. In the following, we consider \eqref{PDEformofKrylov} in local coordinates. Take $W_{ij}(x)=\sqrt{g^{ij}(x)}$ and define $L$ by \eqref{Lforschouten}.
\begin{proposition}\label{variantinversemappingsigma2}
Notations as above. Let $\alpha, \alpha_l \in C^\infty(B_1)$ and $\alpha_l > \epsilon > 0$ for all $l=0,\cdots, k-2$. Let $P$ be a paraboloid such that 
	\begin{equation}\label{inversemappingstartingassumptionforgeneral}\left\{
	\begin{aligned}
		\widetilde{f}\bigl[\lambda\bigl( W \circ (\D^2 P + L(\cdot ,\D P) )\circ W \bigr)\bigr](0)=&-\alpha(0) e^{2P(0)}, \\
		\lambda\bigl( W \circ (\D^2 P + L(\cdot ,\D P) ) \circ W \bigr)(0) \in & \Gamma_{k-1}^+.
	\end{aligned}\right.
\end{equation}
Then there exist constants $C_9, C_{10}$, depending on $k, P,\gamma,\alpha, \alpha_l$, and $W$, such that for all $r^{1-\gamma} \leq C_9$, there is $w^{(r)} \in C^{2,\gamma}(\overline{B}_1)$ satisfying 
\begin{equation*}\left\{
	\begin{alignedat}{2}
			\widetilde{f}^r\bigl[\lambda\bigl( W_r \circ (\D^2_y w^{(r)} + L^r(\cdot  ,\D_y w^{(r)}))  \circ W_r \bigr),w^{(r)},\cdot \bigr](y) &=  -\alpha_r e^{2w^{(r)}} \quad  &&\text{ in } B_1,\\
		\lambda\bigl( W_r \circ (\D^2 w^{(r)} + L^r(\cdot  ,\D_y w^{(r)}))  \circ W_r \bigr)(y)&\in \Gamma_{k-1}^{+} && \text{ in }B_1,\\
		|w^{(r)} - P_r|_{2,\gamma;B_1}, |\D_y^3 (w^{(r)} - P_r)|_{0,\gamma;B_{1/2}}, |\D_y^4 (w^{(r)} - P_r)|_{0,\gamma;B_{1/2}}   &\leq r^{2 + \gamma}C_{10},\\
		w^{(r)}&= P_r \quad && \text{ on } \partial B_1,	
	\end{alignedat}\right.
\end{equation*}
where 
\begin{equation}\label{blowupnew}\left\{
	\begin{aligned}
		L^r(y,p) &= r^2L_0(ry,p)+ rL_1(ry,p)+ L_2(ry,p),\\
		P_r(y)&=P(ry)+\ln r, \\
		\alpha_{l,r}(y) &= \alpha_l(ry),\\
		 \alpha_r(y) &= \alpha(ry),\\
				\widetilde{f}^r(\lambda, z,y)&= \frac{\sigma_k}{\sigma_{k-1}} \bigl(\lambda\bigr) -\sum_{l=0}^{k - 2} \alpha_{l,r}(y) e^{(2 (k-l) z)} \frac{\sigma_l}{\sigma_{k-1}} \bigl( \lambda),\\
			 W_{r}(y) &= W(ry),
	\end{aligned}\right.
\end{equation}
for $y \in B_1$ and $0< r \leq 1$.
\end{proposition}
\begin{proof}
Let $P$ be the paraboloid in the proposition. Define $P_r$, $\alpha_{l,r}$, $\alpha_r$, $W_r$, $\widetilde{f}^r$ and $L^r$ by \eqref{blowupnew}. We also adopt the coordinate transformation defined by \eqref{coordinatetransformation}.

Define $\widetilde{\mathcal{F}}^r \colon  C^{2,\gamma}(\overline{B}_1) \cap C_0 (\overline{B}_1) \to C^\gamma (\overline{B}_1)$ by
\begin{equation}\label{newFr}
\begin{aligned}
	\widetilde{\mathcal{F}}^r[u] &=
	\widetilde{f}^r\bigl[\lambda\bigl( W_{r}(y) \circ (\D_y^2 (u+P_r) + L^r(\cdot  ,\D_y (u+P_r)) ) \circ W_{r}(y)\bigr),\\
	& \qquad u+P_r,y\bigr]  + \alpha_re^{2(u+P_r)}.
\end{aligned}
\end{equation}
We need only to check that \eqref{scalingforFr} and \eqref{abcblowup} still hold for $\widetilde{\mathcal{F}}^r$ newly defined by \eqref{newFr}. The remaining steps are identical. \par 
We have 
\begin{equation}\label{newscalingFr}
	\begin{split}
\widetilde{\mathcal{F}}^r[0](y)&= \widetilde{f}^r\bigl[\lambda\bigl( W_{r}(y)\circ (\D_y^2 P_r + L^r(\cdot  ,\D_y P_r)) \circ W_{r}(y)\bigr), P_r, y\bigr] +\alpha_r e^{2(P_r)}\\
&= r^2 \widetilde{f}\bigl[\lambda\bigl( W_{r}(y)\circ (\D_x^2 P + L(\cdot  ,\D_x P)) \circ W_{r}(y)\bigr), P, x\bigr] + r^2\alpha(x) e^{2(P)}\\
&= r^2 \widetilde{\mathcal{F}}[0](x).
	\end{split}
\end{equation}\par 
Now let $a_r^{ij}, b^i_r,c_r$ be the coefficients of $\D \widetilde{\mathcal{F}}^r[0]$, so that 
\begin{equation*}
\D \widetilde{\mathcal{F}}^r[0](h)= a_r^{ij} h_{ij} +b^i_rh_i+c_rh.
\end{equation*}
Then 
\begin{equation}\label{newarbrcr}\left\{
\begin{aligned}
a_r^{ij}(y)&= \sum_{st}\biggl(\pdv{(\sigma_k/\sigma_{k-1}) \circ \lambda}{M_{st}} -\sum_{l=0}^{k-2} \alpha_{l,r}  e^{2(k-l)P_r }\pdv{(\sigma_l/\sigma_{k-1}) \circ \lambda}{M_{st}}\biggr)\\
	& \quad  (W_{r})_{ti}  (W_{r})_{js},\\
b_r^l(y)&= \sum_{ij} a_r^{ij}  (\partial_{p^l}L_{ij}^r)(y,\D_y P_r),\\
c_r(y)&=- \sum_{l=0}^{k-2}2\alpha_{l,r}(y) (k-l) e^{2(k-l)P_r} \frac{\sigma_l}{\sigma_{k-1}} + 2\alpha_r(y)e^{2P_r}.
\end{aligned}\right.
\end{equation}
Since $\sigma_{k}/\sigma_{k-1}$ is homogeneous of degree $1$ and $\sigma_l/ \sigma_{k-1}$ is homogeneous of degree $l-k+1$, we deduce from \eqref{newarbrcr} that 
	\begin{align}\label{newabcblowup}
	a^{ij}_r(y)&=  a^{ij}(x), &
	b^{i}_r(y)&= r b^{i}(x), &
	c_r(y)&= r^{2} c(x).
\end{align} 
With \eqref{newscalingFr} and \eqref{newabcblowup}, the proof is complete.
\end{proof}
Again, let $L$ be defined by \eqref{Lforschouten}, and $L^r$, $W_r$, $\alpha_r$, and $\widetilde{f}^r$ be defined by \eqref{blowupnew}.
For any $u \in C^{2}(B_1)$, consider $\widetilde{F}_u \colon \text{Sym}_n \times \mathbb{R}^n \times \mathbb{R} \times B_1 \to \mathbb{R}$ defined by
\begin{equation}\label{Fufornewm}
	\begin{split}
	\widetilde{F}_u(M,p,z,x)&= e^{-2u}\cdot \widetilde{f}\bigl[\lambda\bigl( W(x)\circ(M+ \D^2_x u+ L(x ,p+\D_x u)) \circ W(x)\bigr),\\
	& \qquad u+z,\cdot \bigr](x)+ \alpha(x)e^{2z}.
	\end{split}
\end{equation}
Similarly, we define 
\begin{equation}\label{Furfornewm}
	\begin{split}
		\widetilde{F}_u^r(M,p,z,y)&= e^{-2u}\cdot \widetilde{f}^r\bigl[\lambda\bigl( W_r(y)\circ(r^2M+ \D^2_y u+ L^r(y ,r^2p+\D_y u))\\
		&\qquad \circ W_r(y)\bigr),u+r^2 z,\cdot \bigr](y)+ \alpha_r(y)e^{2r^2z}.
	\end{split}
\end{equation}\par 
Following the same steps of calculation, \autoref{blowupFsavinasd} holds for \eqref{Fufornewm} and \eqref{Furfornewm} in place of \eqref{Fuform} and \eqref{Furform}. The same proof of \autoref{C2alpha} works for viscosity solutions to \eqref{newKrylovstanderfpde} with no modifications necessary. We arrive at \autoref{C2alphaforkrylov}.
\begin{theorem}\label{C2alphaforkrylov}
	Let $\alpha, \alpha_l \in C^\infty(B_1)$ and $\alpha_l > \epsilon > 0$ for all $l=0,\cdots, k-2$.
	Let $w$ be a viscosity solution to \eqref{PDEformofKrylov} in $B_1$ that is also a viscosity subsolution to \eqref{strictlyinsigmakcone} for some $C>0$. If $w$ is punctually second-order differentiable at $0$,
	then $w$ is smooth in a neighborhood of $0$.
\end{theorem}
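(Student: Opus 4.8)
The plan is to run the same machinery that proves \autoref{C2alpha}, now fed through the Krylov analogues of \autoref{punctuallysecondincone}, \autoref{inversemappingsigma2}, and \autoref{blowupFsavinasd} established in this section, and then to append one bootstrapping step to upgrade the resulting $C^{2,\gamma}$ regularity to $C^\infty$. First I would take $P$ to be the second-order Taylor paraboloid of $w$ at $0$, which exists since $w$ is punctually second order differentiable there. By the Krylov version of \autoref{punctuallysecondincone} (the lemma proved above for viscosity solutions of \eqref{PDEformofKrylov} that are also viscosity subsolutions of \eqref{strictlyinsigmakcone}), the eigenvalues $\lambda(-g_P^{-1}A_{g_P})(0)$ lie in the \emph{open} cone $\Gamma_{k-1}^+$, so $P$ satisfies the compatibility condition \eqref{inversemappingstartingassumptionforgeneral}. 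After a blow-up I would replace $P$ by $P_r$ for $r$ small enough that the corresponding eigenvalues stay in $\Gamma_{k-1}^+$ throughout $B_1$, simultaneously rescaling $W$, $L$, $\alpha$, $\alpha_l$ as in \eqref{blowupnew}.

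Next I would invoke the Krylov version of \autoref{inversemappingsigma2} (the proposition proved above) to produce, for every sufficiently small $r$, a function $w^{(r)} \in C^{2,\gamma}(\overline{B}_1)$ solving the blown-up Krylov equation with boundary data $P_r$ and satisfying $|w^{(r)}-P_r|_{2,\gamma;B_1} \le r^{2+\gamma} C_{10}$. Setting $w_r(y) = w(ry) + \ln r$, the rescaled difference $r^{-2}(w_r - w^{(r)})$ is then a viscosity solution of $\widetilde{F}^r_{w^{(r)}}\bigl(r^{-2}(w_r-w^{(r)})\bigr)=0$, with $\widetilde{F}^r_{w^{(r)}}$ the operator \eqref{Furfornewm} built from $w^{(r)}$. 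The point of the construction is that $\widetilde{F}^r_{w^{(r)}}(0,0,0,y)=0$ identically (this is just the equation satisfied by $w^{(r)}$), so hypothesis \eqref{SavinP2.5} of \autoref{Savinisgod} holds for free; that $w^{(r)}$, after undoing the scaling via the extension lemma exactly as in the proof of \autoref{C2alpha}, is a genuinely small perturbation of $P$ in $C^{2,\gamma/2}$; and that, by the Krylov analogue of \autoref{blowupFsavinasd} applied to \eqref{Fufornewm}--\eqref{Furfornewm}, hypotheses \eqref{SavinP1} and \eqref{SavinP2} hold with constants independent of $r$. Finally, punctual second order differentiability of $w$ at $0$ gives $|w_r - P_r|_{0;B_{1/2}} = o(r^2)$, which together with the estimate on $w^{(r)}$ forces $|r^{-2}(w_r - w^{(r)})|_{0;B_{1/2}} \to 0$, so \autoref{Savinisgod} applies and yields $w \in C^{2,\gamma}$ in a neighborhood of $0$.

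The step I expect to be the real content is verifying the uniform ellipticity hypothesis \eqref{SavinP1} for the Krylov operator near $P$. Unlike the pure $\sigma_k$ case, $\widetilde{f}$ carries the extra exponential factors $e^{2(k-l)z}$ and is a sum of the non-concave ratios $\sigma_l/\sigma_{k-1}$, so ellipticity is not automatic; here I would lean on \autoref{elliptickrylov}, which guarantees that $\widetilde{f}$ is elliptic (and concave) precisely when the relevant eigenvalues lie in $\Gamma_{k-1}^+$. Since those eigenvalues for $P$ at $0$ lie in the interior of $\Gamma_{k-1}^+$, they remain in a fixed compact subset of $\Gamma_{k-1}^+$ under small perturbations of the matrix argument and of $z$, and on such a compact set the derivatives $\partial\widetilde{f}/\partial M_{ij}$ are bounded above and bounded below away from zero — this is exactly the two-sided ellipticity required, while the bounds on $\partial\widetilde{f}$ and $\partial^2\widetilde{f}$ needed for \eqref{SavinP2} follow from smoothness of $g$, $W$, $\alpha$, $\alpha_l$. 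The conjugation by the smooth positive matrix $W$ and the smooth lower-order term $L$ contribute only bounded, $r$-independent perturbations, just as tracked in the proof of the Krylov inverse-function proposition above.

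Once \autoref{Savinisgod} delivers $w \in C^{2,\gamma}$ in a neighborhood of $0$, I would close the argument with a routine bootstrap. At such points \eqref{PDEformofKrylov} is uniformly elliptic (by \autoref{elliptickrylov}) with smooth coefficients and Hölder-continuous Hessian; differentiating the equation in each coordinate direction shows $\partial_e w$ solves a linear uniformly elliptic equation whose coefficients are $C^{\gamma}$, so Schauder estimates give $\partial_e w \in C^{2,\gamma}$, i.e. $w \in C^{3,\gamma}$, and iterating yields $w \in C^\infty$ in a neighborhood of $0$, completing the proof of \autoref{C2alphaforkrylov}.
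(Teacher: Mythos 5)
Your proposal is correct and follows the paper's approach exactly: the paper's proof of this theorem amounts to citing the Krylov analogues of \autoref{punctuallysecondincone}, \autoref{inversemappingsigma2}, and \autoref{blowupFsavinasd} developed in this section and declaring that the proof of \autoref{C2alpha} then carries over verbatim. You additionally make explicit the ellipticity verification via \autoref{elliptickrylov} on a compact subset of the interior of $\Gamma_{k-1}^+$ and the final Schauder bootstrap from $C^{2,\gamma}$ to $C^\infty$, both of which the paper leaves implicit.
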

Combining \autoref{C2alphaforkrylov} and \autoref{2convexity}, we immediately conclude \autoref{maintheorempdesidekrylov}. 
\begin{corollary}\label{maintheorempdesidekrylov}
	Let $(B_1,g)$ be a Euclidean domain with the canonical Euclidean metric.
	Let $\alpha, \alpha_l \in C^\infty(B_1)$ and $\alpha_l > \epsilon > 0$ for all $l=0,\cdots, k-2$.
	 Let $w$ be a viscosity solution to \eqref{PDEformofKrylov} on $(B_1,g)$ and a viscosity subsolution to \eqref{strictlyinsigmakcone} for some $C>0$. If $w$ is locally Lipschitz and $k-1>n/2$, then there is a closed  zero measure set $S \subset B_1$ such that $w\in C^{\infty}(B_1 \setminus S)$.
\end{corollary}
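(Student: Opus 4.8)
The plan is to run the same two-step scheme used for \autoref{themaintheorem}: first show that $w$ is punctually second order differentiable almost everywhere in $B_1$, then invoke \autoref{C2alphaforkrylov} to promote punctual second order differentiability at a point to smoothness on a neighborhood, after which closedness and nullity of the singular set follow formally. Since $(B_1,g)$ carries the canonical Euclidean metric, in local coordinates $W_{ij}=\delta_{ij}$ and $L$ is given by \eqref{Lforschouten} with $g_{ij}=\delta_{ij}$, so $L_0=L_1=0$ and $L_2(x,p)=-p\otimes p+\tfrac12\abs{p}^2 I$; in particular $w$ being a viscosity subsolution to \eqref{strictlyinsigmakcone} is exactly the subsolution property for an equation of the form \eqref{moregeneralL} with $f=\sigma_{k-1}^{1/(k-1)}$, $\Gamma=\Gamma_{k-1}^+$ and $W=\delta_{ij}$.

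The first step is the argument of \autoref{2convexity}, which only uses the subsolution test. Fix a compact $K\Subset B_1$ and let $\Lambda_K$ be a Lipschitz constant for $w$ on $K$. For $x_0$ in the interior of $K$ and any paraboloid $\varphi$ with $\varphi(x_0)=w(x_0)$ and $\varphi\geq w$ near $x_0$, the vector $\D\varphi(x_0)$ is a supergradient of $w$ at $x_0$, so $\abs{\D\varphi(x_0)}\leq\Lambda_K$ and hence $L(x_0,\D\varphi(x_0))\leq 2C\,I$ in the matrix sense for a constant $C=C(\Lambda_K)$. By the viscosity subsolution property for \eqref{strictlyinsigmakcone} we get $\lambda\bigl(\D^2\varphi(x_0)+L(x_0,\D\varphi(x_0))\bigr)\in\Gamma_{k-1}^+$, and then \eqref{conecondition} applied to $\Gamma_{k-1}^+$ gives $\lambda\bigl(\D^2(\varphi+C\abs{x}^2)(x_0)\bigr)\in\Gamma_{k-1}^+$. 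Since $x_0$ is arbitrary, $w+C\abs{x}^2$ is $(k-1)$-convex in the viscosity sense on the interior of $K$; as $k-1>n/2$, \autoref{kconvexalexdandrofftype} shows $w+C\abs{x}^2$, hence $w$, is punctually second order differentiable almost everywhere in $K$. Exhausting $B_1$ by such $K$ (using only local Lipschitz regularity) yields punctual second order differentiability almost everywhere in $B_1$.

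For the second step, apply \autoref{C2alphaforkrylov}: at each point $x_0$ at which $w$ is punctually second order differentiable, $w$ is smooth in a neighborhood of $x_0$. Since a function that is $C^\infty$ near a point is automatically punctually second order differentiable there, the set $U\subset B_1$ of points at which $w$ is punctually second order differentiable coincides with the set of points possessing a neighborhood on which $w\in C^\infty$, so $U$ is open. Setting $S:=B_1\setminus U$, the set $S$ is closed, $w\in C^\infty(B_1\setminus S)$, and $\abs{S}=0$ by the first step.

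As the proof is merely a combination of \autoref{2convexity} and \autoref{C2alphaforkrylov}, I do not expect a genuine obstacle. The two points requiring attention are: (i) verifying that the proof of \autoref{2convexity} uses nothing beyond the subsolution test against paraboloids lying above $w$ (so that it applies to a mere subsolution of \eqref{strictlyinsigmakcone}) and that the constant $C$ in $w+C\abs{x}^2$ can be chosen locally uniformly from the Lipschitz bound, which forces one to work on compact subsets rather than on all of $B_1$; and (ii) noting that closedness of $S$ genuinely requires the full strength of \autoref{C2alphaforkrylov} — smoothness on a whole neighborhood, not merely differentiability at the single point — in order to conclude that $U$ is open.
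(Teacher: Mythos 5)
Your proof is correct and takes the same route as the paper, which proves the corollary in a single line by combining \autoref{2convexity} (applied with $k-1$ in place of $k$, using that $W=\delta_{ij}$ in the Euclidean case and that only the subsolution test against paraboloids from above is needed) with \autoref{C2alphaforkrylov}. You have additionally spelled out the two details the paper glosses over — localizing the Lipschitz constant to compact subsets and deriving closedness of $S$ from openness of the set where $w$ is smooth — both of which are handled correctly.
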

\section{Data availability statement}
No data were generated or analyzed during this study.

\printbibliography


\end{document}